\DeclareMathAlphabet{\mathcalligra}{T1}{calligra}{m}{n}
\numberwithin{equation}{section}
\newcommand{\lrg}[1]{#1}
\newcommand{\adz}[1]{#1}
\newcommand{\hide}[1]{}
\newcommand{\ph}{\varphi}
\renewcommand{\phi}{\ph}
\newcommand{\cyl}{\operatorname{cyl}}
\newcommand{\eucl}{\operatorname{eucl}}
\newcommand{\hyp}{\operatorname{hyp}}
\newcommand{\mcyl}{\operatorname{oc}}
\newcommand{\sph}{\operatorname{sph}}
\newcommand{\modd}[1]{|{\rm d}#1|}
\newcommand{\Deriv}{{\rm D}}
\newcommand{\prioritize}[1]{\left(#1\right)}
\newcommand{\setof}[1]{\left\{#1\right\}}
\newcommand{\annulus}{A}
\newcommand{\bigo}{O}
\newcommand{\classb}{\mathcal{B}}
\newcommand{\closedinterval}[1]{[#1]}
\newcommand{\closedopeninterval}[1]{[#1[}
\newcommand{\complexnumbers}{\C}
\newcommand{\composedwith}{{\circ}}
\newcommand{\contains}{\supset}
\renewcommand{\diameter}{\operatorname{diam}}
\newcommand{\disk}{D}
\newcommand{\distance}{\operatorname{dist}}
\newcommand{\family}[1]{\left(#1\right)}
\newcommand{\familyproduct}[2]{\overset{#2}{\underset{#1}{\prod}}}
\newcommand{\familysum}[2]{\overset{#2}{\underset{#1}{\sum}}}
\newcommand{\familyunion}[2]{\overset{#2}{\underset{#1}{\bigcup}}}
\newcommand{\hausdorffdimension}{\dim}
\newcommand{\imaginarypart}{\operatorname{Im}}
\newcommand{\integerpart}[1]{\left\lfloor#1\right\rfloor}
\newcommand{\integers}{{\mathbb{Z}}}
\newcommand{\iterated}{}
\newcommand{\logarithmicmeasure}{\operatorname{lm}}
\newcommand{\maximummodulus}{M}
\newcommand{\naturalnumbers}{{\mathbb{N}}}
\newcommand{\nonzerocomplexnumbers}{{\withoutzero{{\complexnumbers}}}}
\newcommand{\numberofelements}{\#}
\newcommand{\openclosedinterval}[1]{]#1]}
\newcommand{\openinterval}[1]{]#1[}
\newcommand{\positiverealnumbers}{{{{\realnumbers_{>0}}}}} 
\newcommand{\realnumbers}{{\mathbb{R}}}
\newcommand{\realpart}{\operatorname{Re}}
\newcommand{\restricted}{|}
\newcommand{\sequence}[1]{\family{#1}}
\newcommand{\setcomplement}{{\backslash}}
\newcommand{\setintersection}{{\cap}}
\newcommand{\setunion}{{\cup}}
\newcommand{\sqrtmone}{{\squarerootofminusone}}
\newcommand{\squarerootofminusone}{i}
\newcommand{\tendsto}{\to}
\newcommand{\twopii}{2\pi\sqrtmone}
\newcommand{\withoutzero}[1]{#1^{{*}}}
\newcommand{\partitionfunction}{Z}
\theoremstyle{changebreak}
\newtheorem{thm}{Theorem}[section]
\newtheorem{lem}[thm]{Lemma}
\newtheorem{obs}[thm]{Observation}
\newtheorem{prop}[thm]{Proposition}
\newtheorem{cor}[thm]{Corollary}
\theoremstyle{defnbreak}
\newtheorem{defn}[thm]{Definition}
\newtheorem{rem}[thm]{Remark}
\newtheorem{rems}[thm]{Remarks}
\newcommand{\Iterated}[1]{#1}
\newcommand{\Disk}{D}
\newcommand{\CoverFamily}{\mathcal{Q}}
\newcommand{\DimensionParameter}{t}
\newcommand{\MainMapping}{f}
\newcommand{\PoincareFunctionPolynomial}{P}
\newcommand{\PoincareFunctionPolynomialDegree}{d}
\newcommand{\RescaledMainMapping}{{\MainMapping}}
\newcommand{\RescalingParameter}{{\lambda}}
\newcommand{\fatouset}{F}
\newcommand{\filledjuliaset}{K}
\newcommand{\hyperbolicdimension}{\operatorname{dim}_{\operatorname{hyp}}}
\newcommand{\juliaset}{J}
\newcommand{\radialjuliaset}{{\juliaset}_{\operatorname{r}}}
\newcommand{\eventualdimension}{\operatorname{edim}}
\newcommand{\eventualhyperbolicdimension}{\operatorname{edim}_{\hyp}}
\newcommand{\evhypdim}{\eventualhyperbolicdimension}
\newcommand{\vanishingexponent}{\theta}
\newcommand{\fixedpoint}{\xi_0}
\newcommand{\multiplier}{{\rho}}
\newcommand{\poincarefunction}{L}
\newcommand{\poincarefunctioninverse}{{\ell}}
\newcommand{\poincarefunctionlevelset}{E}
\newcommand{\postsingularset}{\operatorname{PS}}
\newcommand{\postsingular}{\postsingularset}
\newcommand{\singularset}{S}
\newcommand{\topologicalpressure}{{\mathcal{P}}}
\newcommand{\poincarelevset}{{\poincarefunctionlevelset}}
\newcommand{\lbAnnidx}{k}
\newcommand{\lbAnnlevel}{n}
\newcommand{\lbAnnmod}{m}
\newcommand{\lbAnnulus}{\mathcal{A}}
\newcommand{\lbAnnulusidx}{\lbAnnidx}
\newcommand{\lbCoverit}{k_0}
\newcommand{\lbFundradius}{{R_{f}}}
\newcommand{\lbIfsit}{\nu}
\newcommand{\lbIfsitact}{p}
\newcommand{\lbIfsittransit}{\nu^+}
\newcommand{\lbLargemod}{M}
\newcommand{\lbLargerann}{{\lbAnnulus'}}
\newcommand{\lbPreimginifs}{\omega}
\newcommand{\lbPtinsec}{y}
\newcommand{\lbRadseq}{r}
\newcommand{\lbSector}{S}
\newcommand{\lbSlit}{W}
\newcommand{\lbSlitAnnulus}{\lbAnnulus^{slit}}
\newcommand{\lbSquare}{Q}
\newcommand{\lbSquareimage}{{\lbSquare'}}
\newcommand{\lbWVcentralidx}{N}
\newcommand{\lbWVerror}{\eps}
\newcommand{\lbWVexceptions}{E}
\newcommand{\lbWVpoint}{\xi}
\newcommand{\lbWVpointseq}{\xi}
\newcommand{\ifsdomain}{D}
\newcommand{\ifsname}{\chi}
\newcommand{\ifsrange}{\ifsdomain}
\newcommand{\partfun}{\partitionfunction}
\newcommand{\cylpartfun}{\partitionfunction^{\cyl}}
\newcommand{\cylnorm}[1]{\left\|#1\right\|_{\cyl}}
\newcommand{\cyldiam}{\diameter_{\cyl}}
\newcommand{\mcylpartfun}{\partitionfunction^{\mcyl}}
\newcommand{\mcylnorm}[1]{\left\|#1\right\|_{\mcyl}}
\newcommand{\euclideanpartitionfunction}{\partitionfunction^{\eucl}}
\newcommand{\euclpartfun}{\euclideanpartitionfunction}
\newcommand{\sphericalnorm}[1]{\left\|#1\right\|_{\sph}}
\newcommand{\intersection}{\setintersection}
\newcommand{\hypdim}{\hyperbolicdimension}
\newcommand{\p}{\prioritize}
\newcommand{\union}{\setunion}
\title[Hyperbolic dimension and Poincar{\'e} functions]{Eventual hyperbolic dimension \\
 of entire functions \\
and Poincar{\'e} functions of polynomials}
\author{Alexandre DeZotti}
\address{Dept. of Mathematical Sciences, University of Liverpool, Liverpool L69 7ZL, UK.
ORCiD: 0000-0002-6463-3304. MR Author ID: 906429.}
\email{adezotti@liverpool.ac.uk}
\author{Lasse Rempe-Gillen}
\address{Dept.\ of Mathematical Sciences, University of Liverpool, Liverpool L69 7ZL, UK. 
{ORCiD: 0000-0001-8032-8580; MR Author ID: 738017.}}
\email{l.rempe@liverpool.ac.uk}
\thanks{The second author was supported by a Philip Leverhulme Prize.}
\begin{document}

\begin{abstract}
  Let $P\colon\C\to\C$ be an entire function.
    A \emph{Poincar\'e function} $L\colon\C\to\C$ of $P$ is the entire extension of a linearising 
    coordinate near a repelling fixed point of $P$. We propose such Poincar\'e functions 
    as a rich and natural class of dynamical systems from the point of view of measurable dynamics, showing
    that the measurable dynamics of $P$ influences that of $L$. More precisely, the \emph{hyperbolic dimension}
    $\hyperbolicdimension(P)$ is a lower bound for $\hyperbolicdimension(L)$.
    
Our results 
allow us to describe a large collection of hyperbolic entire functions having full hyperbolic dimension, and hence
    no natural invariant measures. (The existence of such examples was only
         recently established, using very different and much less direct methods.) 
      We also give a negative answer to a natural question concerning the behaviour of 
        \emph{eventual dimensions} under quasiconformal equivalence. 
\end{abstract}%
\maketitle%

\section{Introduction}
 This article is related to the \emph{measurable dynamics} of 
  transcendental entire functions of one complex variable.
  The goal of measurable dynamics is to understand the statistically typical 
  behaviour of a dynamical system, which 
  means finding natural invariant measures that describe the average 
  behaviour of the system in question. 
  In the case of functions of one real variable, the desired measures will often be  
    absolutely continuous with respect to Lebesgue measure so that they represent ``typical behaviour'' in the usual sense;
   compare e.g.~\cite{Lyubich2000}. For
    \emph{complex} one-dimensional systems, this approach must be modified. Indeed, 
     the locus where the interesting dynamics of a complex polynomial $f$ takes place, known  as the \emph{Julia set} $J(f)$, 
    frequently has zero Lebesgue measure and even Hausdorff dimension less than two. 
    Sullivan \cite{Sullivan1982,Sullivan1983} proposed the solution of finding natural geometric measures
    known  as \emph{conformal  measures} 
    to replace Lebesgue measure, and then
    constructing invariant measures that are absolutely continuous with respect to these.

 In the following decades, a very clear picture of this theory has emerged for rational functions.
    In order to provide meaningful information about the dynamics,
    a conformal measure should be supported on the locus of non-uniform hyperbolicity, called the 
    \emph{radial Julia set} $\radialjuliaset(f)$ (Definition~\ref{defn: radial julia set}).
    \emph{Topological Collet-Eckmann maps} (Definition~\ref{defn:TCE}) provide a large class of functions
    for which $\hausdorffdimension(\radialjuliaset(f))= \hausdorffdimension(J(f))$ \cite{Przytycki1998}, 
    and for which the desired conformal measures and invariant measures exist \cite{PrzytyckiRiveraLetelier2007}.
     (Here  $\hausdorffdimension$ denotes
     Hausdorff dimension.)
     Indeed, if one requires additional mixing properties for the invariant measure,
     then their existence becomes equivalent to the topological Collet-Eckmann
      condition; it is plausible that such maps have full measure in the space of rational functions or polynomials of a given degree. 
     Remarkably, they include all \emph{hyperbolic} functions. 

  More recently, the dynamics of transcendental entire functions $f\colon\C\to\C$ has received considerable attention; significant difficulties arise from the
     non-compactness of the phase space and the nature of the transcendental singularity at $\infty$. Early results concerning the 
     measurable dynamics of such a function were mainly negative and highlighted differences to the rational case, such as the existence of hyperbolic 
     examples whose Julia 
     sets have empty interior and positive measure \cite{McMullen1987} or the fact that 
      the complex exponential map is not recurrent \cite{Lyubich1986,Rees1986} and indeed not
      ergodic \cite{Lyubich1987}. These examples suggested that the measurable theory as developed in the rational case breaks down completely
      for transcendental entire functions.
   However,
      subsequently it was realised that the radial Julia set  of such a function
  often has strictly smaller dimension than the full Julia set~--~a phenomenon unknown in rational dynamics until very recently, see below.
  Stallard~\cite[Theorem~C]{Stallard1999} was 
     the first to notice that this may occur; in a breakthrough 
         Urba\'nski and Zdunik later observed that this holds for every hyperbolic exponential map $f$ \cite{UrbanskiZdunik2003} and, crucially,
        constructed conformal and
         invariant measures supported on $\radialjuliaset(f)$. (See Definition~\ref{def: hyperbolic function} for the definition of hyperbolic
        entire functions.)  This raised the question of whether there might not be 
        a general theory of measurable transcendental dynamics after all,
        \lrg{in analogy to} in the rational case. 
       This problem is not only interesting in its own right, but receives additional relevance through recently announced results
        of Avila and Lyubich \cite{Man-AvilaLyubich2015},
        who gave the first examples of rational maps 
       for which the radial Julia set has smaller Hausdorff dimension than the full Julia set. The similarity between this result and the 
       above-mentioned 
       transcendental phenomena suggests that a good understanding of measurable dynamics for transcendental functions will also lead to insights 
       into the most intricate aspects of the polynomial theory.

   Since the seminal work of Urba{\'n}ski and Zdunik, there has been much work on generalising and extending their results. 
    We refer to the survey
       \cite{KotusUrbanski2008} for background and results, and mention here only 
       work by Mayer and Urba{\'n}ski \cite{MayerUrbanski2008}, 
       who treat a large class of hyperbolic transcendental meromorphic functions satisfying a certain strong regularity condition
       (see below~-- this condition is satisfied, in particular, for many maps given by explicit formulae, 
       such as exponential and trigonometric functions). 
        It is natural to ask whether these results might extend to \emph{all} hyperbolic entire functions, or at least those 
      having finite order of growth.
        It turns out that this is not the case: 
        there is a finite-order hyperbolic entire function $f$ for which 
         $\radialjuliaset(f)$ has Hausdorff dimension $2$ and therefore cannot support a conformal measure
         \cite{RempeGillen2014}. 

   Despite much progress, the area suffered somewhat from a lack of good examples
    beyond the explicit families covered by \cite{MayerUrbanski2008}. 
     (We remark that the function in~\cite{RempeGillen2014} is constructed in a very non-explicit and rather artificial manner.) 
     We propose to address this issue by studying the geometric properties of \emph{Poincar{\'e} functions} of polynomials. 
     In particular,
       we show (Corollary~\ref{cor:linearisers with evhypdim 2}) that there is
       a large collection of hyperbolic entire functions $ f $ of finite order satisfying
       $ \hausdorffdimension \p{ \radialjuliaset (f) } = 2 $. This provides 
       an alternative (and simpler) proof of the main result of \cite{RempeGillen2014}. 
      We also answer a natural question arising from work of Stallard and the second author \cite{RempeStallard2010}
                (Corollary~\ref{cor:noninvariance of evhypdim inside qc classes}).

\subsection*{Poincar\'e functions}
  Let $f$ be an entire function,
   and let $\fixedpoint \in \complexnumbers$ be a fixed point of $f$.
   Suppose that this fixed point is repelling; i.e., its \emph{multiplier} $\multiplier = f'(\fixedpoint )$ has modulus greater than $1$. 
   Then, by a classical theorem of Koenigs (see e.g.~\cite[Section~6.3]{Book-Beardon1991}),
   there exists a conformal map $\poincarefunction$, defined near zero, 
   such that $\poincarefunction(0) = \fixedpoint$ and $\poincarefunction$ satisfies    \begin{equation}\label{eq:schroederequation}
   \poincarefunction \left( \multiplier z \right) = f \left( \poincarefunction (z) \right).
   \end{equation}
    Using the functional equation~\eqref{eq:schroederequation}, this linearising function $\poincarefunction$ extends to 
    an entire function $\poincarefunction\colon \C\to\C$ satisfying~\eqref{eq:schroederequation} everywhere, which 
     is called a \emph{Poincar{\'e} function} for~$f$. 

  The functional relation~\eqref{eq:schroederequation} leads to close connections between the dynamics of $f$ and the function-theoretic properties of
    $\poincarefunction$. (As pointed out by Eremenko and Sodin\cite{EremenkoSodin1990}, this idea can be found already in the work 
    of Julia, Fatou and Latt\`es). Furthermore, the function $\poincarefunction$ can be effectively computed using the functional relation; hence it is
    amenable to computer experiments. As far as we are aware, the idea of studying the \emph{dynamics} of Poincar\'e functions~-- as examples that
    are tangible yet quite different from the usually studied families~-- was first proposed
    by Epstein in the 1990s; see \cite{MihaljevicBrandtPeter2012} for another instance of this approach. 
    As we shall show,
    the measurable dynamics of the function $f$ leaves an imprint on the dynamics of $\poincarefunction$ \emph{near infinity},
    making these maps excellent test cases for the above-mentioned questions.
    (Such a connection was conjectured in \cite[Section~7]{EpsteinRempeGillen2015}.)
    To make the preceding statement precise, we shall change
    our point of view slightly, from non-uniformly to \emph{uniformly} hyperbolic behaviour.

 \subsection*{Hyperbolic sets and (eventual) hyperbolic dimension}
    A \emph{hyperbolic set} $K\subset\C$ of an entire function $f$ is a compact and forward-invariant set on which the function is
     uniformly expanding; i.e., $ \abs{ \p{ f^{ \iterated n } }' (z) } \geq \lambda $
     for some $ n \geq 1 $, some $ \lambda > 1 $, and all $ z \in K $.
     The \emph{hyperbolic dimension} of $ f $ measures the size of 
     such uniformly hyperbolic behaviour in geometric terms:
     \begin {equation}
      \hyperbolicdimension (f)
       \defeq
      \sup \setof{ \hausdorffdimension (K) \colon \text{$ K $ is a hyperbolic set for $ f $} }.
     \end{equation}
     It is known that always $ \hyperbolicdimension (f) = \hausdorffdimension \p{ \radialjuliaset (f) } $ \cite{Rempe2009a}; hence this quantity plays an
     important role in measurable dynamics. 
     As mentioned above, the difficulty in studying transcendental functions (when compared with rational ones) arises from the 
   presence of a transcendental singularity at infinity. The following
    notion measures the 
      ``limiting'' properties of hyperbolic dimension as one approaches this singularity.

\begin{defn}[Eventual hyperbolic dimension]\label{defn:eventualhyperbolicdimension}
  Let $f$ be an entire function. The \emph{eventual hyperbolic dimension} of the mapping
  $f$, denoted by $\eventualhyperbolicdimension (f)$, is
    \[
    \eventualhyperbolicdimension (f) \defeq \lim_{R \tendsto \infty} \sup \left\{
    \hausdorffdimension (K) \colon \text{$K$ is a hyperbolic set for $f$ with $\inf_{z\in K}|z| \geq R$} \right\} . \]
\end{defn}
 We will mainly be interested in the case of entire functions $f$ that belong to the \emph{Eremenko-Lyubich class} $\classb$ of functions 
    having a bounded set of critical and asymptotic values. In this case Bara\'nski, Karpi\'nska and Zdunik \cite{BaranskiKarpinskaZdunik2009} proved that always
    $\hyperbolicdimension(f)>1$; their proof shows also that 
     $\eventualhyperbolicdimension(f)\geq 1$.

 The notion of eventual hyperbolic dimension plays a role~-- albeit implicitly~-- 
  already in the work of
    Urba\'nski and Zdunik on exponential maps. Indeed, a key step in
    their construction of conformal measures is to show that that certain measures 
    do not give
    much mass to points close to the essential singularity. Their arguments imply that,
    for exponential maps,     
    $ \eventualhyperbolicdimension (f) = 1 $. In essence, this is what allows 
    one to disregard behaviour near infinity.     
   More generally, any class $ \classb $ entire function covered by the results of Mayer and Urba\'nski also satisfies $\eventualhyperbolicdimension(f)=1$
    (see Proposition \ref{prop:in mu class evhypdim is 1}). 

 \subsection*{Measurable dynamics of linearisers} 
   The following result relates the eventual hyperbolic dimension of a Poincar\'e function
    to the hyperbolic dimension of the original function.
   
  \begin{thm}[Linearisers with $\hyperbolicdimension <2$]  \label{thm:evhypdim of linearisers}
   Let $ P \colon \C \to \C $ be a polynomial of degree at least $2$ with connected Julia set satisfying the topological Collet-Eckmann condition,
   and let $ \poincarefunction $ be a Poincar\'e function associated to a repelling fixed point of $P$.
    Then 
         \[ 1 \leq \hyperbolicdimension (P) = 
            \eventualhyperbolicdimension (\poincarefunction) 
            < 2. \]

   In particular, if $P$ is not 
     conformally conjugate to a power map or a Chebyshev polynomial\footnote{%
  If 
     $ P $ is conformally conjugate to a power map or a Chebyshev polynomial, 
     then $ \poincarefunction $ agrees with the exponential or the cosine function, respectively, up to 
     pre- and post-composition with affine maps.}, then 
     $ \eventualhyperbolicdimension (\poincarefunction) > 1 $.
     
     If additionally $\poincarefunction$ is \emph{of disjoint type}; that is, 
       hyperbolic with connected Fatou set, then 
    $\eventualhyperbolicdimension(\poincarefunction)\leq \hyperbolicdimension(\poincarefunction)<2$. 
  \end{thm}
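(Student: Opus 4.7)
The plan is to decompose the theorem into the bounds $1 \leq \hypdim(P) < 2$, the central equality $\hypdim(P) = \evhypdim(\poincarefunction)$, the strict lower bound $\evhypdim(\poincarefunction) > 1$ outside the power/Chebyshev exceptions, and the disjoint-type conclusion. For the two easy bounds I would invoke standard results: on one hand, $\juliaset(P)$ contains continua (it separates the plane for any degree $\geq 2$ polynomial with connected Julia set), giving $\hausdorffdimension \juliaset(P) \geq 1$, and Przytycki's theorem under TCE identifies $\hypdim(P) = \hausdorffdimension \juliaset(P)$; on the other, Przytycki--Rivera-Letelier show that TCE implies $\hausdorffdimension \juliaset(P) < 2$ whenever $\juliaset(P) \neq \riemannsphere$, automatic for polynomials.

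The core is the equality $\hypdim(P) = \evhypdim(\poincarefunction)$, which I would prove using the Schr\"oder relation $\poincarefunction(\multiplier z) = P(\poincarefunction(z))$ as a dictionary between $P$-dynamics on $\complexnumbers$ and $\poincarefunction$-dynamics near infinity. In the direction $\evhypdim(\poincarefunction) \geq \hypdim(P)$, I start from a hyperbolic set $K_P$ of some iterate $P^m$ whose Hausdorff dimension is close to $\hypdim(P)$. By topological transitivity of $P$ on $\juliaset(P)$, pulling $K_P$ through a suitable inverse branch of some $P^k$ places it inside a small Koenigs disk $D$ around $\fixedpoint$ without changing Hausdorff dimension (Koebe distortion). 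The Koenigs coordinate $\phi_0 = \poincarefunction^{-1}|_D$ conjugates $P|_D$ to multiplication by $\multiplier$ near $0$, so $\tilde{K} := \phi_0(K_P)$ is a conformal copy of $K_P$ near the origin of the same Hausdorff dimension. The scaled copies $\multiplier^n \tilde{K}$ lie arbitrarily close to infinity; to assemble them into a $\poincarefunction$-forward-invariant, uniformly expanding compact set I would employ an iterated function system of inverse branches of $\poincarefunction$ near infinity: each scaled annulus $\multiplier^n \phi_0(D)$ admits an inverse branch of $\poincarefunction$ whose image sits inside $\multiplier^{n+1} \phi_0(D)$, and the resulting Cantor-like limit set has Hausdorff dimension $\hausdorffdimension K_P$ by a Moran-type computation.

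For the reverse inequality $\evhypdim(\poincarefunction) \leq \hypdim(P)$, I would start with a hyperbolic set $K_L \subset \setof{|z| \geq R}$ of $\poincarefunction$ and factor each $z \in K_L$ as $z = \multiplier^{n(z)} z'$ with $z'$ in the fundamental annulus $\setof{1 \leq |w| < |\multiplier|}$. The identity $\poincarefunction(z) = P^{n(z)}(\poincarefunction(z'))$ combined with forward invariance $\poincarefunction(K_L) \subset K_L$ forces $\poincarefunction(z')$ to lie in a compact hyperbolic set for some iterate of $P$, whose Hausdorff dimension is then at least $\hausdorffdimension K_L$ up to controlled Koebe distortion along the linear rescalings $\multiplier^{-n}$. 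Supping over $K_L$ and letting $R \to \infty$ delivers the bound.

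The remaining assertions are shorter. If $P$ is not conformally conjugate to a power map or a Chebyshev polynomial, Zdunik's theorem yields $\hausdorffdimension \juliaset(P) > 1$, and the equality just proved then gives $\evhypdim(\poincarefunction) > 1$. When $\poincarefunction$ is of disjoint type, the inequality $\evhypdim(\poincarefunction) \leq \hypdim(\poincarefunction)$ is immediate from the definitions (the former is a supremum over a strictly smaller family of hyperbolic sets), and $\hypdim(\poincarefunction) < 2$ is the known fact that disjoint-type entire functions in class $\classb$ of finite order have hyperbolic dimension strictly below two, established via the logarithmic change of variable of Eremenko--Lyubich. I expect the main obstacle to lie in the IFS construction in the $\geq$ direction: the naive copies $\multiplier^n \tilde{K}$ are \emph{not} $\poincarefunction$-forward-invariant, since $\poincarefunction$ sends each $\multiplier^n \tilde{K}$ back into $K_P \subset D$, near $\fixedpoint$ rather than near infinity. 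Converting this return pattern into a genuine hyperbolic set of $\poincarefunction$ near infinity requires arranging inverse branches so that each iteration passes to a strictly larger scale, with distortion estimates guaranteeing preservation of the full Hausdorff dimension $\hausdorffdimension K_P$.
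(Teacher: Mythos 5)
Your outer bounds ($1\le\hypdim(P)<2$ via TCE and Zdunik, and $\evhypdim(\poincarefunction)\le\hypdim(\poincarefunction)$ by definition) are fine, but both halves of the central equality have genuine gaps. For the direction $\evhypdim(\poincarefunction)\le\hypdim(P)$, your factoring argument does not work: if $K_L$ is a hyperbolic set of $\poincarefunction$ near infinity and $z=\multiplier^{n}z'$ with $z'$ in the fundamental annulus, the point $\zeta=\poincarefunction(z')$ has a $P$-orbit that passes through the huge-modulus point $P^{n}(\zeta)=\poincarefunction(z)\in K_L$ and then escapes to infinity under $P$; forward invariance of $K_L$ under $\poincarefunction$ does \emph{not} produce any compact $P$-invariant (let alone hyperbolic) set containing the points $\zeta$, so there is no set of $P$ whose dimension you can compare with $\hausdorffdimension K_L$. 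A telltale sign is that your argument never uses TCE, which is exactly what this inequality needs: the paper (Lemma~\ref{lem: evhypdim L leq hypdim P for TCE J connected P}) proves it by showing that for $t>\hypdim(P)$ the cylindrical partition function $\sum_{z\in\poincarefunction^{-1}(w)}\bigl(|w|/(|z|\,|\poincarefunction'(z)|)\bigr)^{t}$ tends to $0$ as $w\to\infty$, using the preimage structure of Observation~\ref{obs:preimage structure} (preimages of $w$ correspond to $P^{-n}(w)\cap A_P$ with $n\ge\nu(w)\to\infty$), negativity of the pressure at exponent $t$ (this is TCE), and the asymptotics $(P^{k})'(\zeta)\approx d^{k}\zeta^{d^{k}-1}$ in the basin of infinity; the dimension bound then follows from the general covering argument of Lemma~\ref{lem:evhypdim leq vanishing exp}. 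For the direction $\evhypdim(\poincarefunction)\ge\hypdim(P)$ you have correctly identified the obstruction (the scaled copies $\multiplier^{n}\tilde K$ are not forward invariant), but the proposed fix --- ``pass to a strictly larger scale at each step and conclude by a Moran-type computation that the limit set has dimension $\hausdorffdimension K_P$'' --- is not a proof and is not how the estimate can go: a hyperbolic set of $\poincarefunction$ inside a far annulus corresponds to long $P$-orbit blocks that linger near $\juliaset(P)$ and then travel out through the basin of infinity back to the scale of the annulus, and the dimension bound must come from a lower bound on the partition function of this induced system (where the factor $|w|^{t}$ of the cylindrical metric compensates the huge derivative of the escaping stretch), not from preserving the dimension of one fixed set $K_P$. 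This is precisely the content of Theorem~\ref{thm:evhypdim ge hypdim}, whose proof needs the local partition-function growth lemma with prescribed return times (Lemma~\ref{lem:lower exp growth}, proved by a combinatorial/Stirling argument), the Wiman--Valiron construction of sectors and annuli (Lemma~\ref{lem:wiman-valiron-eremenko construction}), and the Eremenko--Lyubich estimate~\eqref{eqn:eremenkolyubichestimate} to guarantee hyperbolicity; none of this is recoverable from the Koenigs-coordinate transport alone.

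The final step is also wrong as stated: there is no ``known fact'' that a disjoint-type, finite-order function in $\classb$ has hyperbolic dimension strictly below two --- indeed the paper itself refutes it, since Corollary~\ref{cor:linearisers with evhypdim 2} together with affine invariance of $\evhypdim$ (Theorem~\ref{thm:affine equiv same evhypdim}) and rescaling $\lambda\poincarefunction$ to disjoint type produces disjoint-type, finite-order class-$\classb$ functions with $\hypdim=2$. The correct argument is Corollary~\ref{cor:finitess exponent and hypdim}: one first needs $\vanishingexponent(\poincarefunction)\le\hypdim(P)<2$ (again the partition-function estimate, i.e.\ TCE), then uses the quasiconformal conjugacy near the Julia sets between the disjoint-type maps $\poincarefunction$ and $\lambda\poincarefunction$ from \cite{Rempe2009}, and the Gehring--V\"ais\"al\"a dimension distortion bound, to conclude $\hypdim(\poincarefunction)<2$.
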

  
  Observe that the part on the dimension being greater than $ 1 $
  follows from the known fact that $\hyperbolicdimension(P)>1$ for such polynomials \cite{Zdunik1990}.
    This provides many examples of functions having eventual
     hyperbolic dimension strictly greater than one. 
     Some of these functions do not satisfy the conditions of \cite{MayerUrbanski2008},
     and provide an interesting class of examples for the
     further study of measurable transcendental dynamics.

In general we still have some relation between the eventual hyperbolic dimension of a linearizer and the polynomial.
If $ P \colon \C \to \C $ is a polynomial of degree at least $2$
    and  $\poincarefunction$ is a Poincar\'e function associated to a repelling fixed point of $ P $ 
    then
    \begin{equation} \label{eq: comparison evhypdim linearizer and hypdim polynomial}
    \eventualhyperbolicdimension (\poincarefunction) \geq \hyperbolicdimension(P).
    \end{equation}
    This is a consequence of Theorem \ref{thm:Eventual hyperbolic dimension of linearisers II} below.

 On the other hand, by Shishikura's famous results \cite{Shishikura1998}, there is a residual subset of the boundary of the Mandelbrot set
   where all maps have hyperbolic dimension two. Indeed, by \cite[Section 12]{McMullen2000}, this is true in any non-trivial bifurcation locus
   of polynomials (and rational maps). By linearising such functions, we hence obtain many examples of entire functions having hyperbolic dimension two. 

   \begin{cor}[Linearisers with  $\hyperbolicdimension(\poincarefunction)=2$]\label{cor:linearisers with evhypdim 2}
     There exists a residual subset $R$ of the boundary of the Mandelbrot set with the following property. 
     For any $c\in R$,
     the Poincar\'e function $\poincarefunction$ associated to a repelling periodic point of the polynomial
     $z\mapsto z^2 + c$ belongs to class $\classb$, has finite order and
     $\hyperbolicdimension(L)=\eventualhyperbolicdimension(L)=2$.
   \end{cor}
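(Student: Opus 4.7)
The plan is to combine the Shishikura--McMullen residuality statement recalled immediately above the corollary with the lower bound~\eqref{eq: comparison evhypdim linearizer and hypdim polynomial}. Let $R\subset\partial\MandelbrotSet$ be the residual subset on which $\hyperbolicdimension(P_c)=2$, where $P_c(z)=z^2+c$; its existence follows from \cite{Shishikura1998} and \cite[Section~12]{McMullen2000}. Fix $c\in R$. Since $c\in\MandelbrotSet$, the filled Julia set $\filledjuliaset(P_c)$ is compact and connected, and repelling periodic points are dense in $\juliaset(P_c)$, so such a point $\fixedpoint$ exists; write $n\geq 1$ for its period. The Poincar\'e function $\poincarefunction$ associated to $\fixedpoint$ is, by definition, a Poincar\'e function of $P_c^n$ at the repelling fixed point $\fixedpoint$, with multiplier $\multiplier=(P_c^n)'(\fixedpoint)$.

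Two ancillary properties of $\poincarefunction$ must be checked. First, $\poincarefunction$ belongs to class $\classb$: its singular set is contained in the closure of the forward $P_c^n$-orbit of the critical values of $P_c^n$, and because $\juliaset(P_c)$ is connected every critical point of $P_c$ lies in the bounded, forward-invariant set $\filledjuliaset(P_c)$. Second, $\poincarefunction$ has finite order, equal to $\log(\deg P_c^n)/\log|\multiplier|$ by a classical computation from the functional equation~\eqref{eq:schroederequation}. Neither step is new; both are standard facts about Poincar\'e functions of polynomials with connected Julia set.

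With these in hand, apply~\eqref{eq: comparison evhypdim linearizer and hypdim polynomial} to $P_c^n$:
\[
 \eventualhyperbolicdimension(\poincarefunction) \geq \hyperbolicdimension(P_c^n) = \hyperbolicdimension(P_c) = 2,
\]
the middle equality holding because $P_c$ and $P_c^n$ share their Julia set and their hyperbolic sets, hence their hyperbolic dimension. Conversely, enlarging $R$ in Definition~\ref{defn:eventualhyperbolicdimension} only shrinks the family of admissible sets, so $\eventualhyperbolicdimension(\poincarefunction)\leq\hyperbolicdimension(\poincarefunction)$, while $\hyperbolicdimension(\poincarefunction)\leq 2$ is automatic. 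The three quantities therefore all equal $2$, and there is no serious obstacle beyond the imported inputs~\eqref{eq: comparison evhypdim linearizer and hypdim polynomial} and the Shishikura--McMullen residuality result.
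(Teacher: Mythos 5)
Your proposal is correct and follows essentially the same route as the paper: take the residual set from Shishikura/McMullen and combine it with the general lower bound \eqref{eq: comparison evhypdim linearizer and hypdim polynomial} (Theorem~\ref{thm:Eventual hyperbolic dimension of linearisers II}) applied to the iterate $P_c^n$, plus the trivial upper bound $2$, with Proposition~\ref{prop:sing set of linearisers} and the classical order formula supplying the class-$\classb$ and finite-order claims. (Only your side remark that $P_c$ and $P_c^n$ ``share their hyperbolic sets'' is slightly loose, since a hyperbolic set for $P_c^n$ need not be $P_c$-invariant, but you only use the inclusion giving $\hyperbolicdimension(P_c^n)\geq\hyperbolicdimension(P_c)$, which is correct.)
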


 While the set $R$ is residual, there are no known \emph{explicit} examples with hyperbolic dimension equal to two. 
 However, using classical
   Wiman-Valiron theory, we show also that one inequality \eqref{eq: comparison evhypdim linearizer and hypdim polynomial}
   holds more generally for linearisers of any entire function 
   (polynomial or transcendental, and regardless of the connectivity of the Julia set): 

   \begin{thm}[Eventual hyperbolic dimension of linearisers]\label{thm:Eventual hyperbolic dimension of linearisers II}
   Let $f\colon\C\to\C$ be a nonlinear entire function, and let $\poincarefunction$ be a Poincar\'e function associated to a repelling fixed point of $f$. 
    Then \[\eventualhyperbolicdimension (\poincarefunction) \geq \hyperbolicdimension(f). \]
   \end{thm}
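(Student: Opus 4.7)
The plan is to transport the hyperbolic dynamics of $f$ on a near-optimal hyperbolic set $K$ to that of $L$ near infinity, using the functional equation $L(\rho z) = f(L(z))$ as a lifting mechanism and Wiman--Valiron theory for $L$ to secure the uniform expansion and forward invariance needed to exhibit a hyperbolic set. Fix $\epsilon > 0$. Standard thermodynamic formalism (Bowen's formula) applied to a hyperbolic set $K$ for $f$ with $\dim_H K > \hypdim(f) - \epsilon$ yields a conformal iterated function system $\{g_i\}_{i=1}^m$ of inverse branches of iterates $f^{k_i}$, defined on a small disk $V$ centred at a repelling periodic point $w_0 \in K$ of period $p$ (with each $k_i$ divisible by $p$, so $g_i(w_0) = w_0$), with pairwise disjoint contracting images $g_i(V) \subset V$, bounded Koebe distortion, and attractor $\Lambda$ satisfying $\dim_H \Lambda > \hypdim(f) - \epsilon$. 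By perturbing $w_0$ slightly within $K$, I may assume $w_0$ is neither a Picard exceptional value nor a critical value of $L$; given $R > 0$, Picard's theorem then provides $\zeta_0 \in L^{-1}(w_0)$ with $|\zeta_0| > R$, and I let $\phi_0$ be the local univalent inverse branch of $L$ sending $w_0$ to $\zeta_0$, with image $W = \phi_0(V)$.

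Next, I would define the lifted maps $\hat g_i := \phi_0 \circ g_i \circ L|_W$. These are conformal self-maps of $W$ with pairwise disjoint images. A direct chain-rule computation gives $\hat g_i'(\zeta_0) = \phi_0'(w_0)\,g_i'(w_0)\,L'(\zeta_0) = g_i'(w_0)$, since $\phi_0$ inverts $L$ near $w_0$ and $g_i$ fixes $w_0$. Koebe distortion then ensures $|\hat g_i'|$ is comparable to $|g_i'|$ on $W$ with bounded distortion uniform in $\zeta_0$. By the Moran--Bowen formula, the attractor $\hat \Lambda = \phi_0(\Lambda) \subset W \subset \{|z| > R/2\}$ satisfies $\dim_H \hat \Lambda = \dim_H \Lambda > \hypdim(f) - \epsilon$.

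The main obstacle is the final step: extracting from $\hat \Lambda$ a genuine hyperbolic set for $L$ near infinity in the sense of the paper's definition. The difficulty is that $L(\hat \Lambda) = \Lambda$ is bounded and disjoint from $\hat \Lambda$, so $\hat \Lambda$ itself is not $L$-forward-invariant; this is not a deficiency of the dimension count but a structural issue that must be overcome. I would resolve it via Wiman--Valiron theory for $L$: at a Wiman--Valiron point $z_r$ whose modulus is comparable to $|\zeta_0|$, $L$ is closely approximated by a monomial of very high central index $N(r)$, so $|L'(\zeta_0)|$ is of order $N(r)M(r)/|\zeta_0|$ and hence extremely large, yielding strong uniform expansion of $L$ in a neighbourhood of $\hat \Lambda$. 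Combined with the semi-conjugacy identity $L \circ \hat g_{i_1} \circ \cdots \circ \hat g_{i_n} = g_{i_1} \circ \cdots \circ g_{i_n} \circ L$ and, if necessary, passage to a suitable iterate $L^N$ together with a controlled Wiman--Valiron orbital closure, this expansion should yield a compact, forward-invariant, uniformly expanding subset of $\{|z| \geq R\}$ containing an essentially full-dimensional image of $\hat \Lambda$. Letting $\epsilon \to 0$ and $R \to \infty$ then gives $\evhypdim(L) \geq \hypdim(f)$.
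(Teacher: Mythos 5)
The decisive gap is the one you yourself flag and then wave away: forward invariance. By construction $L(\hat\Lambda)=\Lambda$ is a bounded set near $J(f)$, so every $L$-orbit starting in $\hat\Lambda$ leaves $\{|z|\geq R\}$ after one step; since the sets in the definition of $\eventualhyperbolicdimension(L)$ must be compact, forward-invariant under $L$ \emph{and} entirely contained in $\{|z|\geq R\}$, no subset of $\hat\Lambda$ can be (or be completed to) such a set. Passing to an iterate $L^N$ does not help: a hyperbolic set of $L^N$ outside $D(0,R)$ is not forward-invariant for $L$, and saturating it under $L$ reintroduces the bounded intermediate images. Likewise, Wiman--Valiron expansion of $L$ near $\zeta_0$ is beside the point: strong expansion near infinity is already free from the Eremenko--Lyubich estimate once $|L|$ is large, and expansion contributes nothing toward invariance. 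What is actually needed is to close the loop inside the \emph{range} of $L$: for points $w$ in a far annulus $\mathcal{A}$, one must exhibit many preimages of $w$ under a \emph{single} application of $L$ that again lie in $\mathcal{A}$. Via the functional equation this amounts to producing many $f$-orbits which start in a fundamental annulus of the linearisation near $J(f)$, pass through your IFS, and then escape back out to $\mathcal{A}$ in a controlled number of steps with controlled derivative; your construction contains no mechanism for this escape.

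This is precisely where the paper's proof does its real work, and where your sketch stops. The paper applies Wiman--Valiron theory to $f$ (not to $L$), building, after Eremenko, a chain of sectors $S_k$ and annuli $A_k$ with $f(S_k)\supset A_k\cup S_{k+1}$, so that orbits can travel from the IFS region out to $A_k$; the central index bound $N\leq(\log M(r,f))^2$ then shows that the derivative accumulated along this escape is only of order $N_0\cdots N_{k-1}|w|$, polynomial in $n_1\cdots n_k$, hence negligible against the exponential gain $e^{\beta n_k}$ from the IFS pressure (Lemma~\ref{lem:lower exp growth}). Only with this balance is the cylindrical partition function of the induced first-return system of $L$ on a slit far annulus large, giving a genuine hyperbolic set for $L$ outside $D(0,R)$ of dimension at least $t$ for every $t<\hyperbolicdimension(f)$. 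In particular your dimension count (Moran--Bowen for $\hat\Lambda$) does not survive the necessary modification: the dimension of the true invariant set is governed by the pressure of the induced system, in which the escape derivative must be absorbed, not by $\dim\hat\Lambda$. So the final sentence of your proposal (``this expansion should yield a compact, forward-invariant, uniformly expanding subset \ldots containing an essentially full-dimensional image of $\hat\Lambda$'') is not a routine completion but the actual content of the theorem, and it remains unproven in your argument.
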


 In particular, we obtain the following completely explicit example of a hyperbolic entire function having hyperbolic dimension two (albeit of infinite order
    and extremely rapid growth).

  \begin{cor}[Exponential lineariser]\label{cor:exponentiallineariser}
    Consider the function $f\colon \C\to\C; z\mapsto 2\pi i e^z$, and let $L$ be a Poincar\'e function of $f$ at the fixed point $2\pi i$. Then
      $\eventualhyperbolicdimension(L)=2$.

   If furthermore $L$ is normalised such that $|L'(0)| < \frac{1}{20}$, then $L$ is hyperbolic with connected Fatou set. 
  \end{cor}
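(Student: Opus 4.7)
The plan for the first assertion is to apply Theorem~\ref{thm:Eventual hyperbolic dimension of linearisers II} to $f(z)=2\pi i e^z$. A direct check gives $f(2\pi i)=2\pi i\cdot e^{2\pi i}=2\pi i$ and $f'(2\pi i)=2\pi i$, so $2\pi i$ is a repelling fixed point of $f$, the theorem applies, and one obtains $\evhypdim(L)\ge\hypdim(f)$. It then suffices to verify $\hypdim(f)=2$. The unique singular value of $f$ is the asymptotic value $0$, and $f(0)=2\pi i$, so $f$ is a Misiurewicz-type exponential whose postsingular set $\{0,2\pi i\}$ is finite and lands on the repelling fixed point; in particular $J(f)=\C$. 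For such maps one produces hyperbolic subsets of Hausdorff dimension arbitrarily close to $2$ by the standard pull-back (telescoping) construction, using Koebe distortion on the orbits that avoid a small neighbourhood of $\{0,2\pi i\}$. This gives $\hypdim(f)=2$, and together with the trivial bound $\evhypdim(L)\le 2$ yields the first equality.

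For the second assertion the plan is to establish that $L$ is of disjoint type, which for class $\classb$ functions is equivalent to hyperbolicity with connected Fatou set. Since any two Poincar\'e functions of $f$ at $2\pi i$ differ by a linear rescaling of the argument, I fix a reference $L_1$ with $L_1(0)=2\pi i$, $L_1'(0)=1$, and write $L(z)=L_1(cz)$ with $|c|<1/20$. I first check that $L\in\classb$: using the functional equation $L(\rho z)=f(L(z))$ together with the finiteness of the postsingular set $\{0,2\pi i\}$, every asymptotic value of $L$ arises as a limit $f^n(w_n)$ along bounded $f$-orbits starting from a compact set, so it must lie in $\overline{\{0,2\pi i\}}$. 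Consequently $S(L)\subset D(0,R_0)$ for an explicit $R_0$ slightly larger than $2\pi$, and this bound is independent of $c$ because rescaling the argument does not alter the singular values.

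The core is then a quantitative contraction estimate. I pick $R>\max(2\pi,R_0)$; for $|z|\le R$ one has $|cz|\le |c|R$, and Cauchy estimates on $L_1-2\pi i$ near $0$ (where $L_1$ vanishes with derivative $1$) yield a universal constant $M$ with $|L(z)-2\pi i|\le M|c|R$, provided $|c|R$ lies below a fixed radius of control for $L_1$. The triangle inequality then gives $|L(z)|\le 2\pi+M|c|R$, and for $|c|<1/20$ a suitable choice of $R$ (numerically around $R\approx 8$) ensures $2\pi+M|c|R<R$. Hence $L(\overline{D(0,R)})\compactlyembedded D(0,R)$, so $\overline{D(0,R)}$ lies in the immediate basin of a unique attracting fixed point $\xi\in D(0,R)$ of $L$ with multiplier of order $c$; since $S(L)\subset D(0,R)$, the map $L$ is of disjoint type. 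The main obstacle I anticipate is making the constants $R_0$ and $M$ sharp enough that the resulting condition collapses exactly to $|c|<1/20$; both ultimately come from the explicit functional equation and from the boundedness (indeed, finiteness) of the postsingular dynamics of $f$.
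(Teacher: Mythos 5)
Both halves of your argument stall exactly at the quantitative/substantive core, and in each case the paper supplies a specific input that you replace by an assertion. For the first part, the claim $\hyperbolicdimension(f)=2$ for $f(z)=2\pi i e^z$ is the whole content, and it does not follow from $J(f)=\C$ together with a ``standard pull-back (telescoping) construction'': for hyperbolic maps $\lambda e^z$ one also has $\hausdorffdimension(J)=2$, yet Urba\'nski--Zdunik show $\hyperbolicdimension<2$, so no construction of this kind can succeed without using the Misiurewicz structure in an essential way, and you never say why the dimension of your Cantor sets should approach $2$. What is needed (and what the paper does) is a reason why a set of dimension $2$ consists of points returning infinitely often at definite spherical distance from $\postsingular(f)=\{0,2\pi i\}\cup\{\infty\}$: the paper invokes the zero Lebesgue measure of the escaping set (Eremenko--Lyubich), the finiteness of the postsingular set plus the repelling nature of $2\pi i$ to conclude that almost every point lies in $\radialjuliaset(f)$, and then the theorem $\hyperbolicdimension(f)=\hausdorffdimension(\radialjuliaset(f))$ of \cite{Rempe2009a}. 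Your sketch contains none of these inputs, so the key equality $\hyperbolicdimension(f)=2$ is unproved.

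For the second part your strategy (produce $R$ with $S(L)\subset D(0,R)$ and $L(\overline{D(0,R)})\compactlyembedded D(0,R)$, hence disjoint type) is the same as the paper's, but the step you flag as ``the main obstacle'' is precisely the missing proof. A Cauchy estimate on $L_1-2\pi i$ gives a constant $M$ depending on the unknown maximum modulus of $L_1$ on some disc, so the inequality $2\pi+M|c|R<R$ cannot be pushed down to the explicit threshold $|L'(0)|<\tfrac{1}{20}$. The paper obtains the needed explicit control dynamically: $f$ is univalent on $D(2\pi i,\pi)$ and, since $|f'(2\pi i)|=2\pi$, Koebe's $\tfrac14$-theorem gives $f(D(2\pi i,\pi))\supset D(2\pi i,\pi)$, so the inverse branch fixing $2\pi i$ maps this disc into itself and the linearising coordinate (i.e.\ $L^{-1}$) extends to all of $D(2\pi i,\pi)$; consequently $L$ is univalent on $D(0,\pi/(4|\lambda|))$ with $\lambda=L'(0)$, and the Koebe distortion theorem on the half-radius disc $D(0,\pi/(8|\lambda|))$ yields $L(D(0,r))\subset D(2\pi i,\pi/2)\subset D(0,r)$ once $|\lambda|<\tfrac1{20}$. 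Without this univalence radius of order $1/|\lambda|$ (or an equivalent explicit bound on $L_1$ near $0$), your constants remain undetermined and the stated threshold is not established. (Your identification $S(L)=\{0,2\pi i\}$ is fine, but it is cleanest to quote Proposition~\ref{prop:sing set of linearisers} rather than re-derive it informally.)
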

 \begin{remark}
   Observe that the function $L$ even belongs to the \emph{Speiser class} of transcendental entire functions whose set of critical and asymptotic values
    is finite.
\end{remark}

    In order to prove the above theorems, we introduce a useful quantity,
      called the \emph{vanishing exponent} of a function $f\in\classb$. 
      This exponent always provides an upper bound for the eventual hyperbolic
      dimension of $f$; see Section~\ref{sec: vanishing exponent} and, in particular,
      Lemma~\ref{lem:hypdim criterion} and Lemma~\ref{lem:evhypdim leq vanishing exp}.

  \begin{defn}[Vanishing exponent]\label{defn:vanishingexponent}
   Let $f\in\classb$. Then the \emph{vanishing exponent} $\theta(f)$ is defined as 
    \[ \theta(f) \defeq \inf \setof{ t \geq 0 \colon \limsup_{w\to\infty} \sum_{z\in f^{-1}(w)} \p{ \frac{|w|}{|z| \abs{ f'(z)} } }^t = 0 } \in \closedinterval{ 0, +\infty }. \]
  \end{defn}

\subsection*{Eventual hyperbolic dimension and quasiconformal equivalence}
   Two entire functions $f$ and $g$ are called \emph{quasiconformally equivalent} if there are quasiconformal homeomorphisms 
   $\ph,\psi:\complexnumbers\rightarrow\complexnumbers$ such that
   \begin{equation}\label{eq:equivalence relation}
     \psi\composedwith f=g\composedwith \ph,
   \end{equation}
   and \emph{affinely equivalent} if $\ph$ and $\psi$ can be chosen to be affine. Quasiconformal equivalence classes form the natural
    parameter spaces of transcendental entire functions.

  It was proved in \cite{RempeStallard2010} that, for any function $f\in \classb$, the dimension of the \emph{escaping set}
    (those points converging to infinity under iteration) does not change under affine equivalence, and the question was raised
     \cite[Question~1.7]{RempeStallard2010} whether or not this remains true under quasiconformal equivalence.
     The same article also introduced a notion similar to our concept of eventual hyperbolic dimension: the \emph{eventual dimension}
   \[ \eventualdimension(f) = \inf_{R>0} \hausdorffdimension \{z\in\juliaset(f)\colon |f^{\iterated n}(z)|\geq R\text{ for all $n\geq0$}\}. \]
    This quantity is also invariant under affine equivalence; the
    same argument shows that 
     the eventual hyperbolic dimension from Definition~\ref{defn:eventualhyperbolicdimension} is invariant by affine equivalence for functions in
     $\classb$. In fact, it can be shown that, for $f\in\classb$, always
     $\eventualhyperbolicdimension(f) \leq \eventualdimension(f) = \dim(I(f))$. 

  This raises the natural question of whether the eventual hyperbolic dimension remains constant inside quasiconformal classes. 
    We can deduce from Theorem~\ref{thm:evhypdim of linearisers} that this is not the case.

   \begin{cor}[Eventual hyperbolic dimension may change]
    \label{cor:noninvariance of evhypdim inside qc classes}
      There exist functions $f,g\in\classb$ of finite positive order such that $f$ and $g$ are quasiconformally equivalent, but such that 
      \[ 1 < \eventualhyperbolicdimension(f) < \eventualhyperbolicdimension(g). \]

    Furthermore, $f$ is quasiconformally conjugate to $z\mapsto e^z-2$ on a neighbourhood of its Julia set, and likewise for $g$. 
   \end{cor}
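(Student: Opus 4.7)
The plan is to construct $f$ and $g$ as Poincar\'e linearisers of two distinct hyperbolic quadratic polynomials, chosen so that the resulting Poincar\'e functions share a common quasiconformal model near infinity but have different eventual hyperbolic dimensions. First, I would fix two parameters $c_1\neq c_2$ in the interior of the main cardioid of the Mandelbrot set, both distinct from $0$, such that $\hypdim(P_{c_1})\neq\hypdim(P_{c_2})$, where $P_c(z)=z^2+c$. The existence of such a choice follows from the real-analyticity and non-constancy of $c\mapsto\hypdim(P_c)=\dimension J(P_c)$ on the main cardioid (the value is $1$ only at $c=0$ and grows as one moves towards the boundary). Each $P_{c_i}$ is hyperbolic (hence TCE), has connected Julia set, and is not conformally conjugate to a power or Chebyshev polynomial.

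Let $f$ and $g$ be the Poincar\'e functions of $P_{c_1}$ and $P_{c_2}$ at their repelling $\beta$-fixed points. Since the singular values of each such Poincar\'e function are contained in the bounded forward orbit under $P_{c_i}$ of the critical values, which sits in the immediate attracting basin, both $f$ and $g$ belong to class $\classb$ and have finite order $\log 2/\log\abs{\rho_{c_i}}$. By replacing the linearising coordinate by a sufficiently small rescaling, I may additionally assume that both $f$ and $g$ are of disjoint type. Theorem~\ref{thm:evhypdim of linearisers} then yields
\[
\evhypdim(f)=\hypdim(P_{c_1})\neq\hypdim(P_{c_2})=\evhypdim(g),
\]
with both values strictly between $1$ and $2$. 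Swapping $f$ and $g$ if necessary gives the chain $1<\evhypdim(f)<\evhypdim(g)$.

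For the assertion that $f$ (and likewise $g$) is quasiconformally conjugate to $z\mapsto e^z-2$ on a neighbourhood of its Julia set, I would appeal to the rigidity theory for disjoint-type Eremenko--Lyubich functions: the map $e^z-2$ is a disjoint-type class $\classb$ function whose singular set is the single asymptotic value $-2$ lying in the immediate basin of its attracting fixed point. Each of $f,g$ has the analogous structure~-- a singular set clustered deep inside the immediate attracting basin of a single attracting fixed point, together with a single logarithmic tract at infinity. A quasiconformal surgery supported inside the attracting Fatou component matches the singular-value data of $f$ (respectively $g$) with that of $e^z-2$ while leaving the map unchanged on a neighbourhood of the Julia set, yielding the required conjugacy. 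The quasiconformal equivalence of $f$ and $g$ themselves then follows by transitivity, combined with an extension of the Julia-set conjugacy to an equivalence on the entire plane, using that both $f$ and $g$ are in class $\classb$ of finite order with compatible tract structure.

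The main technical obstacle lies in the quasiconformal surgery converting the clustered critical-value structure of a Poincar\'e function into the single asymptotic value of $e^z-2$: this requires a careful deformation inside the attracting Fatou component, compressing the accumulation of critical orbits into a single asymptotic value without altering the dynamics near the Julia set and while preserving class $\classb$ membership and the disjoint-type property. Verifying that the deformation can be performed uniformly for both $f$ and $g$, with the end result being an entire function quasiconformally equivalent to $e^z-2$ near the Julia set, is where most of the work resides.
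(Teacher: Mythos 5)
The first half of your construction is essentially the paper's: two nonzero parameters in the main cardioid with $\hypdim(P_{c_1})\neq\hypdim(P_{c_2})$ (real-analyticity and non-constancy of $c\mapsto\dim\juliaset(P_c)$), and Theorem~\ref{thm:evhypdim of linearisers} giving $\evhypdim=\hypdim(P_{c_i})\in(1,2)$ for the linearisers. But already your route to the quasiconformal \emph{equivalence} of $f$ and $g$ has a gap: you propose to get it ``by transitivity'' from two conjugacies with $e^z-2$ that are only defined near the respective Julia sets, together with an unspecified ``extension to the entire plane''; a conjugacy on a neighbourhood of the Julia set does not by itself yield a global relation $\psi\circ f=g\circ\ph$. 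The paper gets this step for free and directly: $P_{c_1}$ and $P_{c_2}$ are quasiconformally conjugate (nonzero parameters in one hyperbolic component), and a qc conjugacy of the base maps induces a qc equivalence of their Poincar\'e functions (Remark~\ref{rem:qc conj implies qc equiv of lin}, i.e.\ \cite[Proposition~3.2]{EpsteinRempeGillen2015}). Your ``rescaling of the linearising coordinate'' to reach disjoint type also needs an argument (the paper instead passes to $\lambda\poincarefunction$ and uses the affine invariance of $\evhypdim$, Theorem~\ref{thm:affine equiv same evhypdim}), but that is a repairable point.

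The genuine gap is the final assertion, and your proposed surgery attacks the wrong quantity. Matching the \emph{singular-value data} of $f$ with that of $e^z-2$ by a deformation inside the attracting Fatou component is neither sufficient nor relevant: two disjoint-type maps with identical singular values need not be conjugate anywhere, and a surgery that modifies $f$ off the Julia set produces a new function that you would still have to compare with $e^z-2$~-- nothing in your outline actually produces a conjugacy. What the statement requires is the rigidity theorem for disjoint-type maps \cite[Theorem~3.1]{Rempe2009}: disjoint-type functions that are quasiconformally equivalent \emph{near infinity} are quasiconformally conjugate on neighbourhoods of their Julia sets; and qc equivalence near infinity is governed by the geometry of the logarithmic tracts, not by where the critical or asymptotic values sit. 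The substantive verification, which your proposal omits (you assert ``a single logarithmic tract'' without proof), is that the lineariser has exactly one tract over the complement of the filled Julia set of $P_c$ and that its boundary $\Gamma=\poincarefunction^{-1}(\juliaset(P_c))$, together with $\infty$, is a quasicircle. The paper proves this using that $\juliaset(P_c)$ is a quasicircle for $c$ in the main cardioid, the invariance $\multiplier^n\Gamma=\Gamma$, and the univalence of $\poincarefunction$ near $0$ (bounded-turning criterion). Once the tract is known to be a quasidisk, \cite[Remark~2.7]{Rempe2009} yields qc equivalence near infinity with $E(z)=e^{-2}e^z$ (globally conjugate to $z\mapsto e^z-2$), and \cite[Theorem~3.1]{Rempe2009} then gives the conjugacy near the Julia sets, with no surgery at all. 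Without the single-tract/quasidisk step and the appeal to this rigidity result, the last claim of the corollary is unproved in your proposal.
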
 

\subsection*{Independent work of Mayer and Urba\'nski}
While this article was being completed, and after our results were first announced,
 Volker Mayer and Mariusz Urba\'nski informed us of a new preprint
   \cite{MayerUrbanski2019} also treating the measurable dynamics and thermodynamic 
   formalism of hyperbolic
   entire functions, including Poincar\'e functions of polynomials. In particular, they give an alternative proof of our
   Theorem~\ref{thm:evhypdim of linearisers}, and show that the final inequality can be replaced by 
   $\eventualhyperbolicdimension(L) < \hyperbolicdimension(L)<2$ if $P$ is hyperbolic
     \cite[Theorems~1.6 and~7.3]{MayerUrbanski2019}. Their 
   results also imply an alternative characterisation of our vanishing exponent;
   see Remark~\ref{rem:vanishing exponent and metric, vanishing exponent and class b}, item \ref{it: vanishing exp is finiteness exp}. 
   
\subsection*{Acknowledgements}
  We thank Dave Sixsmith for interesting discussions and feedback on our manuscript. We are also grateful to Volker Mayer and Mariusz Urba\'nski for making us aware of their recent preprints \cite{Mayer17,MayerUrbanski2019}.

\section{Notation and preliminaries}

As usual, $\naturalnumbers$ denotes the set of non-negative integers and $\C$ denotes the complex plane.
   The (Euclidean) disc of radius $R$ around a point $z\in\C$ is denoted $\Disk (z, R)$. 
   If $f$ is a non-constant, nonlinear entire function, then $\juliaset (f)$ and $\fatouset(f)$ denote its Julia and Fatou sets.   The \emph{singular set} $S(f)$ of $f$ is
 the closure of the set of finite critical and asymptotic values of $f$; compare~\cite[Section~2]{Rempe-GillenSixsmith2017}. The \emph{postsingular set} of $f$ is
     \[ \postsingular(f) \defeq \overline{\bigcup_{s\in S (f)} \{f^n(s)\colon n\geq 0\}}. \]

  An important subset of $ \juliaset (f) $ is provided by its \emph{radial} points 
   (also known as \emph{conical} points)~\cite{Przytycki1999,McMullen2000,Rempe2009a}, as mentioned
   in the introduction. 

\begin{defn}[Radial Julia set]\label{defn: radial julia set}
 Let $ f $ be a rational or transcendental meromorphic function.
 The \emph{radial Julia set} of $ f $, denoted $ \radialjuliaset (f) $,
  consists of all points  $ z \in \juliaset (f) $ for which there are a positive number $r$
 and an increasing sequence $ \sequence{ n_k }_{ k \in \naturalnumbers } $ of natural numbers with the following property: 
 for all $ k \in \naturalnumbers $, the inverse branch 
 of $ f^{ n_k } $ that sends $ f^{ n_k } (z) $ to $ z $
 extends to the spherical disc of radius $ r $ around $ f^{ n_k } (z) $. 
\end{defn}

Let $ f $ be a nonlinear entire function. \emph{Partition functions},
 also known as the \emph{Poincar\'e sequence}
 (see \cite{Przytycki1999}) will play an 
 important role in some of our arguments. They measure, in a certain sense, the 
 size of the preimage of a small disc around a point $w$ under the $n$-th iterate of $f$. 
\begin{defn}[Partition functions]
 Let $ f $ again be rational or transcendental meromorphic.
   Let $ \sigma $ be a conformal metric on $ \C $, let $ t > 0 $ and $ w \in \complexnumbers $.
  Then the \emph{Poincar\'e sequence} of $ f $ is the sequence of \emph{partition functions}
\[
 \partfun^{\sigma}(t,f^n,w)
 \defeq\sum_{z \in f^{-n} (w)} \frac{1}{\norm{ \Deriv f^n (z) }_{\sigma}^t},
 \]
  where $\norm{\Deriv f^n(z)}_{\sigma}$  denotes the norm of the derivative of $f^n$ at $z$. 
\end{defn}

 By a \emph{conformal metric} $\sigma$ we mean here a form that can be written as
  $ \rho_\sigma (z) \modd{z} $ with $ \rho_\sigma (z) > 0 $ for all $ z \in \C $;
  the norm of the derivative of a 
  holomorphic function $\phi$ with respect to $\sigma$ is 
     \[ \norm{\Deriv \phi(z)}_{ \sigma } \defeq \lvert \phi'(z) \rvert \cdot \frac{\rho_\sigma(f(z))}{\rho_\sigma(z)}. \]
    The metric used will be one of the Euclidean or spherical metric or the \emph{cylindrical metric} 
     $ { \modd{z} }/{ |z| } $ (defined on $\C\setminus\{0\}$),
  or its one-sided version
  \begin{equation} \label{eqn:cylindermetric}
    \rho_{\mcyl} (z) = \frac{ 1 }{ \max \setof{ |z|, 1 } }.
  \end{equation}
  We shall write $\euclpartfun$, $\cylpartfun$, $\mcylnorm{\Deriv f(z)}$ etc.\ to indicate which metric is used.  
  For polynomials, the choice of metric in the definition of partition functions is usually irrelevant, as long as the metric is 
   defined in a neighbourhood of the Julia set, which is compact.
   For transcendental entire functions, we shall usually use the (one-sided) cylindrical metric.

\begin{prop}[Continuity of partition  functions] \label{prop: continuity of partition functions}
 Let $ \sigma : \complexnumbers \to \positiverealnumbers $ be the bounded density of a conformal metric on $ \complexnumbers $.
 Assume that
 \begin{equation} \label{eq: condition on the metric for continuity of the partition functions}
  \lim_{ \delta \tendsto 0 } \sup \setof{ \abs{ \frac{ \sigma (z') }{ \sigma (z) } - 1 } \colon z, z' \in \complexnumbers, \abs{ z - z' } \leq \delta } = 0.
 \end{equation}

 Let $ f \in \classb $.
 Let $ w \in \complexnumbers \setcomplement \singularset (f) $ be such that 
 \begin{equation} \label{eq: a condition for the continuity of the partition function}
 \inf \setof{ \abs{ f' (z) } \colon z \in f^{ -1 } (w) } > 0.
 \end{equation}
 Then for all $ t \geq 0 $ the partition function
 \begin{equation}
 \xi \mapsto \partfun^{ \sigma } (t, f, \xi) = \familysum{ \zeta \in f^{ -1 } (\xi) }{ } \norm{ \Deriv f (\zeta) }_{ \sigma }^t
 \end{equation}
 is continuous on a neighborhood of $ w $.
\end{prop}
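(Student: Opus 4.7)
The plan is to write the partition function as a sum indexed by the inverse branches of $f$ over a small disc around $w$, and to show that each term varies continuously in $\xi$ with uniform control over the index. Since $w\notin\singularset(f)$ and $\singularset(f)$ is closed, we fix $\eta>0$ with $\Disk(w,2\eta)\setintersection\singularset(f)=\emptyset$, so that $f\colon f^{-1}(\Disk(w,2\eta))\to \Disk(w,2\eta)$ is an unbranched covering. Enumerating $f^{-1}(w)=\{z_i\}_i$, for each $i$ there is a unique holomorphic inverse branch $g_i\colon\Disk(w,2\eta)\to\C$ of $f$ with $g_i(w)=z_i$, and $f^{-1}(\xi)=\{g_i(\xi)\}_i$ for every $\xi\in\Disk(w,2\eta)$.

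The main step is to control the $g_i$ uniformly in $i$. Applying Koebe's distortion theorem on $\Disk(w,2\eta)$, there is a constant $C$ depending only on $\eta$ such that $|g_i''(\xi)/g_i'(\xi)|\leq C$ on $\Disk(w,\eta)$, and consequently $g_i'(\xi)/g_i'(w)\to 1$ uniformly in $i$ as $\xi\to w$. Since $|g_i'(w)|=1/|f'(z_i)|$ is uniformly bounded by hypothesis \eqref{eq: a condition for the continuity of the partition function}, integrating yields $|g_i(\xi)-z_i|\leq C'|\xi-w|$ on $\Disk(w,\eta)$ with $C'$ independent of $i$; in particular $g_i(\xi)\to z_i$ uniformly in $i$ as $\xi\to w$.

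Combining this with the modulus-of-continuity condition \eqref{eq: condition on the metric for continuity of the partition functions} on $\sigma$, one obtains
\begin{equation}
\frac{\norm{\Deriv f(g_i(\xi))}_{\sigma}}{\norm{\Deriv f(z_i)}_{\sigma}}
=\frac{|f'(g_i(\xi))|}{|f'(z_i)|}\cdot\frac{\sigma(\xi)\,\sigma(z_i)}{\sigma(w)\,\sigma(g_i(\xi))}\longrightarrow 1\quad\text{uniformly in }i
\end{equation}
as $\xi\to w$. Hence for any $\epsilon>0$ there is $\delta>0$ such that $(1-\epsilon)\norm{\Deriv f(z_i)}_{\sigma}\leq \norm{\Deriv f(g_i(\xi))}_{\sigma}\leq (1+\epsilon)\norm{\Deriv f(z_i)}_{\sigma}$ for all $i$ and all $\xi$ with $|\xi-w|<\delta$. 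Summing the $(-t)$-th powers term-by-term then sandwiches $\partfun^{\sigma}(t,f,\xi)$ between $(1+\epsilon)^{-t}\partfun^{\sigma}(t,f,w)$ and $(1-\epsilon)^{-t}\partfun^{\sigma}(t,f,w)$, which gives continuity at $w$ (with the convention $\infty/\infty=1$ if the common value is $+\infty$). The same argument applies at any $\xi_0\in\Disk(w,\eta/2)$ with the same family of branches, because Koebe's theorem on $\Disk(w,2\eta)$ controls $|g_i'(\xi_0)|$ in terms of $|g_i'(w)|$, so hypothesis \eqref{eq: a condition for the continuity of the partition function} persists at $\xi_0$; this yields continuity on $\Disk(w,\eta/2)$.

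The main obstacle is the uniformity in $i$, since the index set is generally infinite and the preimages $z_i$ accumulate only at $\infty$; pointwise continuity of each term is automatic, but without uniform control the tail of the series could move discontinuously. This uniformity is exactly what Koebe's distortion theorem on a common ambient disc provides, once hypothesis \eqref{eq: a condition for the continuity of the partition function} is invoked to bound $|g_i'(w)|$ independently of $i$.
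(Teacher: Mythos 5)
Your proof is correct and follows essentially the same route as the paper's: inverse branches of $f$ defined on a disc about $w$ disjoint from the singular set, Koebe distortion to control the branch derivatives and the displacement of preimages uniformly in the index, the hypothesis on $\inf|f'|$ over $f^{-1}(w)$ to make these bounds uniform, the regularity condition on $\sigma$ to handle the density ratios, and a term-by-term sandwich of the partition function. Your final paragraph, giving continuity at nearby points by rerunning the argument with the same branch family, is a small point the paper leaves implicit, but it does not change the method.
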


\begin{rems}
\begin{enumerate}
 \item The condition \eqref{eq: condition on the metric for continuity of the partition functions} is satisfied by the one-sided cylindrical metric.
 \item If $ \abs{ w } $ is large enough then condition \eqref{eq: a condition for the continuity of the partition function} is automatically satisfied; 
   see~\eqref{eqn:eremenkolyubichestimate} below.
\end{enumerate}
\end{rems}

\begin{proof}
 Let
 \begin{equation}
  r \defeq \min \setof{ \distance \p{ w, \singularset (f) }, 1 }  > 0.
 \end{equation}
Then for all $ z \in f^{ -1 } (w) $,
 the map $ f $ has a well defined holomorphic inverse branch $ f^{ -1 }_z $ defined on the disk $ \disk \p{ w, r } $
 such that $ f^{ -1 }_z (w) = z $.
 Let $ u \in \openinterval{ 0, 1 } $.
 It follows from the distortion theorem that there exists $ \kappa = \kappa (u) > 1 $ such that
 $ \kappa (u) \tendsto 1 $ as  $ u \tendsto 0 $
 and for all $ \xi \in \disk \p{ w, u r } $
 \begin{equation} \label{eq: continuity of partition function, distortion estimates on derivative}
  \kappa^{ -1 } \leq \abs{ \frac{ \p{ f^{ -1 }_z }' (\xi) }{ \p{ f^{ -1 }_z }' (w) } } \leq \kappa
 \end{equation}
 and
 \begin{equation}
  \kappa^{ -1 } \frac{ \abs{ \xi - w } }{ r }
  \leq
  \frac{ \abs{  f^{ -1 }_z (\xi) -  f^{ -1 }_z  (w) } }{ \abs{ \p{ f^{ -1 }_z }' (w) } } 
  \leq
  \kappa \frac{ \abs{ \xi - w } }{ r }.
 \end{equation}
 
 Using \eqref{eq: a condition for the continuity of the partition function} and the above we can find $ \eps = \eps (u) > 0 $ small enough so that
 if $ \xi \in \disk \p{ w, \eps } $ then 
 for all $ z \in f^{ -1 } (w) $
 \begin{equation}
  \abs{ \frac{ \sigma \p{ f^{ -1 }_z (\xi) } }{ \sigma (z) } - 1 } \leq u.
 \end{equation}
 Moreover we can also assume that
 \begin{equation}
  \abs{ \frac{ \sigma (\xi) }{ \sigma (w) } - 1 } \leq u. 
 \end{equation}

From the above and \eqref{eq: continuity of partition function, distortion estimates on derivative} it follows that there exists $ C = C (u) > 1 $
such that
 $ C (u) \tendsto 1 $ as $ u \tendsto 0 $
and for all $ \xi \in \disk \p{ w, \eps } $ and all $ z \in f^{ -1 } (w) $
\begin{equation}
 C^{ -1 }
 \leq
 \frac{ \norm{ \Deriv f^{ -1 }_z (\xi)  }_{ \sigma } }{ \norm{ \Deriv f^{ -1 }_z (w)  }_{ \sigma } }
 \leq
 C
\end{equation}
Since $ \abs{ \xi - w } < \frac{ r }{ 2 } $
it follows that any inverse branches of $ f $ defined near $ \xi $ is also well defined at $ w $,
hence $ f^{ -1 } (\xi) = \setof{ f^{ -1 }_z (\xi) : z \in f^{ -1 } (w) } $.
Thus
\begin{equation}
 C^{ -t }
 \leq
 \frac{ \partfun^{ \sigma } (t, f, \xi) }{ \partfun^{ \sigma } (t, f, w) }
 \leq
 C^t.
\end{equation}
Consequently $  \partfun^{ \sigma } (t, f, w) < \infty $ if and only if $ \partfun^{ \sigma } (t, f, \xi) < \infty $.
We assume now that $  \partfun^{ \sigma } (t, f, w) < \infty $.
Since $ C (u)^{ \pm t } \tendsto 1 $ as $ u \tendsto 0 $ it follows that $ \partfun^{ \sigma } (t, f, \xi) \tendsto  \partfun^{ \sigma } (t, f, w) $
as $ \xi \tendsto w $.
\end{proof}

 By definition, for every $t<\hyperbolicdimension f$ (with  $f$ an entire function) there is a hyperbolic set  of dimension 
   at least $t$.
   This implies easily that the partition functions of $ f $ with exponent $t$ grow exponentially. 
   We shall require a sharper, but more technical statement, given by 
     Lemma~\ref{lem:lower exp growth} below.
    It states roughly that, if the definition of the partition function
    is modified to count only preimages near a given point in the Julia set, the growth remains
    exponential.
   As we are not aware of a reference, we include a proof in Appendix \ref{sec:hypdim} for completeness.
 
\begin{lem}[Local growth of partition  functions]
  \label{lem:lower exp growth}
  Let $f$ be a non-constant, non-linear entire function, and let $U$ be an open
  set intersecting the Julia set of $f$. Let $t < \hyperbolicdimension f$.
  
  Then there exists a Jordan domain  $D\subset U$,
 positive constants  $ C$,  $\beta$, $a$ and $b$,
 a sequence $ \p{ I_p }_p $ of positive integers,
 a sequence of finite families of simply connected domains $ \p{ D_i^p }_{ 1 \leq i \leq I_p } $, $ D_i^p \subset D $
 and a sequence of integers $(\nu_p)_{p\geq 1}$ converging to $\infty$
 such that $ \abs{ \nu_p - a p } \leq b $ for all $p$, and such that the following holds.

 For all $ p \geq 1 $ and all $ 1 \leq i \leq I_p $,
 $ f^{ \nu_p } : D_i^p \to D $ is a conformal isomorphism.
For any $ w \in D $
 and $ 1 \leq i \leq I_p$, let
   $ z_i $ denote the unique preimage of $ w $ by $ f^{ \nu_p } $ in $ D_i^p $.
Then
  \begin{equation} \label{pressure-explosion-on-the-ifs}
    \familysum{ i = 1 }{ I_p } \abs{ \p{f^{\iterated \nu_p}}' (z_i) }^{- t}
    \geq
    C e^{\beta \nu_p}.
  \end{equation}
\end{lem}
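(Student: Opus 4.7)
The plan is to exploit the classical correspondence between hyperbolic dimension and the topological pressure of associated conformal iterated function systems (``cookie cutters''). Since $t<\hyperbolicdimension(f)$, by Definition there exists a hyperbolic set $K\subset\juliaset(f)$ with $\hausdorffdimension(K)>t$. I would first pick $s$ with $t<s<\hausdorffdimension(K)$ so as to have a positive gap to play with.

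First I would construct a conformal IFS of inverse branches adapted to $K$. Replacing $f$ by a suitable iterate $f^n$, one may assume $|f'|\geq\lambda>1$ on $K$. By the standard cookie-cutter construction (compare the argument underlying \cite{Rempe2009a}), one can find a Jordan domain $\Omega$ meeting $K$, finitely many disjoint simply connected open subsets $V_1,\dots,V_N$ of $\Omega$ on which $f^n$ is univalent onto $\Omega$, and inverse branches $\psi_j=(f^n\vert_{V_j})^{-1}\colon\Omega\to V_j\subset\Omega$, whose limit set $\Lambda\subset K$ still satisfies $\hausdorffdimension(\Lambda)>s$. Here one may need to pass to a subset of $K$ and to a further iterate in order to extract a uniformly expanding Markov piece; this is the main technical point of the proof. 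Bowen's formula for conformal IFS then gives that the topological pressure at exponent $t$ is strictly positive, $\beta_0:=\pressure(-t\log|(f^n)'|)>0$, and hence
\begin{equation}
 \sum_{(j_1,\dots,j_p)\in\{1,\dots,N\}^p}\sup_{\Omega}|(\psi_{j_1}\circ\cdots\circ\psi_{j_p})'|^{t}\;\geq\;C_0\,e^{\beta_0 p}.
\end{equation}

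Second I would relocate the construction inside the given open set $U$. Since $U\cap\juliaset(f)\neq\emptyset$, the blowing-up property of Julia sets of entire functions gives an integer $k\geq 0$ and a Jordan subdomain $D\compactlyembedded U$ such that $f^k\colon D\to\Omega$ is a conformal isomorphism onto $\Omega$ (pick a backward orbit of a point in $\Omega\cap J(f)$ landing in $U$ and restrict to a small neighbourhood). Setting $\nu_p:=np+k$, $a:=n$, $b:=k$, and letting $D_i^p\subset D$, $1\leq i\leq I_p:=N^p$, be the preimages under $f^k$ of the sets $\psi_{j_1}\circ\cdots\circ\psi_{j_p}(\Omega)$ (one for each word $(j_1,\dots,j_p)$), each $f^{\nu_p}\colon D_i^p\to D$ is a conformal isomorphism by construction.

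Third I would pass from sup-norms to pointwise derivatives. Every composition $\psi_{j_1}\circ\cdots\circ\psi_{j_p}$ extends univalently to a slightly larger domain containing $\overline{\Omega}$, so by the Koebe distortion theorem the ratio $|(\psi_{j_1}\circ\cdots\circ\psi_{j_p})'(w_1)|/|(\psi_{j_1}\circ\cdots\circ\psi_{j_p})'(w_2)|$ is bounded by a universal constant $M$, uniformly in the word and in $w_1,w_2\in\Omega$. The same distortion estimate applies to $f^k|_D$, independently of $p$. Writing the chain rule $|(f^{\nu_p})'(z_i)|=|(f^k)'(z_i)|\cdot|(f^{np})'(f^k(z_i))|$ and inverting, the lower bound on the sup-norm partition sum above translates into
\begin{equation}
 \sum_{i=1}^{I_p}\bigl|(f^{\nu_p})'(z_i)\bigr|^{-t}\;\geq\;C\,e^{\beta\nu_p}
\end{equation}
for every $w\in D$, with $\beta:=\beta_0/n>0$ and $C>0$ independent of $p$ and $w$, which is the claim with $|\nu_p-ap|\leq b$.

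The main obstacle is the first step: producing an honest finite cookie-cutter IFS sitting inside $K$ whose limit set has dimension arbitrarily close to that of $K$. Hyperbolic sets for entire functions need not be Markov, so one must approximate $K$ from below by conformal expanding repellers with finite alphabets; once this is in place, Bowen's formula and the Koebe distortion argument make the rest essentially mechanical.
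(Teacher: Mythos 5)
Your steps 2 and 3 --- relocating the system into $U$ and converting sup-norm partition sums into pointwise sums at the preimages of $w$ via Koebe distortion --- are essentially the same as in the paper and are fine. The genuine gap is your step 1, and you have placed the whole difficulty there without an argument. What is available as a standard input (and what the paper's appendix also starts from) is a finite conformal iterated function system of inverse branches of iterates of $f$ whose limit set has dimension close to $\hyperbolicdimension f$, but with \emph{variable} times $m_i$: hyperbolic sets need not carry any Markov structure, and the usual extraction procedures produce subshifts of finite type or branches of different iterates, not a full shift of inverse branches of one fixed iterate $f^n$ mapping each $V_j$ univalently onto a common domain $\Omega$. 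The constant-time, full-shift cookie cutter with limit-set dimension still exceeding $t$ that you invoke is exactly what the lemma needs (a single exponent $\nu_p$ serving all $I_p$ branches), and it does not follow from ``the standard cookie-cutter construction''; with variable times, a word of length $p$ has a word-dependent total iteration time, and your partition-sum estimate no longer yields a lower bound for one fixed iterate.

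The paper's proof exists precisely to bridge this point: starting from the variable-time IFS with positive pressure at exponent $t$, it restricts the sum to the composition class $S_p$ of length-$p$ words in which each symbol $i$ occurs a prescribed number $k_i(p)\approx \varepsilon_i p$ of times, so that every word of $S_p$ has the same total time $\nu_p=\sum_i k_i(p)m_i$; a multinomial/Stirling estimate shows this restriction costs only a polynomial factor $p^{-\delta}$, so the restricted sum still grows like $e^{\beta\nu_p}$. If you want to keep your route, you must actually prove your constant-time approximation (for instance by this very composition-class argument, or by fixing a Markov state in an expanding repeller and taking all admissible words of one fixed length returning to it), rather than cite it as standard; as written, the ``main obstacle'' you acknowledge is the entire content of the lemma. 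A secondary, fixable point: your relocation asks for a single inverse branch of $f^k$ defined on all of $\Omega$ with image in $U$, which can fail because $\Omega$ may meet singular values of every iterate (e.g.\ when the postsingular set is dense in $\juliaset(f)$); the paper avoids this by mapping a small piece of $D$ forward univalently into $U$ and then using the blow-up property to map back over the IFS pieces.
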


\subsection*{Pressure and topological Collet-Eckmann maps} 

 If $f$ is a polynomial\footnote{%
  In this article,  we consider only the iteration of polynomials and entire functions, and hence restrict our
    discussion to this case. However,
   anything stated for polynomials in this subsection also holds for rational functions
    with non-empty Fatou set, without modifications.},
   then the \emph{topological pressure} of $f$ is defined as     
  \begin{align}\notag
   \topologicalpressure &\colon (0,\infty)\to \R; \\\label{eq:pressure on riemann sphere}
 \topologicalpressure(\DimensionParameter) &= 
   \limsup_{n\tendsto\infty}
   \frac{1}{n}\log \sum_{z\in f^{-n}(w)} \sphericalnorm{\Deriv f^n(z)}^{-\DimensionParameter} = 
            \limsup_{n\to\infty} \frac{1}{n} \log \partfun^{\sph}(\DimensionParameter, f^n , w).
  \end{align}
  This function is defined and independent of $w$ provided that $w$ does not belong to  
    a certain set $ E \subset \C \setminus \postsingularset (f) $ of zero Hausdorff dimension;
    see \cite{Przytycki1999}.
   There are several other (equivalent) definitions of the pressure function, 
    see \cite{Przytycki1999,Przytycki2004}. A key property of the pressure function is that the
    hyperbolic dimension of $f$ coincides with the smallest zero of $\topologicalpressure$; see 
    \cite{DenkerUrbanski1991d,Przytycki1993,Przytycki1999},
    and also \cite{Przytycki2004,PrzytyckiUrbanski2010}. Furthermore,
    $\topologicalpressure(\DimensionParameter)$ is convex in  $\DimensionParameter$. 

\begin{remark} \label{rem:pressure in class B}
 The formula~\eqref{eq:pressure on riemann sphere} can also be used to define the pressure
 for a large class of transcendental entire or meromorphic functions in
 $\classb$ including the Speiser class, see \cite{BaranskiKarpinskaZdunik2012}. Here 
   $\topologicalpressure(\DimensionParameter)$ may be infinite. Again, 
  the hyperbolic dimension is given by the infimum of
   the set of $\DimensionParameter$ for which the pressure is not positive.
  However, we will only use the pressure functions of polynomials.
 \end{remark}
  
 It is possible to characterise the \emph{topological Collet-Eckmann (TCE)  condition} using the 
  values of the pressure function; this is the most convenient definition of TCE for our purposes. 
  For many other equivalent definitions of the TCE condition, 
 see \cite{PrzytyckiRiveraLetelierSmirnov2003}.
\begin{defn}[Topological Collet-Eckmann  maps] \label{defn:TCE}
 A polynomial satisfies the \emph{topological Collet-Eckmann condition}
  if its pressure $\topologicalpressure(\DimensionParameter)$
  is negative for large values of $\DimensionParameter$.
\end{defn}

Since the pressure function is convex in $\DimensionParameter$, for TCE maps there is 
   a unique zero of $\topologicalpressure$,  which coincides with the hyperbolic dimension.
  We will also use the following result.
\begin{thm}[Hausdorff dimensions of polynomial  Julia  sets]\label{thm:dim of julia of tce}
Let $ P $ be a polynomial with  connected Julia set,  and suppose that 
    $ P $  is not conformally conjugate to a Chebyshev polynomial or a power map. 
   Then
 \[ \hausdorffdimension (\juliaset (P)) > 1. \]
If, moreover, $ P $ satisfies the topological Collet-Eckmann condition, then
 \[ 1 < \hausdorffdimension (\juliaset (P)) < 2. \]
\end{thm}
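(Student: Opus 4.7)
The plan is to establish the two inequalities separately, both as consequences of classical results mentioned earlier in the paper.

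For the lower bound $\hausdorffdimension(\juliaset(P)) > 1$, I would appeal to Zdunik's theorem \cite{Zdunik1990}, already cited above. The argument runs through the measure of maximal entropy $\mu_P$: since $P$ has connected Julia set, by Brolin's theorem $\mu_P$ coincides with harmonic measure on $\juliaset(P)$ viewed from $\infty$, and because $\juliaset(P)$ is the boundary of the simply connected basin of infinity, Makarov's theorem forces $\hausdorffdimension(\mu_P) = 1$. Zdunik's classification of the rational maps for which $\hausdorffdimension(\mu_P) = \hausdorffdimension(\juliaset)$ lists only power maps, Chebyshev polynomials, and Latt\`es examples (up to conformal conjugacy). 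Latt\`es maps have Julia set equal to $\widehat{\C}$ and are not polynomials, so our hypothesis excludes all three cases and yields $1 < \hausdorffdimension(\juliaset(P))$.

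For the upper bound $\hausdorffdimension(\juliaset(P)) < 2$ under the TCE condition, I would combine Przytycki's identity $\hyperbolicdimension(P) = \hausdorffdimension(\juliaset(P))$ for TCE polynomials \cite{Przytycki1998} with the pressure characterisation recalled just before the theorem statement: $\hyperbolicdimension(P)$ is the smallest, and under TCE the unique, zero of the convex pressure function $\topologicalpressure(\DimensionParameter)$. It therefore suffices to show $\topologicalpressure(2) < 0$. A standard bounded-distortion computation on small inverse branches of $P^n$ over a disc $\Disk(w, r)$ around a generic $w$ shows that $\partfun^{\sph}(2, P^n, w)$ is comparable to the total spherical area of $P^{-n}(\Disk(w, r))$, which is bounded above by the total area $4 \pi$ of the sphere. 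This yields $\topologicalpressure(2) \leq 0$ and hence $\hyperbolicdimension(P) \leq 2$; to upgrade to strict inequality one uses that the basin of infinity is fully invariant and occupies a definite fraction of the sphere, forcing the area of preimages meeting $\juliaset(P)$ to decay exponentially rather than merely remain bounded.

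The main obstacle is exactly this last upgrade from $\topologicalpressure(2) \leq 0$ to $\topologicalpressure(2) < 0$: convexity and TCE alone only give the non-strict version. Ruling out $\hyperbolicdimension(P) = 2$ requires a genuine quantitative input. For TCE polynomials this can be extracted from the existence of a $\delta$-conformal measure on the radial Julia set with $\delta < 2$ constructed in \cite{PrzytyckiRiveraLetelier2007}, or alternatively from the H\"older regularity of the Riemann map of the basin of infinity under TCE, which implies $\hausdorffdimension(\juliaset(P)) < 2$ directly. Either route makes essential use of the TCE hypothesis beyond the formal consequences of convexity.
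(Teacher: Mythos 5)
Your proposal is correct and follows essentially the same route as the paper: the lower bound is exactly the paper's chain (Zdunik's dimension rigidity for the measure of maximal entropy, Brolin's identification of that measure with harmonic measure on $\juliaset(P)$ viewed from $\infty$, and Makarov's theorem for boundaries of simply connected domains), while the strict upper bound ultimately rests on the same TCE-specific input the paper cites, namely \cite[Theorem~4.3]{PrzytyckiRiveraLetelierSmirnov2003} (equivalently, the H\"older regularity of the basin of infinity under TCE that you invoke). Your intermediate pressure/area computation only yields $\topologicalpressure(2)\leq 0$, as you acknowledge, and the ``definite fraction of the sphere forces exponential decay'' heuristic is not what closes the argument, so that passage can simply be dropped in favour of the citation.
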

\begin{proof}
 The first inequality is well-known. Indeed, by \cite[Theorem 2]{Zdunik1990},
  the Hausdorff dimension of the Julia set of $ P $ is greater than the Hausdorff dimension of the equilibrium measure 
  of $ P $.
  The equilibrium measure is given by the harmonic measure on $ \juliaset (P) $, as viewed from 
    $\infty$ \adz{\cite[Section~16]{Brolin1965}}. If $ \juliaset (P) $ is connected, then 
    the basin of infinity is simply-connected, and 
    the dimension of the harmonic measure on the boundary of a simply-connected domain
  is $ 1 $ \cite{Makarov1985}; see also
  \cite[Section~VI.5]{GarnettMarshall2005}.

The strict upper bound for TCE maps follows from \cite[Theorem~4.3]{PrzytyckiRiveraLetelierSmirnov2003}.
\end{proof}

\subsection*{The Eremenko-Lyubich class}
 Recall that the \emph{Eremenko-Lyubich  class} $\classb$ is defined as
    \[ \classb \defeq \{f\colon \C\to\C \text{ transcendental entire}\colon S(f)\text{ is bounded}\}. \]
  Suppose that $f\in\classb$ and $ R > 1 $ is such that $S(f)\subset D(0,R)$. 
   Set $W\defeq \C\setminus\overline{D(0,R/2)}$. Then every component
   $T$ of  $f^{-1}(W)$ is simply-connected and  $f\colon T\to W$ is a universal covering map. 
  These components are called \emph{logarithmic tracts} of $f$ (over  $\infty$).

   Suppose furthermore that $R$ was chosen such that additionally
       $\lvert f(0)\rvert < R$. 
      A well-known estimate due to Eremenko and Lyubich \cite[Lemma~1]{EremenkoLyubich1992} states that 
    \begin{equation}\label{eqn:eremenkolyubichestimate}
       \cylnorm{\Deriv f(z)} \geq \frac{1}{4}\log \left\lvert \frac{f(z)}{R}\right\rvert 
    \end{equation}
   whenever $\lvert f(z) \rvert > R$. In particular,  the cylindrical derivative
    of $\cylnorm{\Deriv f(z)}$ at $z$ tends to infinity as $f(z)\to\infty$.

\subsection*{Hyperbolic and disjoint-type entire functions}
\begin{defn}[Hyperbolic functions]\label{def: hyperbolic function}\label{def: postsingular}
A transcendental entire function $f$ is said to be \emph{hyperbolic} if
   $f\in\classb$ 
 and $S(f)$ is a subset the union of the basins of attraction of
 the attracting periodic cycles of $f$. 
\end{defn}
 Equivalently, a transcendental entire function $f$ is hyperbolic if
 $\postsingular(f)$
 is a compact subset of the Fatou set. Compare~\cite{Rempe-GillenSixsmith2017} for a discussion of and background on the definition of hyperbolicity for transcendental functions. 
   
An entire transcendental function is said to be of \textit{disjoint type}
  if it is hyperbolic with connected Fatou set.
  We remark that any function $ f \in \classb $ can be rescaled into a disjoint-type function; 
   more precisely, $ \lambda f $ is of disjoint type provided that $ \abs{ \lambda } $ is small enough;
   see e.g.~\cite[Example p.392]{BaranskiKarpinska2007}.

\section{Vanishing exponent and eventual hyperbolic dimension}
\label{sec: vanishing exponent}

In this section we investigate the eventual hyperbolic dimension for functions in the 
  Eremenko-Lyubich  class, using the vanishing exponent $\vanishingexponent(f)$ defined 
   in Definition~\ref{defn:vanishingexponent}.
   We begin by connecting this exponent with  the eventual hyperbolic dimension. We then  show
    that both exponents are at least $1$, and equal $1$ for some functions in the Mayer-Urba{\'n}ski class of functions satisfying the 
   balanced growth condition (Proposition~\ref{prop:in mu class evhypdim is 1}).
   Finally we turn to the behaviour of both quantities under affine and quasiconformal equivalence.

\subsection*{Vanishing exponent}
 Note that the series in the 
   vanishing exponent from Definition~\ref{defn:vanishingexponent} is
   precisely the cylindrical partition function
      $\cylpartfun(t,f,w)$. 
   We begin by relating this exponent to the behaviour of the partition functions
    of the disjoint type functions $\lambda f$, as $\lambda$ tends to zero.

\begin{lem}
  \label{lem:hypdim criterion}Let $\MainMapping\in\classb$ and $\DimensionParameter\geq 0$. Then the two following statements are
  equivalent.
  \begin{enumerate}
    \item\label{item:partitionfunctiontozero}
      $\cylpartfun (\DimensionParameter, f, w) \tendsto 0$ as $w \tendsto \infty$,
    
    \item 
    $\cylpartfun (\DimensionParameter, \RescalingParameter \MainMapping, w) \tendsto 0$
      as
      $\RescalingParameter \tendsto 0$ uniformly in $w$, for $|w| > 1$.
  \end{enumerate}
\end{lem}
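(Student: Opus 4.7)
The plan is to exploit a scaling symmetry of the cylindrical partition function and then deduce the equivalence by a substitution. The central observation I will establish is the identity
\begin{equation*}
\cylpartfun(\DimensionParameter, \RescalingParameter \MainMapping, w) = \cylpartfun(\DimensionParameter, \MainMapping, w/\RescalingParameter), \qquad \RescalingParameter \in \C\setminus\{0\}.
\end{equation*}
This rests on two elementary facts. First, the cylindrical derivative itself is unchanged when $\MainMapping$ is replaced by $g = \RescalingParameter \MainMapping$: the factor $|\RescalingParameter|$ appears in both $|g'(z)|$ and $|g(z)|$ and therefore cancels in $\cylnorm{\Deriv g(z)} = |g'(z)| \cdot |z|/|g(z)|$. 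Second, $g^{-1}(w) = \MainMapping^{-1}(w/\RescalingParameter)$. Summing the common terms over the shifted preimage set yields the identity.

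With the identity in hand, both implications reduce to a change of variables. To derive the second condition from the first, I would fix $\eps > 0$ and choose $R > 0$ such that $\cylpartfun(\DimensionParameter, \MainMapping, w') < \eps$ whenever $|w'| > R$; then for $|\RescalingParameter| < 1/R$ and $|w| > 1$ one has $|w/\RescalingParameter| > 1/|\RescalingParameter| > R$, and hence $\cylpartfun(\DimensionParameter, \RescalingParameter \MainMapping, w) < \eps$ with a bound depending only on $|\RescalingParameter|$. This is precisely the uniformity in $w$ that is required. For the converse, given $w'$ with $|w'|$ large, I would set $\RescalingParameter \defeq 2/|w'|$ and $w \defeq \RescalingParameter w'$, giving $|w| = 2 > 1$, $\RescalingParameter \tendsto 0$, and $w/\RescalingParameter = w'$; the identity then converts the second condition into the first.

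I do not anticipate a genuine obstacle in this argument: both directions reduce to the scaling identity and a substitution. The only small point of care is to align the uniformity in $w$ of the second condition with the pointwise decay at infinity of the first, but this is automatic from the estimate $|w/\RescalingParameter| \geq 1/|\RescalingParameter|$ under the constraint $|w| > 1$.
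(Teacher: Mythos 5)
Your proof is correct and follows essentially the same route as the paper: the key identity $\cylpartfun(\DimensionParameter,\RescalingParameter f,w)=\cylpartfun(\DimensionParameter,f,w/\RescalingParameter)$, coming from the invariance of the cylindrical metric under linear rescaling, is exactly the paper's argument, and your substitution bookkeeping in both directions just makes explicit what the paper leaves as a one-line deduction.
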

\begin{proof}
  As the cylindrical metric is invariant under linear rescalings, it follows that
  $\cylpartfun (\DimensionParameter, \RescalingParameter \MainMapping, w)
     =
    \cylpartfun \left( \DimensionParameter, \MainMapping, \frac{w}{\RescalingParameter} \right)$
    for all
  $\RescalingParameter \in \complexnumbers^{\ast}$ and all $w$.
 Hence, the two properties are equivalent.
\end{proof}

\begin{rem}[Vanishing exponents]\label{rem:vanishing exponent and metric, vanishing exponent and class b}
\leavevmode
\begin{enumerate}[(a)]
 \item 
   The \emph{vanishing exponent} is precisely the infimum of the 
   values of $\DimensionParameter$ 
    for which the properties of Lemma \ref{lem:hypdim criterion} hold. 
 \item The assumption $f\in\classb$ is not used in the proof.  However, 
     an entire function  $f\notin\classb$ cannot satisfy the statements in 
     Lemma~\ref{lem:hypdim criterion} for any $\DimensionParameter\in\R$;
     compare~\cite{sixsmithclassB}.
 \item Note that $\cylpartfun(\DimensionParameter,f,w) = \mcylpartfun(\DimensionParameter,f,w)$ for
    all sufficiently large $w$. Indeed, if $w$ is sufficiently large, then
    all preimages of  $w$ have modulus at least $1$, and hence the density of the two metrics agree
    at these preimages. Similarly, for $\lvert w\rvert > 1$ and all sufficiently small $\lambda$, 
    $\cylpartfun(\DimensionParameter,\lambda f,w) = \mcylpartfun(\DimensionParameter,\lambda f,w)$.

     So we may replace the cylindrical metric by the one-sided cylindrical metric in 
      Lemma~\ref{lem:hypdim criterion}, and also in the definition of the vanishing exponent.
 \item \label{it: vanishing exp is finiteness exp}
 In \cite[Theorem 4.1]{MayerUrbanski2019}, the authors prove the following important
    statement: if $ \cylpartfun (\tilde{t}, f, w) < \infty $ for sufficiently large $ w $, 
    then~\ref{item:partitionfunctiontozero} holds for $ t > \tilde{t} $. Hence
    $ \theta (f) $ is also the critical exponent for finiteness of the partition function. 
    
      Moreover, the authors introduce another exponent, $\Theta(f)$, related to 
        an ``integral mean spectrum''. For certain functions~-- those which have
        ``negative spectrum'' in the sense of \cite{MayerUrbanski2019}~-- this exponent agrees
        with our vanishing exponent $\theta(f)$. 
 \end{enumerate}
\end{rem}

\begin{lem}[Vanishing exponent and eventual hyperbolic dimension]
  \label{lem:hyp dim eventually bounded by}\label{lem:above ev hyp dim}
  \label{lem:evhypdim leq vanishing exp}
  Let $f\in\classb$.
  Then
  \[ \limsup_{\RescalingParameter \tendsto 0} \hyperbolicdimension
     (\RescalingParameter \MainMapping) \leq \vanishingexponent (f) \qquad\text{and}\qquad
    \eventualhyperbolicdimension (f) \leq \vanishingexponent (f). \]
\end{lem}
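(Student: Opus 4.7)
The strategy is to combine the hypothesis $t > \theta(f)$ with a Moran-type covering argument. Fix $t > \theta(f)$. By Definition~\ref{defn:vanishingexponent} and Remark~\ref{rem:vanishing exponent and metric, vanishing exponent and class b}(c), there exist $R_0 \geq 1$ and $\kappa \in (0,1)$ such that $\cylpartfun(t, f, w) \leq \kappa$ for all $\abs{w} \geq R_0$; by Lemma~\ref{lem:hypdim criterion} and the same remark, the analogous estimate $\cylpartfun(t, \lambda f, w) \leq \kappa$ then holds uniformly for $\abs{w} \geq 1$ once $\abs{\lambda}$ is sufficiently small.

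Let $K$ be a hyperbolic set of $f$ (respectively of $\lambda f$) with $\inf_{z \in K}\abs{z} \geq R_0$ (respectively $\geq 1$). The cylindrical derivative is conformal and therefore obeys the chain rule $\cylnorm{\Deriv f^{n+1}(z)} = \cylnorm{\Deriv f^n(f(z))} \cdot \cylnorm{\Deriv f(z)}$. Grouping preimages $z \in f^{-(n+1)}(w) \cap K$ according to their image $z' = f(z)$, which by forward invariance lies in $f^{-n}(w) \cap K$, and bounding the inner sum by $\cylpartfun(t, f, z') \leq \kappa$ (valid since $\abs{z'} \geq R_0$), induction yields
\[ \sum_{z \in f^{-n}(w) \cap K} \cylnorm{\Deriv f^n(z)}^{-t} \leq \kappa^n \]
for all $w \in K$ and all $n \geq 0$. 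To extract a dimension bound, cover $K$ by finitely many small cylindrical disks $D_1, \ldots, D_N$ centred at points of $K$, with radii chosen small enough that every inverse branch of $f^n$ with image meeting $K$ is univalent on the doubled disks $2D_i$; uniform expansion on $K$ ensures this. Pulling back by these branches and invoking Koebe distortion produces, for every $n$, a cover of $K$ by preimage disks of cylindrical diameter at most $C\cylnorm{\Deriv f^n(z)}^{-1}$ for a universal $C$, whose $t$-th powers sum to at most a constant times $\kappa^n \to 0$. Since $K$ is compact and bounded away from $0$, the cylindrical and Euclidean metrics are bi-Lipschitz on $K$, and hence $\hausdorffdimension(K) \leq t$.

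Taking the supremum over all such $K$ and then letting $R \to \infty$ yields $\evhypdim(f) \leq t$. For the rescaling bound, one additionally uses the standard fact that $\lambda f$ is of disjoint type for $\abs{\lambda}$ sufficiently small, so that its Julia set --- and therefore every hyperbolic set of $\lambda f$ --- lies in $\setof{\abs{z} > 1}$; the same covering argument then gives $\hypdim(\lambda f) \leq t$ for all sufficiently small $\abs{\lambda}$, whence $\limsup_{\lambda \to 0}\hypdim(\lambda f) \leq t$. Letting $t \downarrow \theta(f)$ completes both proofs. The main obstacle will be the Moran-type covering step: the partition-function iteration is essentially automatic from the chain rule and forward invariance, but verifying that the preimages of the $D_i$ cover $K$ with uniformly bounded Koebe distortion constants takes some care, and is where the hyperbolicity of $K$ (through uniform expansion and the resulting Markov-type structure) is really used.
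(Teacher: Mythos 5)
Your overall architecture (the rescaling reduction via Lemma~\ref{lem:hypdim criterion}, a chain-rule induction on partition functions, and a Koebe pullback of a finite cover) is the same as the paper's, but there is a genuine gap at the sentence asserting that the $t$-th powers of the diameters of the level-$n$ cover sum to at most a constant times $\kappa^n$. The inequality you actually prove, $\sum_{z\in f^{-n}(w)\cap K}\cylnorm{\Deriv f^{n}(z)}^{-t}\le\kappa^{n}$, runs only over preimages of $w$ that lie \emph{in} $K$, whereas your cover of $K$ consists of pullbacks of the discs $D_i$ under all inverse branches of $f^{n}$ whose image merely \emph{meets} $K$. For such a branch, the point at which its contribution must be evaluated --- the unique preimage of the centre $w_i$ of $D_i$ inside that pullback (or, after distortion, the point of $K$ the pullback contains) --- is in general not an element of $f^{-n}(w_i)\cap K$: forward invariance of $K$ gives no control over which preimages of $w_i$ lie in $K$, and a component of $f^{-n}(D_i)$ can meet $K$ while containing no preimage of $w_i$ belonging to $K$ at all (think of $K$ a minimal, e.g.\ Sturmian-type, forward-invariant expanding set, where each point has essentially one backward orbit inside $K$ while the number of components meeting $K$ grows with $n$). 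So the restricted sum you control does not dominate the cover sum, and the key step does not follow from your induction.

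The paper avoids this by running the induction over \emph{components} rather than over preimages in the hyperbolic set: it fixes a basepoint $\zeta_Q$ in each covering disc $Q$, indexes level-$n$ objects by the components $U$ of $f^{-n}(Q)$ intersecting the hyperbolic set, and bounds the sum over the children of $U$ by the \emph{full} partition function $\cylpartfun(t,f,\zeta_U)\le 1/2$ evaluated at the basepoint $\zeta_U\in U$, which need not lie in the hyperbolic set. This is only legitimate if every such $\zeta_U$ stays in the region where the partition-function bound holds, and that is precisely the extra geometric input your proposal never supplies: for $\lambda f$ the paper secures it through $\lambda f(\overline{D(0,1)})\subset D(0,1/2)$ (which, rather than disjoint-type per se, is what forces the Julia set and all pullback components off the closed unit disc), and for a far-out hyperbolic set of $f$ itself by choosing the enlarged discs $\tilde Q$ inside an expanding neighbourhood of the set, so that every component of $f^{-n}(\tilde Q)$ meeting it stays outside $D(0,R_0)$ and is mapped univalently onto $\tilde Q$. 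If you redo your bookkeeping in this component-based way and add that containment argument, your proof becomes the paper's; as written, the covering estimate is not justified.
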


\begin{proof}
  \lrg{Let $\DimensionParameter > \vanishingexponent (f)$. We shall show that,} if
  $\lvert \RescalingParameter\rvert$ is small enough, the hyperbolic dimension of
  $\RescaledMainMapping_{\RescalingParameter} \defeq \RescalingParameter \MainMapping$
  is smaller than $\DimensionParameter$.

  If $\lvert \lambda\rvert$ is sufficiently small, then  
  $\cylpartfun (\DimensionParameter, f_{\lambda}, w) \leq 1/2$ whenever 
  $|w|\geq 1$.
   Furthermore, again for 
  $\RescalingParameter$ small enough, 
  the singular set of $\RescaledMainMapping_{\RescalingParameter}$ is
   a subset of the disc $\Disk\left(0, \frac{1}{2}\right)$
  and
  $\RescaledMainMapping_{\RescalingParameter}(\overline{\Disk(0, 1)})
   \subset
   \Disk\left(0, \frac{1}{2}\right)$.
  As a consequence, the forward orbits of the elements of the singular set  of
  $\RescaledMainMapping_{\RescalingParameter}$
  belong to
  $\Disk\left(0, \frac{1}{2}\right)$
  and the Julia set lies in the complement of the closed unit disc.
  
  Fix such a $\RescalingParameter$ and let $X$ be a hyperbolic set for
  $ f_{ \lambda } $, say $X\subset D(0,R)$ with $ R > 1 $.   
   Cover \( X \) by a finite family $\CoverFamily$ of closed topological discs
  $Q \subset \overline{D(0,R)}\setminus  D(0,1)$. 
   For such $Q$ and $n\geq 0$, let $\mathcal{U}_n(Q)$ denote the set of connected components of
   $f_{\lambda}^{-n}(Q)$ intersecting $X$. Then $\mathcal{U}_n\defeq \bigcup_{Q\in\CoverFamily} \mathcal{U}_n(Q)$
    is a cover of $X$ for all $n\geq 0$. We shall use these covers to estimate the Hausdorff dimension of $X$
   from above.

    For each 
     $Q\in\CoverFamily$, 
    there is a slightly larger topological disc $\tilde{Q} \contains Q$ disjoint from 
    $D(0,1/2)$, and thus not intersecting $\postsingularset(f)$. Hence, if
     $n\geq 1$ and $U$ is a connected component of $f_{\lambda}^{-n}(Q)$, then $f_{\lambda}^n$ is univalent
     on the component $\tilde{U}$ of $f_{\lambda}^{-n}(\tilde{Q})$ containing $U$. Observe that 
     $\tilde{U}$ does not intersect $D(0,1)$ by choice of $\lambda$. 
     Applying 
     Koebe's distortion theorem to
      a branch $\phi$ 
      of $\log\bigl( (f_{\lambda}^n|_{\tilde{Q}})^{-1}\bigr)$, there is a constant $C_0>1$ (depending only on $\CoverFamily$) 
      such that   
   \[
     \frac{ \cylnorm{ \Deriv f_{\lambda}^n (\zeta) } }{ \cylnorm{ \Deriv f_{\lambda}^n (z) }}
     = 
        \frac{\lvert \phi'(f_{\lambda}^n(z)) \rvert}{\lvert \phi'(f_{\lambda}^n(\zeta))\rvert} \cdot
         \frac{\lvert f_{\lambda}^n(z)\rvert}{\lvert f_{\lambda}^n(\zeta)\rvert}
     \leq 
         R \cdot \frac{ \abs{ \phi'(f_{\lambda}^n(z)) } }{ \abs{ \phi'(f_{\lambda}^n(\zeta))} }
     \leq
     C_0
   \]
     for all $z,\zeta \in U$.  In particular, 
     there is a constant $C_1$ depending only on $\CoverFamily$ such that
    \[     \cyldiam (U)
        \leq
   \frac
     {C_1}
     { \cylnorm{
              \Deriv  \RescaledMainMapping_{\RescalingParameter}^{\Iterated{n}} (z)
               }
     }. \]

Let $Q\in\CoverFamily$ and fix $\zeta_Q\in Q$. For every $n\geq 1$ and all
   $U\in \mathcal{U}_n(Q)$, let $\zeta_U$ be the unique element of  $f_{\lambda}^{-n}(\zeta)\cap U$. If
   $V_1,\dots, V_m$ are the components of $f_{\lambda}^{-1}(U)$ that intersect $X$, then 
    \begin{align*} \sum_{i=1}^m  
          \cylnorm{\Deriv \RescaledMainMapping_{\RescalingParameter}^{\Iterated{n+1}} (\zeta_{V_i})}^{-t} &=
          \cylnorm{\Deriv \RescaledMainMapping_{\RescalingParameter}^{\Iterated{n}} (\zeta_U)}^{-t} \cdot
          \sum_{i=1}^m      
           \cylnorm{\Deriv \RescaledMainMapping_{\RescalingParameter}
                    (\zeta_{V_i})}^{-t} \\ &\leq
           \cylnorm{\Deriv \RescaledMainMapping_{\RescalingParameter}^{\Iterated{n}} (\zeta_U)}^{-t} \cdot
             \cylpartfun (t, f_{\lambda}, \zeta_U ) \leq 
           \frac{1}{2}\cdot \cylnorm{\Deriv \RescaledMainMapping_{\RescalingParameter}^{\Iterated{n}} (\zeta_U)}^{-t}.
          \end{align*}
  By induction,
    \[ 
       \sum_{U\in \mathcal{U}_n(Q)} 
         \cylnorm{\Deriv \RescaledMainMapping_{\RescalingParameter}^{\Iterated{n}} (\zeta_U)}^{-t} \leq 
         2^{-n}. \]
   Hence 
    \begin{align*} \sum_{U\in \mathcal{U}_n} \cyldiam(U)^t  &=
       \sum_{Q\in \CoverFamily}  \sum_{U\in \mathcal{U}_n(Q)} \cyldiam(U)^t \\ &\leq 
        C_1^t\cdot  \sum_{Q\in \CoverFamily} 
       \sum_{U\in \mathcal{U}_n(Q)} 
         \cylnorm{\Deriv \RescaledMainMapping_{\RescalingParameter}^{\Iterated{n}} (\zeta_U)}^{-t} \leq 
         C_1^t\cdot \# \CoverFamily \cdot 2^{-n} \to 0 \end{align*}
     as $n\to \infty$. 
    Thus $ \hausdorffdimension (X) \leq  \DimensionParameter $, and the first claim of the lemma is proved.

The proof of the second claim is very similar.
  Choose $\DimensionParameter>\vanishingexponent(f)$ and let $R_0 > 0$
 large enough so that $\cylpartfun (\DimensionParameter, f, w)\leq 1/2$
 for $|w|>R_0$.

Let $X$ be a hyperbolic set of $f$ which lies outside the disc of radius $R_0+1$. 
We now continue as above, and cover $X$ by a finite union $\CoverFamily$ of closed topological discs $Q$,
  for each of which there is a larger disc $\tilde{Q}$ that does 
  not intersect $D(0,R_0)$ and is contained in a neighbourhood of $X$ on which
  the map $f$ is expanding. The latter assumption  ensures that, while $\tilde{Q}$ may intersect the
  postsingular set, any component of
   $f^{-n}(\tilde{Q})$ that intersects $X$ lies outside $D(0,R_0)$ and is 
   mapped univalently to $\tilde{Q}$. The remainder of
  the proof proceeds as above,  and we conclude that
 $\hausdorffdimension X\leq \DimensionParameter$. 
  This proves the  second claim.
\end{proof}

\begin{cor}[Non-maximal hyperbolic dimension] \label{cor:finitess exponent and hypdim}
 If $\vanishingexponent(f) < 2$ and if $f$ is of disjoint type,
 then $\hyperbolicdimension(f)<2$. 
\end{cor}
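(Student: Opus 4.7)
My plan is to adapt the covering argument from the proof of the first claim of Lemma \ref{lem:evhypdim leq vanishing exp} so that it applies directly to $f$, rather than to a small rescaling $\lambda f$. Fix $t\in(\vanishingexponent(f),2)$. By the definition of the vanishing exponent together with Proposition \ref{prop: continuity of partition functions}, choose $R_0\geq 1$ with $\cylpartfun(t,f,w)\leq 1/2$ whenever $|w|\geq R_0$. The disjoint-type assumption on $f$ provides a radius $R>0$ with $S(f)\subset D(0,R/2)$ and $f(\overline{D(0,R)})\subset D(0,R/2)$, and consequently $\postsingular(f)\subset D(0,R/2)$ and $J(f)\subset\C\setminus\overline{D(0,R)}$.

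Given a hyperbolic set $X\subset J(f)$, I would cover $X$ by a finite collection of closed topological discs $Q$ lying in a bounded annulus outside $\overline{D(0,R)}$, each surrounded by a slightly larger neighbourhood $\tilde Q\subset \C\setminus \overline{D(0,R/2)}$ that avoids $\postsingular(f)$. Every component $U$ of $f^{-n}(\tilde Q)$ meeting $X$ is then mapped univalently onto $\tilde Q$, and Koebe's theorem yields $\cyldiam(U)\leq C\cdot \cylnorm{\Deriv f^n(\zeta_U)}^{-1}$ for a uniform constant $C$. Controlling $\sum_{U\in\mathcal{U}_n}\cylnorm{\Deriv f^n(\zeta_U)}^{-t}$ inductively via a uniform partition-function bound then produces the geometric decay $\sum_{U\in\mathcal{U}_n}\cyldiam(U)^t\to 0$, whence $\dim X\leq t$.

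The inductive step requires $\cylpartfun(t,f,\zeta_U)\leq 1/2$ at every intermediate point $\zeta_U$; this is guaranteed for $|\zeta_U|\geq R_0$, but the disjoint-type hypothesis only ensures $|\zeta_U|\geq R$, and it may well happen that $R<R_0$. Bridging this gap is the main obstacle. I would do so by passing to a sufficiently high iterate $F\defeq f^N$, which has the same Julia set, postsingular set and hyperbolic sets (hence the same hyperbolic dimension) as $f$, and showing that $\cylpartfun(t,F,w)\leq 1/2$ uniformly for $w\in J(f)$ once $N$ is large. This uses the chain-rule factorisation of the partition function together with the Eremenko--Lyubich estimate \eqref{eqn:eremenkolyubichestimate} and the tract structure of $f$: in each tract, only finitely many $f$-preimages of $w$ have modulus below $R_0$, so in the factorisation of $\cylpartfun(t,F,w)$ the contributions from chains passing through the bounded region $\{R<|z|<R_0\}$ can be dwarfed by the geometric decay coming from the remaining steps. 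Running the covering argument of the previous paragraph with $F$ in place of $f$ then gives $\hyperbolicdimension(f)=\hyperbolicdimension(F)\leq t<2$.
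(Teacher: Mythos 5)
Your reduction to the claim that $\cylpartfun(t,f^N,w)\leq 1/2$ uniformly for $w\in \juliaset(f)$ once $N$ is large is where the argument breaks, and this is not a repairable technicality: applied for every $t>\vanishingexponent(f)$, your scheme would prove $\hyperbolicdimension(f)\leq \vanishingexponent(f)$, which is strictly stronger than the corollary and is false in general. For instance, a disjoint-type exponential map $z\mapsto\lambda e^z$ (with $|\lambda|$ small) has balanced growth, so Proposition~\ref{prop:in mu class evhypdim is 1} gives $\vanishingexponent(f)=1$, while $\hyperbolicdimension(f)>1$ by \cite{BaranskiKarpinskaZdunik2009}. Concretely, the hand-wave ``contributions from chains passing through the bounded region can be dwarfed by the geometric decay coming from the remaining steps'' fails because the vanishing exponent only controls the one-step partition function at points of \emph{large} modulus, whereas a backward orbit in $\juliaset(f)$ can remain in the bounded region $\{|z|<R_0\}$ for arbitrarily many consecutive steps (repelling periodic points, and indeed whole hyperbolic sets, may lie there). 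On that part of the Julia set the relevant sums are governed by the local pressure, not by $\vanishingexponent(f)$: if $f$ has a hyperbolic set of dimension $>t$ meeting the bounded region, Lemma~\ref{lem:lower exp growth} produces, for $w$ in a suitable bounded domain $D$ intersecting $\juliaset(f)$, preimage sums with $\cylpartfun(t,f^{\nu_p},w)\geq C e^{\beta\nu_p}\to\infty$ along a subsequence of iterates --- exactly the opposite of the uniform bound you need, and the finiteness of the number of low-modulus preimages at each step does nothing to prevent this. (A secondary inaccuracy: disjoint type does not give a round absorbing disc with $f(\overline{D(0,R)})\subset D(0,R/2)$ and $\juliaset(f)\subset\C\setminus\overline{D(0,R)}$; conjugating any disjoint-type map by a translation puts $0$ on the Julia set.)

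The missing idea, which is how the paper proceeds, is that one should not run the covering argument for $f$ itself: the bound furnished by $\vanishingexponent(f)$ controls the whole Julia set only after rescaling. For $|\lambda|$ small, $\juliaset(\lambda f)$ lies entirely in the region where $\cylpartfun(t,\lambda f,\cdot)\leq 1/2$, so the first claim of Lemma~\ref{lem:evhypdim leq vanishing exp} yields $\hyperbolicdimension(\lambda f)\leq t<2$. This is then transferred back to $f$ geometrically rather than dynamically: since both $f$ and $\lambda f$ are of disjoint type, they are quasiconformally conjugate on neighbourhoods of their Julia sets by Theorem~3.1 of \cite{Rempe2009}, and by \cite{GehringVaisala1973} a quasiconformal map sends sets of Hausdorff dimension at most $t_0<2$ to sets of dimension at most some $t_1<2$. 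That is precisely why the conclusion of the corollary is only $\hyperbolicdimension(f)<2$, and not $\hyperbolicdimension(f)\leq\vanishingexponent(f)$.
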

\begin{proof}
By Lemma~\ref{lem:hyp dim eventually bounded by}, there is $\RescalingParameter<1$ such that
$\hyperbolicdimension(\RescaledMainMapping_\RescalingParameter)<2$.
Since both $f$ and $\RescaledMainMapping_\RescalingParameter$ are of disjoint type,
 it follows from Theorem 3.1 of \cite{Rempe2009}
 that these maps are quasiconformally conjugate on neighborhoods of their respective Julia sets.

Let $\DimensionParameter_0<2$ be an upper bound for the dimension of the hyperbolic sets of
 $\RescaledMainMapping_\RescalingParameter$. The hyperbolic sets of $f$ are
 the images of the hyperbolic sets of $f_{\lambda}$ under the above-mentioned quasiconformal conjugacy $\psi$.  
 By \cite[Theorem 12]{GehringVaisala1973},
 the dimensions of these images 
 are bounded by a constant $\DimensionParameter_1<2$
 depending only on $\DimensionParameter_0$ and the quasiconstant of $\psi$.
\end{proof}

\begin{remark}
  Suppose that $f$ is a disjoint-type entire function and $\vanishingexponent(f)<2$. 
    It is mentioned in \cite[Section~8]{MayerUrbanski2019}
    that various results from the theory of thermodynamic formalism,
    as developed in \cite{MayerUrbanski2008,MayerUrbanski2010} will carry over for $t>\vanishingexponent(f)$.
    In particular, if 
    $t_0\defeq \hyperbolicdimension(f) > 2$, then there is a probability
     measure that is conformal (with respect to the cylinder metric) and supported on
     the radial Julia set. In particular, it follows that $t_0<2$, giving an alternative
     proof of Corollary~\ref{cor:finitess exponent and hypdim}. 
     
   It is a natural question to ask whether always 
     $\hyperbolicdimension(f)>\eventualhyperbolicdimension(f)$ when $f$ is of 
     disjoint type with $\eventualhyperbolicdimension(f)<2$. 
     This was recently answered in the negative by 
     Mayer and Zdunik~\cite{MayerZdunik2019}, who show that there is a disjoint-type
     entire function $f$ with 
     $\eventualhyperbolicdimension(f) = \hyperbolicdimension(f) = \theta(f) < 2$. Moreover,
     there is no conformal (with respect to the cylinder or spherical metric) probability
     measure of exponent $\theta(f)$ that is supported on the radial Julia set. 
\end{remark}

\subsection*{A lower  bound on the eventual hyperbolic dimension}

 We now give  the details concerning the facts, stated in the  introduction, that
   the eventual hyperbolic dimension of a function  $f\in\classb$ is at least $1$, and that
   this lower bound is achieved by some of the functions studied by Mayer and Urba\'nski.

\begin{lem}[Eventual hyperbolic dimension in $\classb$]
  \label{lem:dim of bounded orbits at least 1}Let $f\in\classb$, and let 
   $R > 0$. Then 
  \[ \sup_X  \hausdorffdimension  (X) > 1, \]
  where the supremum is taken over all hyperbolic sets $X\subset \C\setminus D(0,R)$. 
   In particular, 
  \[ \vanishingexponent(f) \geq \eventualhyperbolicdimension(f) \geq 1. \]
\end{lem}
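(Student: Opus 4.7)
The plan is to establish the first assertion directly by reopening the construction of Bara\'nski, Karpi\'nska and Zdunik~\cite{BaranskiKarpinskaZdunik2009}; the ``in particular'' statement then follows by letting $R\to\infty$ in the first part and invoking Lemma~\ref{lem:evhypdim leq vanishing exp}.

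First I would pass to logarithmic coordinates. Fix $R'\geq \max(R,1)$ with $S(f)\cup\{f(0)\}\subset D(0,R'/2)$ and set $H=\{w\in\C:\realpart w>\log R'\}$. Let $F=\logtransform$ denote the logarithmic transform of $f$, whose domain is the union of the $\exp$-lifts $\widetilde T$ of the tracts of $f$ over $\C\setminus\overline{D(0,R')}$; each $\widetilde T$ is mapped conformally by $F$ onto $H$. The Eremenko-Lyubich estimate~\eqref{eqn:eremenkolyubichestimate} ensures that $|F'(w)|\to\infty$ as $\realpart F(w)\to\infty$, so inverse branches of $F$ are strongly contracting far to the right in $H$.

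Next I would rerun the BKZ construction inside a single lift $\widetilde T$, based at a round disc $D\subset H$ of some fixed radius whose centre has real part at least $\log R+1$. The argument of~\cite{BaranskiKarpinskaZdunik2009} produces, after possibly passing to a higher iterate of $F$, a finite collection of univalent inverse branches of $F^n$ on $D$ that form a disjoint iterated function system whose attractor $\widetilde X$ has Hausdorff dimension strictly greater than $1$. The set $X=\exp(\widetilde X)$ is then a hyperbolic set for $f$ of the same dimension, and since every $w\in\widetilde X$ has real part at least $\log R+1$, we have $X\subset\C\setminus D(0,R)$ as required.

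The only point to check carefully is that the BKZ bound $\hausdorffdimension>1$ persists when the base disc $D$ is translated to arbitrarily large real part. Their construction is essentially translation invariant in logarithmic coordinates: Koebe distortion is scale free, tract geometry is unchanged under horizontal translation, and the derivative bounds only improve as one moves further into $H$. I therefore expect this verification to be routine, amounting to a careful reading of~\cite{BaranskiKarpinskaZdunik2009} to extract those translation-invariant ingredients. Once the first assertion is proved, taking $R\to\infty$ yields $\eventualhyperbolicdimension(f)\geq 1$, and combined with Lemma~\ref{lem:evhypdim leq vanishing exp} this gives the full inequality $\vanishingexponent(f)\geq\eventualhyperbolicdimension(f)\geq 1$.
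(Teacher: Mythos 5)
Your proposal is correct and follows essentially the same route as the paper: reduce to the logarithmic tracts over $\C\setminus\overline{D(0,R)}$ (with $R$ large enough that $S(f)\subset D(0,R/2)$), obtain hyperbolic sets of dimension greater than $1$ arbitrarily far out from the construction of \cite{BaranskiKarpinskaZdunik2009}, and then deduce the second assertion from the definition of $\eventualhyperbolicdimension$ together with Lemma~\ref{lem:evhypdim leq vanishing exp}. The paper simply cites \cite{BaranskiKarpinskaZdunik2009} for the localisation far out in the tract, which is exactly the verification you sketch in logarithmic coordinates.
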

\begin{proof}
  \lrg{We may assume that $R$ is sufficiently large to ensure that $S(f)\subset D(0,R/2)$. 
    Then every component of $f^{-1}(\C\setminus \overline{D(0,R)})$ is a logarithmic
    tract of $f$ over infinity, and the first part follows from~\cite{BaranskiKarpinskaZdunik2009}.
 The second part is a direct consequence
  of the first, together with
   the definition of $\eventualhyperbolicdimension(f)$, and Lemma~\ref{lem:above ev hyp dim}.}
\end{proof}

  Let us use the vanishing exponent to justify the claim in the 
    introduction that entire functions $f\in\classb$ covered by the results of 
    \cite{MayerUrbanski2008} have eventual hyperbolic dimension $1$. 
  It is worth noting that \cite{MayerUrbanski2008} treats more than just entire functions.
  Their results cover finite-order meromorphic functions. In our work we restrict to the entire case.
      Furthermore, \cite{MayerUrbanski2008} does not explicitly restrict to functions in $\classb$, 
      although we are not aware of any known examples of entire functions outside of $\classb$ to which their results apply. 
    An entire function $f\in\classb$ of finite positive order $\rho$
     has \emph{balanced growth} in the sense of Mayer-Urba\'nski  if
     there exists $C > 1$
     such that for all $z\in \juliaset(f)$,
    \begin{equation}
      C^{- 1} \leq \frac{| f' (z) |}{(1 + | z |^{\rho - 1}) (1 + | f (z) |)}
      \leq C \label{eq:balanced-growth}
    \end{equation}
(see \cite[Lemma 3.1]{MayerUrbanski2008}).

  \begin{prop}[Eventual hyperbolic dimension and balanced growth]
    \label{prop:in mu class evhypdim is 1}
    Let $f$ be a class $\classb$ entire
    function of finite order $ \rho$.
    Suppose that $f$ has balanced
    growth in the sense of Mayer-Urba{\'n}ski.
    Then
    $\eventualhyperbolicdimension(f)  = \theta(f) = 1 < \hypdim(f)$.
  \end{prop}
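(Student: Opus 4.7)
The lower bound $\hyperbolicdimension(f) > 1$ is the theorem of Bara\'nski--Karpi\'nska--Zdunik \cite{BaranskiKarpinskaZdunik2009} already cited after Definition~\ref{defn:eventualhyperbolicdimension}, and the chain $1 \leq \eventualhyperbolicdimension(f) \leq \theta(f)$ is immediate from Lemma~\ref{lem:evhypdim leq vanishing exp} combined with the lower bound in Lemma~\ref{lem:dim of bounded orbits at least 1}. The proof therefore reduces to establishing the upper bound $\theta(f) \leq 1$, from which the two equalities in the statement follow by sandwiching.

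To establish $\theta(f) \leq 1$, I would fix an arbitrary $t > 1$ and show that the cylindrical partition function
\[ \cylpartfun(t, f, w) = \sum_{z \in f^{-1}(w)} \left( \frac{|w|}{|z|\,|f'(z)|} \right)^{t} \]
vanishes as $|w| \to \infty$. First, the balanced-growth bound~\eqref{eq:balanced-growth} implies that for $z \in \juliaset(f)$ with $|z|$ and $|f(z)|$ both large,
\[ |f'(z)| \geq \frac{1}{4C}\, \max(1, |z|^{\rho-1}) \cdot |f(z)|, \]
so each summand is dominated by a constant multiple of $|z|^{-\alpha}$ with $\alpha \defeq t \cdot \max(1, \rho) > \rho$. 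Second, standard Nevanlinna theory provides, for the order-$\rho$ entire function $f$, a counting-function bound $n(r, w) \defeq \#\{z \in f^{-1}(w) : |z| \leq r\} = O(r^{\rho})$. A Stieltjes integration-by-parts estimate then yields
\[ \sum_{z \in f^{-1}(w)} |z|^{-\alpha} \leq \frac{C'}{\alpha-\rho}\, r_{0}(w)^{\rho - \alpha}, \]
where $r_{0}(w) \defeq \min\{|z| : z \in f^{-1}(w)\}$. Since $f \in \classb$, preimages of large values of $w$ lie in the logarithmic tracts over infinity and therefore satisfy $r_{0}(w) \to \infty$ as $|w| \to \infty$; as $\alpha > \rho$, the right-hand side vanishes, giving $\cylpartfun(t, f, w) \to 0$ and hence $\theta(f) \leq t$. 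Letting $t \searrow 1$ concludes $\theta(f) \leq 1$.

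The main obstacle is that~\eqref{eq:balanced-growth} is postulated only on $\juliaset(f)$, whereas the sum defining $\theta(f)$ ranges over \emph{all} preimages of $w$. I plan to address this by observing that balanced growth combined with $f \in \classb$ of finite positive order forces the postsingular set of $f$ to be compact (so that $f$ is hyperbolic in the sense of Definition~\ref{def: hyperbolic function}). Consequently every $w$ of sufficiently large modulus lies in $\juliaset(f)$, and by backward invariance so does every preimage of such a $w$; the balanced-growth bound therefore applies to every term in the sum, and combining this with the derivative estimate, the Nevanlinna counting bound, and the integration-by-parts step completes the proof.
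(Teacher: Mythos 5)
Your reduction to showing $\vanishingexponent(f)\leq 1$ is the right frame, and the first half of your upper-bound argument is sound in spirit: the pointwise bound (summand $\lesssim |z|^{-t\max(1,\rho)}$ from~\eqref{eq:balanced-growth}) plus a counting-function estimate is a reasonable, more self-contained substitute for the paper's citation of \cite[Proposition~4.5]{MayerUrbanski2008}, although you should write $n(r,w)=O(r^{\rho+\epsilon})$ rather than $O(r^{\rho})$ and check uniformity in $w$ (harmless, since $t\max(1,\rho)>\rho$ strictly and $\log|w|\leq\log M(r_0(w),f)$). The lower bounds via \cite{BaranskiKarpinskaZdunik2009} and Lemmas~\ref{lem:dim of bounded orbits at least 1} and~\ref{lem:evhypdim leq vanishing exp} are as in the paper.

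The genuine gap is your resolution of the ``main obstacle''. Neither implication you invoke is valid. Balanced growth together with $f\in\classb$ and finite positive order does not force hyperbolicity: \eqref{eq:balanced-growth} constrains $|f'|$ only along $\juliaset(f)$ and says nothing about the singular orbits, and even a compact postsingular set is not hyperbolicity, which requires $\postsingular(f)\subset\fatouset(f)$. More fatally, even for hyperbolic $f\in\classb$ it is false that every $w$ of sufficiently large modulus lies in $\juliaset(f)$: for a disjoint-type map such as $f(z)=\lambda e^z$ with $|\lambda|$ small (which does satisfy \eqref{eq:balanced-growth}, since $|f|\geq R$ on $\juliaset(f)$) the Fatou set is unbounded and meets every circle $\{|w|=r\}$ for all large $r$, so the Julia set never contains a neighbourhood of infinity. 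Hence for a general large $w$ the preimages $z\in f^{-1}(w)$ need not lie in $\juliaset(f)$, the balanced-growth lower bound on $|f'(z)|$ is unavailable there, and your sum over the infinitely many preimages is uncontrolled. The paper bridges exactly this gap differently: it first proves $\cylpartfun(\tilde t,f,w)\to 0$ along $w\in\juliaset(f)$ (legitimate, since preimages of Julia points are Julia points by complete invariance), and then transfers the estimate to arbitrary large $w$ via Lemma~\ref{lem:partitiondistortion}~-- a Koebe-distortion argument in a logarithmic transform showing that $\cylpartfun(\tilde t,f,w)$ and $\cylpartfun(\tilde t,f,\tilde w)$ are comparable whenever $|w|=|\tilde w|$ is large~-- combined with the fact that every component of $\juliaset(f)$ is unbounded for $f\in\classb$, so every large circle does contain a Julia point. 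Your argument needs this (or an equivalent) transfer step in place of the hyperbolicity claim.
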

\begin{proof}
  By Lemma~\ref{lem:dim of bounded orbits at least 1}, it only remains to show that
   $\vanishingexponent(f)\leq 1$. \lrg{The} balanced growth condition~{\eqref{eq:balanced-growth}}
    means precisely that $\cylnorm{\Deriv f(z)}$ is comparable to 
      $\max(|z|,1)^{\rho}$ for $z\in \juliaset(f)$, where $ \rho $ is the order of growth of $ f $.
      So 
  \begin{equation}\label{eqn:comparison}
      \lrg{\cylpartfun(t,f,w)=}
     \familysum{z \in f^{- 1} (w)}{} \cylnorm{\Deriv f (z)}^{- t} \approx
     \familysum{z \in f^{- 1} (w)}{} \frac{1}{| z |^{\rho t}}, \end{equation}
    for sufficiently large $w\in J(f)$.
     (Here $\approx$ means that the two sides are uniformly comparable.)

    Fix $\tilde{t}>t>1$. By~{\cite[Proposition~4.5]{MayerUrbanski2008}}, the second sum 
        in~\eqref{eqn:comparison} remains bounded
      as  $w\to\infty$. Hence
      \lrg{$ \sum_{z \in f^{- 1} (w)} 1/| z |^{\rho \tilde{t}} $}
    tends to zero as $w\to\infty$. 
    We conclude that 
    \begin{equation} \label{eq: first partition function converges to 0 in the Julia set}
         \lrg{\cylpartfun(\tilde{t},f,w)}\to 0,
     \end{equation}
     at least as $ w \to \infty $ inside $ J (f) $. Since $f\in\classb$, 
      every component of  $J(f)$ is unbounded
      (\cite[Proposition 2]{EremenkoLyubich1992}, \cite[Corollary~3.11]{RempeGillen2018}). 
      A simple application of Koebe's distortion theorem~-- which we provide below in
      Lemma~\ref{lem:partitiondistortion} for completeness~-- shows that
      $\cylpartfun(\tilde{t},f,w))$ and $\cylpartfun(\tilde{t},f,\tilde{w}))$ are
      comparable when $\lvert w \rvert = \lvert \tilde{w}\rvert$ and
      this modulus is sufficiently large. Therefore~\eqref{eq: first partition function converges to 0 in the Julia set} also holds for the unrestricted limit $w\to\infty$, and 
     $ \vanishingexponent(f) \leq \tilde{t}$.
     As  $\tilde{t}>1$ was arbitrary,
      the proof is complete.
\end{proof}

\begin{lem}[Distortion bounds on partition functions]\label{lem:partitiondistortion}
  Let $f\in\classb$ and $C>1$. Then there is $R>0$ such that 
    \[ \frac{\cylpartfun(t,f,w)}{\cylpartfun(t,f,\tilde{w})} \leq C^t \]
   whenever $t\geq 0$ and whenever $\lvert w\rvert = \lvert \tilde{w}\rvert \geq R$. 
\end{lem}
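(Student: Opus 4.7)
The plan is to apply Koebe's distortion theorem to the inverse branches of $f$, transported to logarithmic coordinates in which the cylindrical metric becomes Euclidean. Since $f\in\classb$, I would fix $R_S > 1$ with $S(f)\subset D(0,R_S)$ and $|f(0)|<R_S$, set $W = \C\setminus\overline{D(0,R_S)}$, and let $\hat W = \{\zeta\in\C : \realpart\zeta > \log R_S\}$ be the universal cover of $W$ via $\exp$. Each tract $T$ of $f^{-1}(W)$ is then simply connected and does not contain $0$, and $f|_T : T \to W$ is a universal cover; thus $f|_T$ lifts to a conformal isomorphism $\bar f : T \to \hat W$ with $\exp\circ\bar f = f|_T$, and $\log$ admits a single-valued branch on $T$. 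A direct chain-rule computation yields
\[ \abs{(\log\circ\bar f^{-1})'(\hat w)} = \cylnorm{\Deriv f(z)}^{-1}, \qquad z := \bar f^{-1}(\hat w), \]
so distortion bounds on $\log\circ\bar f^{-1}$ in $\hat W$ translate directly into comparisons of cylindrical derivatives of $f$ at corresponding preimages.

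Given $C>1$, I would choose $\rho_0 > \pi$ such that the Koebe distortion constant $K(s) := (1+s)/(1-s)^3$ satisfies $K(\pi/\rho_0) \leq C$, and then set $R := R_S \cdot e^{\rho_0 + 1}$. For any $w,\tilde w$ with $|w|=|\tilde w| \geq R$, pick lifts $\hat w, \hat{\tilde w}\in\hat W$ differing by a purely imaginary $i\theta$ with $|\theta| \leq \pi$ (possible because $|w|=|\tilde w|$). For each tract $T$ and each preimage $z \in T\cap f^{-1}(w)$, the lift $\hat w_z := \bar f(z)$ satisfies $\realpart\hat w_z = \log|w| \geq \log R_S + \rho_0 + 1$, so the Euclidean disk $D(\hat w_z,\rho_0)$ lies inside $\hat W$ and $\log\circ\bar f^{-1}$ is univalent there. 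Setting $\tilde z := \bar f^{-1}(\hat w_z + i\theta) \in T\cap f^{-1}(\tilde w)$ and applying Koebe on the sub-disk $D(\hat w_z,\pi)\subset D(\hat w_z,\rho_0)$ would yield
\[ C^{-1} \leq \frac{\cylnorm{\Deriv f(\tilde z)}^{-1}}{\cylnorm{\Deriv f(z)}^{-1}} \leq C. \]

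To finish, the assignment $z\mapsto\tilde z$ is a bijection between $T\cap f^{-1}(w)$ and $T\cap f^{-1}(\tilde w)$ for each tract $T$, because both sets are parameterised via $\bar f^{-1}$ by the $2\pi i\integers$-orbits of $\hat w$ and $\hat{\tilde w}$ in $\hat W$, and translation by $i\theta$ matches these orbits. Raising the displayed inequality to the power $t$ and summing over all tracts and all preimages then gives $\cylpartfun(t,f,\tilde w) \leq C^t\,\cylpartfun(t,f,w)$, and symmetrically the reverse bound. The main technical points that require care are the chain-rule identity $\abs{(\log\circ\bar f^{-1})'(\hat w)} = \cylnorm{\Deriv f(z)}^{-1}$ (which relies on having $\log$ single-valued on each tract) and the verification that the lift-based matching is indeed a bijection; once these are in place, the estimate follows from the fact that the cylindrical circumference of any circle $|y|=r$ is the constant $2\pi$ independent of $r$, so taking $R$ (and hence $\rho_0$) large drives the Koebe factor arbitrarily close to $1$.
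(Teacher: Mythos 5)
Your argument is correct and is essentially the paper's own proof: both pass to the logarithmic transform $\Phi^{-1}=\log\circ\bar f^{-1}$ on the half-plane $\log W$, use the identity $\abs{(\log\circ\bar f^{-1})'}=\cylnorm{\Deriv f(z)}^{-1}$, apply Koebe distortion on a disc whose radius grows like $\log\lvert w\rvert-\log R_S$ while the displacement between the two lifts is at most $\pi$, and match $f^{-1}(w)$ with $f^{-1}(\tilde w)$ termwise (your translation of the $2\pi i\integers$-orbits of the lifts is the same bijection the paper obtains by analytic continuation along a circular arc). The only cosmetic difference is your explicit choice of $\rho_0$ and $R=R_Se^{\rho_0+1}$ in place of the paper's qualitative ``$C(\lvert w\rvert)\to 1$'', and since the statement is symmetric in $w,\tilde w$, the one-sided Koebe bound you quote suffices.
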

\begin{proof}
  Let $R_0 > 0$ so large that $S(f)\subset D( 0, R_0 / 2 )$ and $\lvert f(0)\rvert <R_0$. 
    Let $w,\tilde{w}\in\C$ with $\lvert w\rvert  = \lvert \tilde{w} \rvert \geq R_0 + 4\pi$.    Let $\gamma$ be an arc of the circle $\partial D(0,\lvert w\rvert)$ 
    connecting $w$ to $\tilde{w}$. For each
    $z \in f^{-1}(w)$, let  $\tilde{z}$ be the element of  $f^{-1}(\tilde{w})$ obtained from  $z$ by
    analytic continutation of $f^{-1}$ along $\gamma$. Observe that this defines a 
    bijection between $f^{-1}(w)$ and $f^{-1}(\tilde{w})$. By the definition 
    of the partition function, it is enough to show that 
     \begin{equation}\label{eqn:distortionC} \frac{\cylnorm{\Deriv f (z)}}{\cylnorm{\Deriv f(\tilde{z})}} \leq C \end{equation}
     when $\lvert w\rvert $ is sufficiently large. 

  This follows by applying Koebe's theorem to a \emph{logarithmic transform} of $f$.
    More precisely, set $W\defeq\C\setminus \overline{D(0,R_0)}$, let
    $T$ be the connected component of  $f^{-1}(W)$ containing
     $z$ and $\tilde{z}$, and let $V$ be a connected component of
     $f^{-1}(T)$. Recall that $0\notin T$, so $V$ is a simply-connected domain,
      and $f\circ \exp \colon V\to W$ is a universal covering. So there is a conformal isomorphism
     $\Phi\colon V\to H\defeq \log W$ such that $\exp\circ \Phi = f\circ \exp$. 
     
     Let $\zeta,\tilde{\zeta}\in V$ with $\exp(\zeta)=z$, $\exp(\tilde{\zeta})=\tilde{z}$.
       Then 
          \[ \cylnorm{\Deriv f (z)} = |(\Phi^{-1})'(\zeta)|, \]
        and similarly for $\tilde{\zeta}$. Also observe that 
        $\re \Phi(\zeta) = \re \Phi(\tilde{\zeta}) = \log |w|$, and 
        $|\im \Phi(\zeta) - \im \phi(\tilde{\zeta})| < 2\pi \leq (\lvert w\rvert-R_0)/2 $. 
        By Koebe's distortion theorem, applied to the restriction of
          $\Phi^{-1}$ to the disc $D(\Phi(\zeta), \lvert w\rvert-R_0)$, 
          we see that~\eqref{eqn:distortionC} holds
          with $C$ replaced by 
          a constant $C(\lvert w\rvert)$ that tends to $1$ as $\lvert w\rvert \to\infty$. 
          This completes the proof. 
\end{proof}

\subsection*{Affine and quasiconformal equivalence classes}

Following \cite{RempeStallard2010}, we show that, in $\classb$, the eventual hyperbolic dimension is
 invariant under  affine equivalence.

\begin{thm}\label{thm:affine equiv same evhypdim} 
Let $f,g\in\classb$ be affinely equivalent. Then
    $\evhypdim(f)=\evhypdim(g)$ and $\vanishingexponent(f)=\vanishingexponent(g)$. 
\end{thm}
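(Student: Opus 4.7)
Write $\varphi(z) = az + b$ and $\psi(z) = cz + d$ with $a, c \neq 0$. Two immediate consequences of the functional equation $\psi \circ f = g \circ \varphi$ drive both assertions: for each $w \notin S(f)$, $\varphi$ restricts to a bijection $f^{-1}(w) \to g^{-1}(\psi(w))$, and differentiating gives $g'(\varphi(z)) = (c/a)\,f'(z)$.

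\textbf{Invariance of $\vanishingexponent$.} Substituting these into the definition of the one-sided cylindrical partition function, a direct computation yields, for $|w| > 1$,
\begin{equation*}
\mcylpartfun(t, g, \psi(w)) \;=\; \sum_{z \in f^{-1}(w)} R(w,z)^{t}\,\mcylnorm{\Deriv f(z)}^{-t}, \qquad R(w,z) := \frac{|\psi(w)|\,|a|\,\max(|z|,1)}{|c|\,\max(|az+b|,1)\,|w|}.
\end{equation*}
Since $|\psi(w)|/(|c||w|) \to 1$ as $|w| \to \infty$, it is enough to show the factor $|a|\max(|z|,1)/\max(|az+b|,1)$ stays in a fixed positive interval in~$z$. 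A routine case analysis (splitting according to whether $|z| \leq 1$ and whether $|az+b| \leq 1$) confirms this with bounds depending only on $a, b$. Hence $\mcylpartfun(t, g, \psi(w))$ and $\mcylpartfun(t, f, w)$ are comparable up to multiplicative constants for $|w|$ large, and since $\psi$ is a bijection of~$\C$, the characterisation of $\vanishingexponent$ via Lemma~\ref{lem:hypdim criterion} and Remark~\ref{rem:vanishing exponent and metric, vanishing exponent and class b}(c) gives $\vanishingexponent(f) = \vanishingexponent(g)$.

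\textbf{Invariance of $\evhypdim$.} Here we mirror the argument of~\cite{RempeStallard2010} for the escaping set. Setting $A := \varphi^{-1} \circ \psi$, one verifies the conjugacy $\varphi \circ (A \circ f) = g \circ \varphi$; since affine conjugation is bi-Lipschitz and preserves the condition of lying far from the origin (up to adjusting~$R$), it suffices to prove $\evhypdim(f) = \evhypdim(A \circ f)$ for an arbitrary affine map~$A$. Given a hyperbolic set $K$ for $A \circ f$ with $\inf_K |z| \geq R$, inverse branches of $(A \circ f)^n$ decompose as alternating compositions of $f^{-1}$-branches with the fixed affine map $A^{-1}$. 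The derivative comparison from the first part (applied to the equivalence between $f$ and $A \circ f$) shows that, per orbit step, the cylindrical contraction rates of these branches agree with those of $f^{-1}$-branches alone up to a bounded multiplicative factor. This transfers the iterated function system structure of $K$ to an IFS built from $f^{-1}$-branches with attractor of the same Hausdorff dimension and lying far from the origin. Swapping the roles of $f$ and $g$ gives the reverse inequality, completing the proof.

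\textbf{Main obstacle.} The delicate point in the second half is controlling the effect of the fixed affine factor $A^{-1}$ inserted at each iterate of $(A \circ f)^n$. The Eremenko--Lyubich estimate~\eqref{eqn:eremenkolyubichestimate} is what makes this possible: cylindrical derivatives of $f$ grow without bound along orbits lying in the logarithmic tracts, so the per-step factor $|A'|^{-1}$ is dominated and becomes negligible upon taking $n$-th roots of products of derivatives. Combined with the bi-Lipschitz nature of the affine maps---hence their preservation of Hausdorff dimension---this guarantees that the IFS attractors produced on the $f$-side and the $(A \circ f)$-side have the same dimension, yielding the equality $\evhypdim(f) = \evhypdim(g)$.
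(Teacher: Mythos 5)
Your first half is fine: the invariance of $\vanishingexponent$ really is a direct computation with the fibre bijection $z\mapsto\ph(z)$ and the identity $g'(\ph(z))=(c/a)f'(z)$, and the paper itself dismisses this part as immediate from the definition, so your elaboration is consistent with it. The problem is the second half. The entire difficulty of the theorem is concentrated in comparing hyperbolic sets of $f$ and of $A\circ f$ \emph{near infinity}, and your proposal handles it with the sentence ``this transfers the iterated function system structure of $K$ to an IFS built from $f^{-1}$-branches with attractor of the same Hausdorff dimension'', which is asserted rather than proved and does not work as stated. Concretely: the inverse branches of $(A\circ f)^{n}$ are compositions $f^{-1}\circ A^{-1}\circ f^{-1}\circ\cdots\circ A^{-1}$; if you delete the interspersed copies of $A^{-1}$ you change the points at which the next branch must be applied, and there is no reason the corresponding branches of $f^{-1}$ exist on the shifted domains, nor that the resulting compositions form an IFS whose limit set is forward invariant under $f$. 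A hyperbolic set of $A\circ f$ is simply not a union of $f$-orbits, so producing genuine $f$-orbits ``shadowing'' the $(A\circ f)$-orbits is an honest expansivity/shadowing argument in the logarithmic coordinates --- and that is essentially the content of the rigidity theorem the paper quotes. Moreover, even granting two combinatorially identical IFSs whose per-step derivatives agree up to a bounded multiplicative factor, ``attractor of the same Hausdorff dimension'' is false in general: a bounded per-step error shifts the pressure by an additive constant per iterate and hence moves its zero. Your ``main obstacle'' paragraph correctly identifies that the Eremenko--Lyubich estimate~\eqref{eqn:eremenkolyubichestimate} should make this shift negligible as $R\to\infty$, but that quantitative pressure-perturbation step, like the shadowing step, is nowhere carried out; at best it would give dimensions differing by $o(1)$ as $R\to\infty$, and you would need to set the argument up so that this suffices.

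For comparison, the paper's proof outsources exactly these two difficulties to known results: by \cite[Corollary~2.2]{RempeStallard2010} (see also \cite{Rempe2009}), for every $K>1$ there is an $R$ and a $K$-quasiconformal map $\theta$ with $\theta\circ f=g\circ\theta$ on $\juliaset_R(f)=\{z:\ |f^{\iterated n}(z)|\geq R\ \text{for all }n\}$; this map carries hyperbolic sets of $f$ far from the origin to hyperbolic sets of $g$ far from the origin (expansion of the image set again via~\eqref{eqn:eremenkolyubichestimate}), and the Gehring--V\"ais\"al\"a bound on dimension distortion under $K$-quasiconformal maps gives $\evhypdim(f)\leq K\,\evhypdim(g)$, with $K\to1$ yielding equality. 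So either quote that rigidity statement (in which case your reduction to $f$ versus $A\circ f$ becomes unnecessary), or be prepared to reprove its special case: construct the correspondence between orbits of $f$ and of $A\circ f$ in $\juliaset_R$ by an expansion/fixed-point argument and control the resulting distortion uniformly in $R$. As it stands, the step from ``derivatives comparable per step'' to ``same dimension of the transferred hyperbolic set'' is a genuine gap.
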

\begin{proof}
  The second claim  is immediate from the definition. We show that 
    $\eventualhyperbolicdimension(g)\geq\eventualhyperbolicdimension(f)$; the first claim then follows trivially.
  
  Let $K>1$. By \cite[Corollary~2.2]{RempeStallard2010} 
  (see also \cite[Section~3]{Rempe2009}), there exist
  $R>0$ and a $K$-quasiconformal map $\theta:\complexnumbers\rightarrow\complexnumbers$
  such that
  \begin{equation}\label{eq:qc conj for affine equivalence}
    \theta(f(z)) = g(\theta(z))
  \end{equation}
  for all \lrg{$z\in \juliaset_R(f)=\left\{z\in\juliaset\colon |f^{\iterated n}(z)|\geq R \text{ for all $n\geq0$}\right\}$}.

Let $S>0$ and suppose $R>0$ is large enough, so that $|\theta(z)|\geq S$ for all $|z|\geq R$.
From the conjugacy \eqref{eq:qc conj for affine equivalence}, it follows that
  if $X\subset\juliaset_R(f)$ is an hyperbolic subset for $f$,
  then $Y\defeq\theta(X)$ is a compact subset of $\juliaset_S(g)$
  which is forward invariant by $g$.
  \lrg{By~\eqref{eqn:eremenkolyubichestimate}, $Y$ is a hyperbolic set for $g$, assuming $S$ was chosen
   sufficiently large.}

 From Theorem 8 of \cite{GehringVaisala1973}, one deduces the following inequality:
   \adz{\begin{equation*}
    \sup \left\{ \hausdorffdimension (X)\colon X\subset\juliaset_R(f)\mbox{ hyperbolic}\right\}
    \leq K \sup\left\{ \hausdorffdimension (Y)\colon Y\subset\juliaset_S(g)\mbox{ hyperbolic}\right\}.
  \end{equation*}}
  This implies that \adz{$\eventualhyperbolicdimension(f)\leq K \eventualhyperbolicdimension(g)$}.
  Since $K$ can be chosen arbitrarily close to $1$, the result follows.
\end{proof}

 As stated in Corollary~\ref{cor:noninvariance of evhypdim inside qc classes}, \lrg{the
   eventual  hyperbolic dimension} is \emph{not} invariant under quasiconformal equivalence,  
   and the proof will show that the same is true for the  vanishing exponent. 
    However, the condition of having \emph{full} eventual hyperbolic dimension is preserved under
    quasiconformal equivalence.  

\begin{prop}[Quasiconformal equivalence and eventual hyperbolic dimension]
  Let $f,g\in \classb$ be quasiconformally equivalent. If $\eventualhyperbolicdimension(f)=2$, then
    $\eventualhyperbolicdimension(g)=2$. 
\end{prop}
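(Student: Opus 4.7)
The strategy is to mirror the proof of Theorem~\ref{thm:affine equiv same evhypdim}, replacing the crude estimate $\hausdorffdimension(X)\leq K\,\hausdorffdimension(\theta(X))$ used there by a sharper Gehring--V\"ais\"al\"a bound that preserves \emph{full} Hausdorff dimension under planar quasiconformal maps. By symmetry of quasiconformal equivalence, it suffices to produce hyperbolic sets of $g$ escaping to infinity whose dimensions tend to $2$.

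First, I would appeal to \cite[Corollary~2.2]{RempeStallard2010} to obtain a constant $R_0>0$ and a $K$-quasiconformal homeomorphism $\theta\colon\C\to\C$ such that $\theta\composedwith f=g\composedwith \theta$ on $\juliaset_{R_0}(f)$, with $|\theta(z)|\tendsto\infty$ as $|z|\tendsto\infty$. The key difference from the affine case is that the dilation $K\geq 1$ is now determined by the given quasiconformal equivalence and \emph{cannot} be taken arbitrarily close to~$1$. Next, given an arbitrary $S>0$, I would enlarge $R_0$ so that $|\theta(z)|\geq S$ whenever $|z|\geq R_0$, and so that the Eremenko--Lyubich estimate~\eqref{eqn:eremenkolyubichestimate} applied to $g$ guarantees uniform cylindrical expansion outside $D(0,S)$. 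Then, exactly as in Theorem~\ref{thm:affine equiv same evhypdim}, for any hyperbolic set $X\subset\juliaset_{R_0}(f)$ the image $Y\defeq\theta(X)\subset\juliaset_S(g)$ is compact, forward-invariant under $g$, and uniformly expanding, hence a hyperbolic set for $g$ lying outside $D(0,S)$.

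The crucial new input is the sharp planar Gehring--V\"ais\"al\"a inequality \cite{GehringVaisala1973}: for any $K$-quasiconformal $\theta\colon\C\to\C$ and any bounded Borel $X\subset\C$,
\[
  \hausdorffdimension(\theta(X))\geq\frac{2\,\hausdorffdimension(X)}{2+(K-1)\p{2-\hausdorffdimension(X)}},
\]
whose right-hand side tends to $2$ as $\hausdorffdimension(X)\tendsto 2$. Since $\eventualhyperbolicdimension(f)=2$, for every $R>0$ and every $\varepsilon>0$ there is a hyperbolic set $X\subset\juliaset_R(f)$ with $\hausdorffdimension(X)>2-\varepsilon$; combining this estimate with the transfer step produces hyperbolic sets $Y\subset\juliaset_S(g)$ with $\hausdorffdimension(Y)$ arbitrarily close to $2$, for $S$ arbitrarily large, so that $\eventualhyperbolicdimension(g)=2$.

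The principal obstacle is precisely the dimension transfer in the final step: the coarse bound $\hausdorffdimension(Y)\geq \hausdorffdimension(X)/K$ that was sufficient in the affine setting gives here only $\eventualhyperbolicdimension(g)\geq 2/K<2$. One must instead exploit the strictly two-dimensional phenomenon that planar $K$-quasiconformal maps preserve full Hausdorff dimension in the uniform, quantitative sense above. Once this ingredient is invoked, everything else is a direct adaptation of the affine case, including the minor bookkeeping needed to verify that the Eremenko--Lyubich estimate makes $\theta(X)$ automatically hyperbolic for $g$ once $S$ is large enough.
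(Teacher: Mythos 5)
Your argument is correct and follows essentially the same route as the paper: transfer hyperbolic sets of $f$ lying far from the origin to hyperbolic sets of $g$ via the quasiconformal conjugacy near infinity, then invoke Gehring--V\"ais\"al\"a to see that full Hausdorff dimension survives the transfer. The only cosmetic differences are that in the genuinely quasiconformal (non-affine) setting the conjugacy should be quoted from \cite[Theorem~1.1]{Rempe2009} rather than \cite[Corollary~2.2]{RempeStallard2010} (so $K$ is indeed fixed, as you note), and that the paper applies the statement that planar quasiconformal maps preserve sets of Hausdorff dimension $2$ \cite[Corollary~13]{GehringVaisala1973} to the union of all such hyperbolic sets, whereas you use a quantitative distortion bound set by set; both yield the same conclusion.
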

\begin{proof}
  This follows again from~\cite{Rempe2009}. \lrg{We begin as in the proof of Theorem~\ref{thm:affine equiv same evhypdim}. 
    The quasiconformal map  $\theta$ still exists in this setting \cite[Theorem~1.1]{Rempe2009}, 
    but it is no longer necessarily true that $K$ can be chosen arbitrarily close to $1$.
     Let $S$ and $R$ be as in the above proof, and let 
     $\mathcal{X}$ be the union of all hyperbolic sets of $f$ contained in  $J_R(f)$. 
     Then $\mathcal{Y}\defeq \theta(\mathcal{X})\subset J_S(g)$ is a union of hyperbolic sets of $g$.}

   \lrg{Since $\eventualhyperbolicdimension(f)=2$, we have
     $\dim(\mathcal{X})=2$ by definition. As quasiconformal maps preserve sets  of Hausdorff
     dimension $2$ \cite[Corollary~13]{GehringVaisala1973}, it follows that
      $\dim(\mathcal{Y})=2$. Since $S$ can be chosen
      arbitrarily large, we conclude that $\eventualhyperbolicdimension(g)=2$, as claimed.}
\end{proof} 

\begin{remark} \label{rem:is maximal vanishing exponent preserved under qc equiv}
  It is natural to ask whether the condition $\theta(f)=2$ is also preserved under quasiconformal equivalence.
 It seems plausible that this is the case, at least for functions with a finite set of singular values.
 Observe that this question is similar in spirit to~\cite[Proposition 4.2]{EpsteinRempeGillen2015}; we leave it aside
   as it will not be required for the purposes of this paper. 
\end{remark}

\section{Poincar\'e functions}

 In this section, we review some basic facts about the mapping properties of Poincar\'e functions.
Recall that a Poincar{\'e} function, or lineariser, is an entire solution of the equation
$\poincarefunction ( \multiplier z ) = f ( \poincarefunction(z) )$,
 with $\poincarefunction (0)=\xi_0 $.
Here $f$ is an entire function with a repelling fixed point $\fixedpoint$ with multiplier $\multiplier$.
 If one fixes the value of $\poincarefunction'(0)$ to some non zero complex number,
 then the solution is unique.
 The unique Poincar{\'e} function such that $\poincarefunction'(0)=1$ is called
 the \emph{normalised Poincar{\'e} function} of $f$
  at $\fixedpoint$ .

\begin{rem}[Normalisation] \label{rem:compute evhypdim only for normalised poincare functions}
\adz{Let $ f, \xi_0, L $ be as above and let $ \ph $ be a nonconstant affine map and $ \lambda \in \nonzerocomplexnumbers $.
 Then the mapping $ f_1 \defeq \ph \composedwith f \composedwith \ph^{ - 1 } $
 has a repelling fixed point at $ \xi_1 \defeq \ph (\xi_0) $
 with multiplier $ \rho \defeq f' (\xi_0) $.
 Then the function $ L_1 (z) \defeq \ph \composedwith L (\lambda z) $
 is a Poincar{\'e} function for $ (f_1, \xi_1) $.
    In particular, if the postsingular  set of $f$ (and hence of $f_1$) is bounded, then 
    it follows from Theorem \ref{thm:affine equiv same evhypdim} that
     $\eventualhyperbolicdimension(\poincarefunction_1)= \eventualhyperbolicdimension(\poincarefunction)$ and
     $\theta(\poincarefunction_1) = \theta(\poincarefunction)$. 

     For example if  $f$ and  $\fixedpoint$ are as above and $\poincarefunction$ is a Poincar\'e function of
     $f$ at $\fixedpoint$, then \adz{
     \[ z\mapsto L \p{\frac{z}{L'(0)}} - \fixedpoint \]}
     is  the normalised Poincar\'e function of $z\mapsto f(z+\fixedpoint)-\fixedpoint$ at $0$. 
    Therefore, in the  following, we can usually assume that $\fixedpoint=0$ and that a given 
    Poincar\'e function is normalised. }
\end{rem}

\begin{rem}[Quasiconformal equivalence of linearisers]\label{rem:qc conj implies qc equiv of lin}
  Similarly, 
   if two entire functions are quasiconformally conjugate, then their respective Poincar{\'e} functions
  are quasiconformally equivalent\adz{, see \cite[Proposition 3.2]{EpsteinRempeGillen2015}}.
   This will be important in Corollary~\ref{cor:noninvariance of evhypdim inside qc classes}.
\end{rem}

 Let $\poincarefunction$ be a Poincar\'e function of an entire function $f$.
  Then we can use the defining functional relation to 
  describe the preimage $L^{-1}(w)$ of a point $w\in\C$ in terms of the
  iterated preimages of $w$ under $f$, as follows. 
  (See Figure~\ref{fig:preimage structure}.)

\begin{obs}[Preimage structure of linearisers] \label{obs:preimage structure}
Let $f$ be an entire function with a repelling fixed point $\fixedpoint$ of multiplier $\multiplier$, and 
  let $\poincarefunction$ be a Poincar{\'e} function of $ f $ at $\fixedpoint$.
  Let $ \Delta_0 = \disk (0, \abs{ \rho } r_0) $ be a small disc around $0$ on which $\poincarefunction$ is univalent,
   and set \begin{align}
    A_0 & \defeq \setof{ z \in \complexnumbers \colon r_0 \leq |z| < r_0 |\multiplier| } \subset \Delta_0 \\
     A_f & \defeq \poincarefunction(A_0).
           \end{align}
  In particular $A_0$ is a fundamental domain for $ z\mapsto \multiplier z $.
  
Let $w\notin \poincarefunction(\Delta_0)$ and $n\geq 1$ and
define
\begin{equation} \label{lem:preimage structure,item:level set}
 \poincarelevset_n \defeq \poincarefunction^{-1} (w)
    \intersection
    \setof{ z \in \complexnumbers \colon \abs{ \multiplier }^{ n } \cdot r_0 \leq |z| < |\multiplier|^{ n + 1 } \cdot r_0 }.
\end{equation}
Observe that
$ \poincarefunction^{- 1} (w) = \familyunion{n\geq 1}{} \poincarelevset_n $
and
\begin{align}
 \poincarelevset_n =
 \multiplier^n\cdot  (\poincarefunction_{\restricted \Delta_0})^{-1} \left( f^{- n} (w) \setintersection A_f \right).
\end{align}

Finally, let $z \in \poincarelevset_n$ and 
      set $\zeta \defeq \poincarefunction(z/\multiplier^n) \in f^{- n} (w)\setintersection A_f$. Then
\begin{equation} \label{lem:preimage structure,item:derivative}
 \poincarefunction' (z) = \frac{1}{\multiplier^n} L' \left( \frac{z}{\multiplier^n} \right) \left({f^{\iterated n}}\right)' (\zeta).
\end{equation}
\end{obs}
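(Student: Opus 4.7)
The proof is essentially bookkeeping with the functional equation, so my plan is to iterate the Schröder relation and use univalence on $\Delta_0$. Iterating $L(\rho z) = f(L(z))$ gives $L(\rho^n z) = f^n(L(z))$ for every $n \geq 0$; differentiating in $z$ yields $\rho^n L'(\rho^n z) = (f^n)'(L(z))\, L'(z)$. This identity is essentially all that is needed, the rest being geometric interpretation of the annular decomposition $\{|\rho|^n r_0 \leq |z| < |\rho|^{n+1} r_0\}$.

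First I would verify that $L^{-1}(w) = \bigcup_{n \geq 1} E_n$. The hypothesis $w \notin L(\Delta_0)$ means that no preimage of $w$ lies in the disc of radius $|\rho|r_0$, so every $z \in L^{-1}(w)$ satisfies $|z| \geq |\rho| r_0$. The annuli $\{|\rho|^n r_0 \leq |z| < |\rho|^{n+1} r_0\}$ for $n \geq 1$ partition $\{|z| \geq |\rho|r_0\}$, giving the decomposition.

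Next I would establish the identity $E_n = \rho^n \cdot (L|_{\Delta_0})^{-1}\bigl(f^{-n}(w) \cap A_f\bigr)$. Given $z \in E_n$, set $y \defeq z/\rho^n$; then $|y| \in [r_0, |\rho| r_0)$, so $y \in A_0 \subset \Delta_0$ where $L$ is univalent. The iterated functional equation gives $w = L(z) = L(\rho^n y) = f^n(L(y))$, hence $\zeta \defeq L(y) \in f^{-n}(w)$ and also $\zeta \in L(A_0) = A_f$. Thus $z = \rho^n \cdot (L|_{\Delta_0})^{-1}(\zeta)$. Conversely, given $\zeta \in f^{-n}(w) \cap A_f$, let $y \defeq (L|_{\Delta_0})^{-1}(\zeta) \in A_0$; then $\rho^n y$ has modulus in $[|\rho|^n r_0, |\rho|^{n+1} r_0)$, and $L(\rho^n y) = f^n(L(y)) = f^n(\zeta) = w$, so $\rho^n y \in E_n$.

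Finally, the derivative formula follows by evaluating $\rho^n L'(\rho^n y) = (f^n)'(L(y))\, L'(y)$ at $y = z/\rho^n$ and rearranging, which gives exactly \eqref{lem:preimage structure,item:derivative}. I expect no genuine obstacle here; the only thing to be careful about is making sure that the choice of $r_0$ (so that $L$ is univalent on $\Delta_0$) guarantees that the map $y \mapsto \rho^n y$ is a bijection between $A_0$ and the annulus defining $E_n$, and that $L|_{\Delta_0}$ is a bijection onto $L(\Delta_0) \supset A_f$, both of which are immediate from the setup.
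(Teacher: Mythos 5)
Your proof is correct and follows exactly the route the paper intends: the paper's own proof simply states that the claims "follow immediately from the functional relation" $\poincarefunction(\multiplier z)=f(\poincarefunction(z))$, and your write-up is precisely the bookkeeping (iterating the relation to $\poincarefunction(\multiplier^n y)=f^{\iterated n}(\poincarefunction(y))$, using univalence of $\poincarefunction$ on $\Delta_0$, and differentiating) that this claim compresses. No gaps; the verification of the two-way inclusion for $\poincarelevset_n$ and the chain-rule computation are exactly what is needed.
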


\begin{figure}
 \setlength{\fboxsep}{0pt}%
 \setlength{\fboxrule}{1pt}%
 \fbox{\includegraphics[scale=0.17]{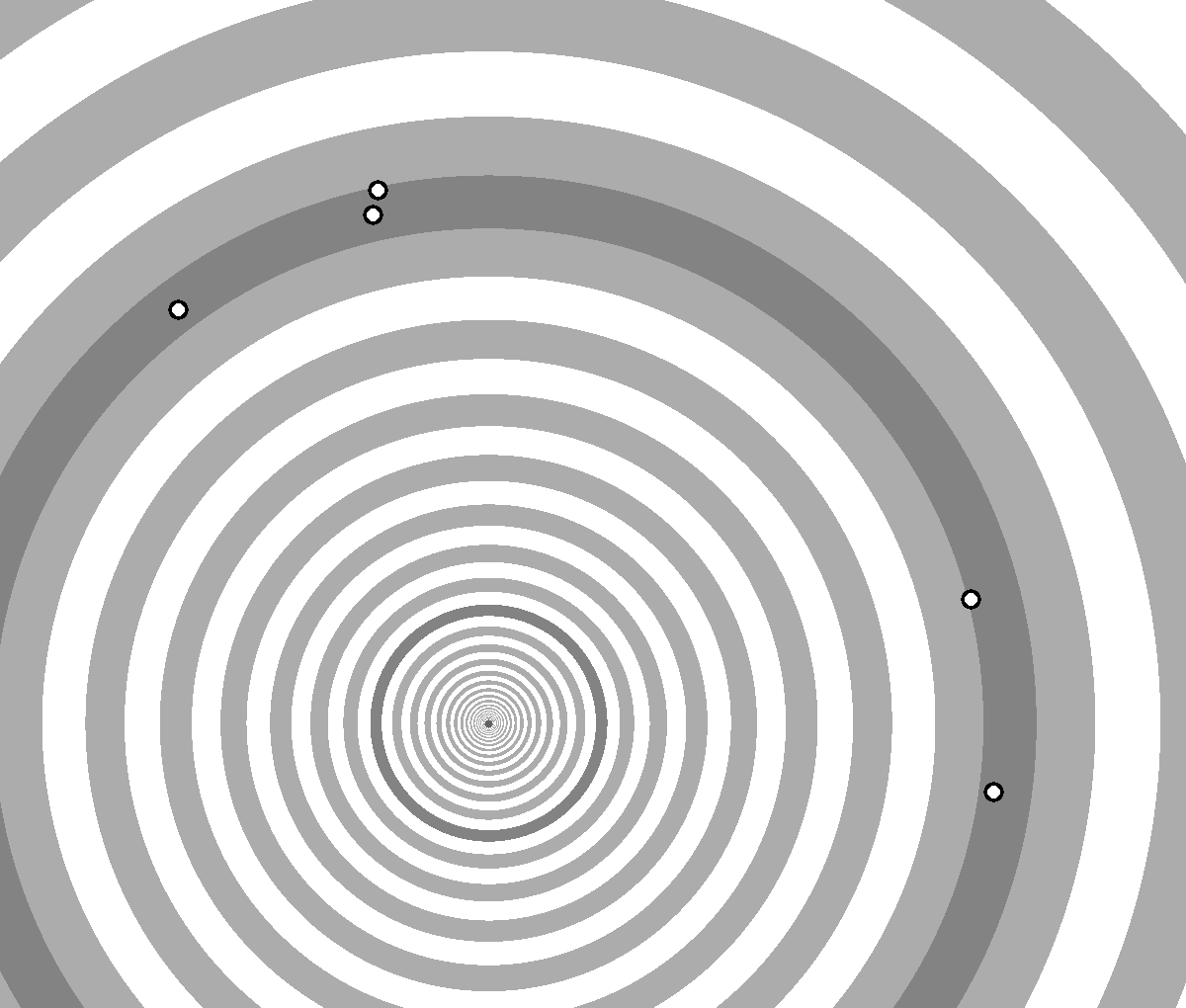}}
 \fbox{\includegraphics[scale=0.17]{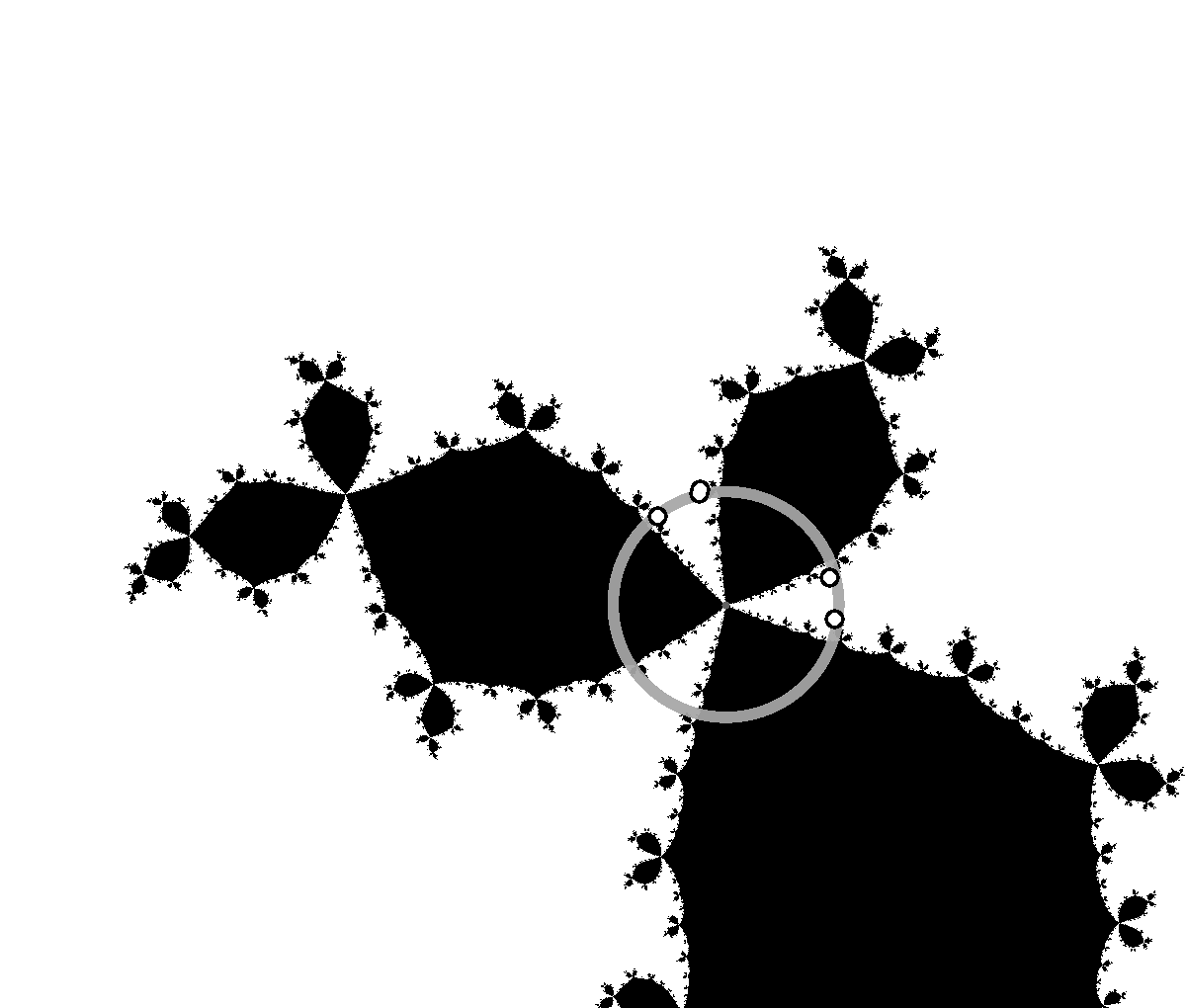}}
 \caption{\label{fig:preimage structure}
    Illustration of Observation \ref{obs:preimage structure}, for a lineariser $\poincarefunction$
          at a fixed point $\alpha$ of the Douady rabbit polynomial $P(z)=z^2+c, c\approx 0.123+0.745\sqrtmone$.
          The domain of $L$ is  on the left, while the range (the dynamical plane of $P$) is on the right. 
          An approximation of a  fundamental annulus for the dynamics of $P$ is shown in grey on the right. 
          The five preimages of $w\defeq 2+2\sqrtmone$ by $P^{\iterated 15}$
          which belong to the annulus are represented by white discs with black borders.
          (Two of these preimages lie very close together.) The filled Julia set of $P$ is shown in black in the background.  
          On the left, $A_0$ is drawn as a small dark grey round annulus with its centre at $0$.
          The different annuli $|\multiplier|^{k}A_0$ are differentiated by white and light grey colors.
          The annulus $|\multiplier|^{15}A_0$ is also \adz{represented} in dark grey. 
          By Observation~\ref{obs:preimage structure}, 
          this annulus contains exactly five elements of $\poincarefunction^{-1}(2+2\sqrtmone)$,
          corresponding to the five points shown on the right;  
          these are the elements of  $\poincarelevset_{15}$.} 
\end{figure}

\begin{proof}
 The claims follow immediately from the functional relation~\eqref{eq:schroederequation}. 
\end{proof}

We will need the following result (compare also \cite[p.~581,~Footnote~2]{EpsteinRempeGillen2015}).

\begin{prop}[{\cite[Proposition 4.2 (ii)]{MihaljevicBrandtPeter2012}}]\label{prop:sing set of linearisers}
Let $f$ be an entire function and let $\poincarefunction$ be a Poincar\'e function of $f$. Then $S(\poincarefunction)=\postsingular(f)$. 
\end{prop}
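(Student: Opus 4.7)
The plan is to prove the two inclusions $S(\poincarefunction) \subseteq \postsingular(f)$ and $\postsingular(f) \subseteq S(\poincarefunction)$ separately, exploiting throughout the differentiated Schr\"oder equation $\multiplier \poincarefunction'(\multiplier z) = f'(\poincarefunction(z))\,\poincarefunction'(z)$ obtained from \eqref{eq:schroederequation}.

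For the forward inclusion, I would first consider a critical value $v = \poincarefunction(w)$ with $\poincarefunction'(w) = 0$. Since $\poincarefunction'(0) \ne 0$, we have $w \ne 0$, and evaluating the differentiated identity at $z = w/\multiplier$ forces either $\poincarefunction'(w/\multiplier) = 0$ or $\poincarefunction(w/\multiplier)$ to be a critical point of $f$. As critical points of $\poincarefunction$ form a discrete set avoiding a neighbourhood of $0$, the descent $w, w/\multiplier, w/\multiplier^{2}, \ldots$ terminates at some smallest $k \ge 1$ for which $w/\multiplier^{k}$ is not a critical point of $\poincarefunction$; at that step $\poincarefunction(w/\multiplier^{k})$ must be a critical point of $f$, and iterating~\eqref{eq:schroederequation} yields $v = f^{k}(\poincarefunction(w/\multiplier^{k})) \in \postsingular(f)$. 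For an asymptotic value, take $z_n \to \infty$ with $\poincarefunction(z_n) \to v$; I would pick $k_n$ so that $z_n/\multiplier^{k_n}$ lies in the fundamental annulus $A_0$ of Observation~\ref{obs:preimage structure}, pass to a subsequence along which $z_n/\multiplier^{k_n} \to \zeta$, and write $\poincarefunction(z_n) = f^{k_n}(u_n)$ with $u_n \to u \defeq \poincarefunction(\zeta)$ and $k_n \to \infty$. Assuming for contradiction $v \notin \postsingular(f)$, a disc $D \ni v$ disjoint from $\postsingular(f)$ supports all inverse branches of every $f^{k_n}$; the branch $\phi_n$ with $\phi_n(\poincarefunction(z_n)) = u_n$ then forms a sequence of univalent maps $D \to \C$ whose normal-family limit is the constant $u$, and this has to be ruled out by dynamical considerations at $u$.

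For the reverse inclusion, the Schr\"oder equation gives $f(S(\poincarefunction)) \subseteq S(\poincarefunction)$ directly: differentiating~\eqref{eq:schroederequation} at a critical point $w$ of $\poincarefunction$ shows that $\multiplier w$ is also a critical point of $\poincarefunction$, with value $f(\poincarefunction(w))$, and any $\poincarefunction(z_n) \to v$ with $z_n \to \infty$ gives $\poincarefunction(\multiplier z_n) \to f(v)$. As $S(\poincarefunction)$ is closed, it therefore suffices to check $S(f) \subseteq S(\poincarefunction)$. For a critical value $s = f(c)$ of $f$: if $c \in \poincarefunction(\C)$, say $c = \poincarefunction(w)$, then $\multiplier w$ is a critical point of $\poincarefunction$ with value $s$; otherwise $c$ is the at-most-one Picard-exceptional value of $\poincarefunction$, so by Iversen's theorem $c$ is an asymptotic value of $\poincarefunction$, and pushing the corresponding approach sequence forward by $\multiplier$ realises $s = f(c)$ as an asymptotic value of $\poincarefunction$. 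Asymptotic values of $f$ are handled symmetrically by lifting an asymptotic curve of $f$ through $\poincarefunction$.

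The main obstacle is the asymptotic-value half of the forward inclusion: whereas the critical-value side and the reverse inclusion reduce to bookkeeping through~\eqref{eq:schroederequation}, deducing $v \in \postsingular(f)$ from $f^{k_n}(u_n) \to v$ with $u_n \to u$ and $k_n \to \infty$ requires the normal-family argument on inverse branches of $f^{k_n}$ over $\C \setminus \postsingular(f)$, together with a careful dichotomy between Julia-set expansion and Fatou-set orbit behaviour at $u$, and that is where the technical work concentrates.
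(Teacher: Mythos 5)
First, note that the paper does not actually prove this proposition: it is quoted from \cite[Proposition~4.2~(ii)]{MihaljevicBrandtPeter2012}, so there is no in-paper argument to compare yours against; your proposal has to stand on its own. The easy parts of your plan are fine: the descent argument showing that every critical value of $\poincarefunction$ is a forward iterate of a critical value of $f$, the forward invariance $f(S(\poincarefunction))\subseteq S(\poincarefunction)$, and the inclusion of critical values of $f$ (via $\multiplier w$ being a critical point of $\poincarefunction$ when $c=\poincarefunction(w)$) are all correct and routine consequences of \eqref{eq:schroederequation}.

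The genuine gap is exactly where you located the ``technical work'': asymptotic values of $\poincarefunction$, and there the strategy as set up cannot succeed, because you have replaced the asymptotic \emph{curve} by a mere sequence $z_n\to\infty$ with $\poincarefunction(z_n)\to v$. By Picard's theorem, every $v\in\C$ with at most one exception has infinitely many $\poincarefunction$-preimages, hence admits such a sequence together with all the data you derive from it ($k_n$, $u_n\in \overline{A_f}$, and univalent branches $\phi_n$ of $f^{-k_n}$ on a disc $D\ni v$ avoiding $\postsingular(f)$). Concretely, take $f(z)=z^2$, $\fixedpoint=1$, $\poincarefunction=\exp$, so $\postsingular(f)=S(\poincarefunction)=\{0\}$; for $v=2\notin S(\poincarefunction)$ put $z_n=\log 2+2\pi i n$, choose $k_n$ with $z_n/2^{k_n}\in A_0$, and observe that the branches $\phi_n$ of $f^{-k_n}$ on a disc around $2$ avoiding $0$ exist and converge locally uniformly to a constant $u\in\overline{A_f}$ --- exactly the configuration you hope to contradict, yet there is nothing to contradict. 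So no ``dynamical considerations at $u$'' can rule this out: the connectivity of the asymptotic path must enter. (A workable route: if $\overline{D}\cap\postsingular(f)=\emptyset$, use Observation~\ref{obs:preimage structure} to show that every component of $\poincarefunction^{-1}(D)$ has the form $\multiplier^{n}\cdot(\poincarefunction|_{\Delta_0})^{-1}(W)$ with $W$ a component of $f^{-n}(D)$ meeting $A_f$ that is mapped univalently onto $D$ by $f^{n}$; such components are bounded and map homeomorphically onto $D$, which precludes an asymptotic tract of $\poincarefunction$ over $D$ and simultaneously gives $D\cap S(\poincarefunction)=\emptyset$.) Two smaller issues: the normality of the univalent family $(\phi_n)$ and the constancy of its limits are asserted without justification; and in the reverse inclusion, ``lifting an asymptotic curve of $f$ through $\poincarefunction$'' is not automatic, since $\poincarefunction$ is not a covering and a partial lift may escape to infinity in finite time, producing an asymptotic value of $\poincarefunction$ at some intermediate point of the curve rather than at the asymptotic value of $f$ you are after; it is cleaner to show directly that over any simply connected domain avoiding $S(\poincarefunction)$ one can construct branches of $f^{-1}$ from branches of $\poincarefunction^{-1}$ via $w\mapsto\poincarefunction(\psi(w)/\multiplier)$.
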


\section{A lower bound}

The following result, together with Theorem \ref{thm:affine equiv same evhypdim},
 implies Theorem \ref{thm:Eventual hyperbolic dimension of linearisers II}.

\begin{thm}
  \label{thm:evhypdim ge hypdim}
  Let $f$ be a non-constant entire function which is not affine. Let $\poincarefunction$ be the normalised Poincar{\'e} function
  associated to a repelling fixed point of $f$. Then
  \[ \eventualhyperbolicdimension  \poincarefunction \geq \hyperbolicdimension f. \]
\end{thm}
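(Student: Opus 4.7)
By Remark~\ref{rem:compute evhypdim only for normalised poincare functions} we may assume $\fixedpoint = 0$ and that $\poincarefunction$ is normalised. Fix $t < \hyperbolicdimension(f)$ and $R > 0$; to establish $\eventualhyperbolicdimension(\poincarefunction) \geq t$ it suffices to produce, for every $\varepsilon > 0$, a hyperbolic set for $\poincarefunction$ of Hausdorff dimension at least $t - \varepsilon$ contained in $\setof{|z| \geq R}$. The plan is to transfer the iterated function system for $f$ from Lemma~\ref{lem:lower exp growth} through the functional equation $\poincarefunction(\multiplier z) = f(\poincarefunction(z))$, in order to build an IFS for $\poincarefunction$ localised deep inside the annulus $\multiplier^{\nu_p} A_0$ for large $p$.

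First I would apply Lemma~\ref{lem:lower exp growth} to $f$ with initial open set $U$ chosen as a small disc inside the fundamental annulus $A_f = \poincarefunction(A_0)$, centred at a point of $\juliaset(f) \setcomplement \postsingular(f)$. Such a point exists because $0$ lies in $\overline{A_f} \intersection \juliaset(f)$ and one can avoid the (at most countable) iterated preimages of singular values accumulating in $A_f$. This yields a Jordan domain $D \compactlyembedded A_f \setcomplement \postsingular(f)$ together with conformal inverse branches $\phi_{p,i} \defeq (f^{\nu_p}|_{D_i^p})^{-1} : D \to D_i^p \subset D$ satisfying~\eqref{pressure-explosion-on-the-ifs}. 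The crucial point is that $\phi_{p,i}(D) \subset A_f$, so Observation~\ref{obs:preimage structure} allows me to define the univalent inverse branches of $\poincarefunction$
\[ \Psi_{p,i}(w) \defeq \multiplier^{\nu_p} \cdot (\poincarefunction|_{A_0})^{-1}(\phi_{p,i}(w)), \qquad w \in D, \]
whose images lie in $\multiplier^{\nu_p} A_0$. The derivative identity of Observation~\ref{obs:preimage structure}, combined with uniform bounds on $|\poincarefunction'|$ over $\overline{A_0}$ and on $|w|$ over $D$, gives $\cylnorm{\Deriv\poincarefunction(\Psi_{p,i}(w))} \approx |(f^{\nu_p})'(\phi_{p,i}(w))|$. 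The estimate~\eqref{pressure-explosion-on-the-ifs} then yields the local cylindrical partition-function bound $\sum_{i=1}^{I_p} \cylnorm{\Deriv\poincarefunction(\Psi_{p,i}(w))}^{-t} \geq C' e^{\beta \nu_p}$ for all $w \in D$.

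To extract a hyperbolic set from this data, I would iterate as follows: composing a single $\poincarefunction$-inverse branch $\Psi_{p,i_0}$ with a chain $\phi_{p,i_1} \composedwith \cdots \composedwith \phi_{p,i_k}$ of $f$-inverse branches (which all map $D$ into $D$) produces $I_p^{k+1}$ univalent inverse branches of $\poincarefunction$ from $D$ into pairwise disjoint sub-discs of $\multiplier^{\nu_p} A_0$. Telescoping the cylindrical derivative along each chain, together with the bound above, gives $\sum \cylnorm{\Deriv\poincarefunction(\Phi_{p,i_0,\dots,i_k}(w))}^{-t} \geq C' \cdot (C e^{\beta \nu_p})^{k}$, with uniform Koebe distortion on a slightly enlarged domain (possible because $D \compactlyembedded A_f \setcomplement \postsingular(f) = A_f \setcomplement S(\poincarefunction)$). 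Applying a Moran/Bowen-type argument to this graph-directed conformal IFS---with two node-types, corresponding to $D$ and $\multiplier^{\nu_p} A_0$---produces a Cantor-type limit set inside $\multiplier^{\nu_p} A_0$ of Hausdorff dimension at least $t - \varepsilon$ for $k$ large, invariant under an iterate of $\poincarefunction$ and uniformly expanding in the cylindrical (hence also Euclidean, since $|\multiplier|^{\nu_p} r_0$ is large) metric. This is the required hyperbolic set; letting $p \tendsto \infty$, then $\varepsilon \tendsto 0$ and $t \tendsto \hyperbolicdimension(f)$, completes the proof.

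The main obstacle I anticipate is the last step: the family $\setof{\Psi_{p,i}}$ is not an IFS on $D$ in the traditional sense, since its targets lie in $\multiplier^{\nu_p} A_0 \neq D$, so the standard Moran formulas do not apply verbatim. One must either invoke the appropriate version of Bowen's formula for a graph-directed conformal IFS on two node-types, or realise the limit set above as the attractor of an honest IFS of inverse branches of a single iterate $\poincarefunction^{-(k+1)}$, which would require analytically continuing each $\Psi_{p,i}$ along paths in $\complexnumbers \setcomplement S(\poincarefunction)$ to enlarge its domain sufficiently to include the targets of the other branches. Once this is dealt with, the bounded distortion from Koebe and the exponential dimension bound matching the $f$-IFS growth rate are standard.
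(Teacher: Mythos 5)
There is a genuine gap, and it lies exactly where you suspected trouble, but the proposed fixes cannot repair it. A hyperbolic set for $\poincarefunction$ must be compact, \emph{forward invariant under $\poincarefunction$}, and (for the eventual hyperbolic dimension) have its entire $\poincarefunction$-orbit in $\setof{|z|\geq R}$. Your compositions $\Psi_{p,i_0}\composedwith\phi_{p,i_1}\composedwith\cdots\composedwith\phi_{p,i_k}$ are inverse branches of $f^{k\nu_p}\composedwith\poincarefunction$, not of $\poincarefunction$ or of any of its iterates. Concretely, if $\Lambda\subset D$ is the attractor of the $f$-IFS, your limit set is $X=\Psi_{p,i_0}(\Lambda)\subset\multiplier^{\nu_p}A_0$ and satisfies $\poincarefunction(X)=\Lambda$, a set lying near the fixed point $\fixedpoint$ in the range plane; so $X$ is not invariant under $\poincarefunction$, its orbit immediately leaves $\setof{|z|\geq R}$, and from then on one is looking at the (unrelated) dynamics of $\poincarefunction$ near $0$, not at $f$. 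The two-node graph-directed system has no edges leaving the node $\multiplier^{\nu_p}A_0$, so there is no Bowen/Moran formula to invoke for a dynamically meaningful limit set; and analytic continuation cannot turn these maps into branches of $\poincarefunction^{-(k+1)}$, since the identity $f^{k\nu_p}\composedwith\poincarefunction\composedwith\Phi=\operatorname{id}$ persists under continuation. The dimension count is fine, but the object produced is simply not a hyperbolic set for $\poincarefunction$.

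What is missing is the mechanism the paper obtains from Wiman--Valiron theory (Lemma~\ref{lem:wiman-valiron-eremenko construction}). To build a set whose whole $\poincarefunction$-orbit stays in a far-out annulus $\lbAnnulus_k$, one needs, for each $w\in\lbAnnulus_k$, many $\poincarefunction$-preimages of $w$ lying \emph{again in $\lbAnnulus_k$}; by Observation~\ref{obs:preimage structure} this means finding preimages of the huge point $w$ under $f^{n}$ inside the fundamental annulus $A_f$, with $n$ constrained to the window $[\lbAnnlevel_k,\lbAnnlevel_k+\lbAnnmod]$ and with controlled derivative. The paper achieves this by routing orbits from $D$ through the Wiman--Valiron sectors $\lbSector_0\to\lbSector_1\to\cdots$ (where $f(\lbSector_j)\supset\lbAnnulus_j\cup\lbSector_{j+1}$ and $f'$ is controlled via the central index), prepending the $f$-IFS from Lemma~\ref{lem:lower exp growth} both to tune the total number of steps into the window and to supply the $e^{\beta\nu}$ growth that beats the polynomial loss $N_0\cdots N_{k-1}|w|$, and only then applying one inverse branch of $\poincarefunction$ to land back in the slit annulus; the Eremenko--Lyubich estimate~\eqref{eqn:eremenkolyubichestimate} then makes the set of points with all iterates in $\lbAnnulus_k$ a hyperbolic set of dimension $\geq t$. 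Your argument uses no information about $f$ near infinity, and without such input (Wiman--Valiron or an equivalent) one cannot pull a far-out point back into the same far-out region under $\poincarefunction^{-1}$, which is exactly what forward invariance demands.
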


In the proof of this theorem,
 we construct a sequence of closed round annuli $\lbAnnulus_\lbAnnulusidx$,
 centered at $0$ and with distance to $0$ converging to $\infty$,
 and satisfying the following: for any $t<\hyperbolicdimension f$,
 the first partition function of
 $\poincarefunction_{\restricted \lbAnnulus_\lbAnnulusidx\setintersection\poincarefunction^{-1}(\lbAnnulus_\lbAnnulusidx)}$ 
 with exponent $t$
 will be bounded from below by a large constant as long as $\lbAnnulusidx$ is large enough.

 From the Eremenko-Lyubich estimate \eqref{eqn:eremenkolyubichestimate}
 it follows that
 the set $X$ of points whose orbits stay inside the annulus $\lbAnnulus_\lbAnnulusidx$
 is a nonempty hyperbolic set for $\poincarefunction$ for $\lbAnnulusidx$ large enough.
 Since the first partition function for this hyperbolic set is bounded from below
 by a constant which can be chosen as large as necessary (again with $\lbAnnulusidx$ large enough),
 the dimension of the set $X$ is at least $t$.
 This implies $ \eventualhyperbolicdimension L \geq t $ for any $t<\hyperbolicdimension f$,
 and hence the stated conclusion.

We start with the basic construction of the sequence $(\lbAnnulus_\lbAnnulusidx)_\lbAnnulusidx$.
 This is done in the following lemma.

\begin{lem} \label{lem:wiman-valiron-eremenko construction}
Let $f$ be a transcendental entire function or a polynomial of degree $d\geq 2$.
Let $\lbFundradius>0$, $\lbAnnmod>2$ and $\multiplier$ a complex number such that $|\multiplier|>1$.
Let $\lbLargemod=4+2\lbAnnmod\log|\multiplier|$
 Then there exists a sequence of natural numbers $\lbAnnlevel_\lbAnnidx\tendsto\infty$,
 and a sequence of bounded open subsets of the complex plane $\lbSector_\lbAnnulusidx$ \adz{(with an explicit description given below)}
 satisfying the following.
 Let 
 \begin{equation}\label{eq:lb, def of annuli}
  \lbAnnulus_\lbAnnidx =
       \annulus \left( \left| \multiplier \right|^{\lbAnnlevel_\lbAnnidx} \lbFundradius,
                       \left| \multiplier \right|^{\lbAnnlevel_\lbAnnidx + \lbAnnmod} \lbFundradius \right)
 \end{equation}
 then,
 \begin{itemize}
  \item there is a sequence of complex numbers $\lbWVpointseq_\lbAnnulusidx\tendsto\infty$
  and a nondecreasing sequence of natural numbers $\lbWVcentralidx_\lbAnnulusidx$ such that
  \begin{equation} \label{eq:lb, def of sector}
  \lbSector_\lbAnnulusidx=\setof{ z :
  \left| \log \left| \frac{z}{\lbWVpointseq_\lbAnnulusidx} \right| \right| < \frac{\lbLargemod}{\lbWVcentralidx_\lbAnnulusidx},
  \left| \arg \frac{z}{\lbWVpointseq_\lbAnnulusidx} \right| < \frac{\lbLargemod}{\lbWVcentralidx_\lbAnnulusidx} };
 \end{equation}
  \item For all $\lbAnnidx\geq 0$, $f(\lbSector_\lbAnnulusidx)\supset\lbAnnulus_\lbAnnidx\setunion \lbSector_{\lbAnnulusidx+1}$\adz{ ;
  \item $ \lbWVpointseq_{ \lbAnnulusidx + 1 } \in \lbAnnulus_\lbAnnidx $.}
 \end{itemize}
 Moreover Wiman-Valiron theory applies at $\lbWVpointseq_\lbAnnulusidx$
 on
 $\lbSector_\lbAnnulusidx$
 with central index $\lbWVcentralidx_\lbAnnulusidx$,
 that is,
 \begin{enumerate}
  \item \adz{ $|f(\lbWVpointseq_\lbAnnulusidx)|=\maximummodulus(r_\lbAnnulusidx, f) \defeq \max \setof{ \abs{ f(z)} : \abs{ z } = r_k } $,}
  with $r_\lbAnnulusidx=|\lbWVpointseq_\lbAnnulusidx|$;
  \item For all
  $z\in \lbSector_\lbAnnulusidx$
    \begin{equation} \label{eq: wiman-valiron value of the function}
      f (z)
       =
      \left( \frac{z}{\lbWVpointseq_\lbAnnulusidx} \right)^{\lbWVcentralidx_\lbAnnulusidx}
            f (\lbWVpointseq_\lbAnnulusidx )
            \left( 1 + \lbWVerror_0 (z) \right),
    \end{equation}
    and
    \begin{equation}
      f' (z) = \frac{\lbWVcentralidx_\lbAnnulusidx}{z}
               \left( \frac{z}{\lbWVpointseq_\lbAnnulusidx} \right)^{\lbWVcentralidx_\lbAnnulusidx}
                f (\lbWVpointseq_\lbAnnulusidx) \left( 1 + \lbWVerror_1 (z) \right), \label{eq:wiman-valiron derivative}
    \end{equation}
    with $\left| \lbWVerror_0 (z) \right| + \left| \lbWVerror_1 (z) \right| \leq \lbWVerror_\lbAnnulusidx$
    for all $z\in \lbSector_\lbAnnulusidx$
    and $\lbWVerror_\lbAnnulusidx\tendsto 0$;
    \item We have the following estimate:
    \begin{equation}
      \lbWVcentralidx_\lbAnnulusidx \leq \left( \log \maximummodulus (r_\lbAnnulusidx, f) \right)^2.
      \label{eq:wiman-valiron central index estimate} 
    \end{equation}
 \end{enumerate}
\adz{
 Finally,
 \begin{equation} \label{eq: estimate on the size of the WV annulus}
  \frac{ 1 }{ \abs{ \rho }^2 R_f } \maximummodulus (r_k, f)
  \leq
  \abs{ \rho }^{ n_k }
  \leq
  \frac{ 1 }{ \abs{ \rho } R_f } \maximummodulus (r_k, f).
 \end{equation}
}
\end{lem}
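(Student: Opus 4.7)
The plan is to build the sequences $(r_k,\xi_k,N_k,n_k)$ inductively, invoking Wiman-Valiron theory at each step to produce a sector on which $f$ behaves like a monomial. Classical Wiman-Valiron theory (see, e.g., Hayman) says that, outside an exceptional set of radii of finite logarithmic measure, if $|\xi|=r$ and $|f(\xi)|=\maximummodulus(r,f)$ then on a sector of logarithmic and angular half-width of order $\lbLargemod/N(r,f)$ around $\xi$, $f$ is well-approximated by $f(\xi)(z/\xi)^{N(r,f)}$ with relative error tending to $0$ as $r\to\infty$; moreover $N(r,f)\leq(\log \maximummodulus(r,f))^2$ for $r$ outside a further small set. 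For polynomials $f(z)=a_d z^d+\cdots$ the same conclusions hold trivially for all large $r$ with $N_k=d$. Thus assertions~(2) and~(3) in the lemma will follow directly from this input once a suitable sequence $(r_k)$ is chosen, and the essential work is to arrange that $f(\lbSector_k)$ contains both $\lbAnnulus_k$ and the next sector $\lbSector_{k+1}$, and to record the defining property of $n_k$.

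For the inductive step, suppose $\xi_k$, $N_k$, $\lbSector_k$ have been built. Using~\eqref{eq: wiman-valiron value of the function}, the image $f(\lbSector_k)$ is well-approximated by $f(\xi_k)\cdot\{(z/\xi_k)^{N_k}:z\in \lbSector_k\}$. Since the monomial $u\mapsto u^{N_k}$ sends the sector defining $\lbSector_k$ onto $\{\,|\log|v||<\lbLargemod,\ |\arg v|<\lbLargemod\,\}$, and since $\lbLargemod\geq 4>\pi$ the angular image wraps around zero, $f(\lbSector_k)$ contains the full round annulus $\annulus(|f(\xi_k)|e^{-\lbLargemod/2},|f(\xi_k)|e^{\lbLargemod/2})$ once the Wiman-Valiron error is small enough. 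I would then define $n_k$ to be the largest integer with $|\multiplier|^{n_k+1}\lbFundradius\leq \maximummodulus(r_k,f)$, which gives~\eqref{eq: estimate on the size of the WV annulus} immediately. The annulus $\lbAnnulus_k$ of~\eqref{eq:lb, def of annuli} is then contained in $\annulus(\maximummodulus(r_k,f)/|\multiplier|^2,\maximummodulus(r_k,f)\,|\multiplier|^{m-1})$, which fits inside the image annulus precisely because $\lbLargemod=4+2m\log|\multiplier|$ exceeds $2m\log|\multiplier|$ by four, leaving slack $e^2$ on each radial side.

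To produce the next centre $\xi_{k+1}$, select a radius $r_{k+1}\in[|\multiplier|^{n_k}\lbFundradius,|\multiplier|^{n_k+m}\lbFundradius]$ that avoids the Wiman-Valiron exceptional set; this is possible because that set has finite total logarithmic measure while each interval has logarithmic width at least $m\log|\multiplier|>2\log|\multiplier|$, so valid radii exist provided $r_0$ was chosen large enough. Take $\xi_{k+1}$ to be a maximum-modulus point at radius $r_{k+1}$: then $\xi_{k+1}\in\lbAnnulus_k$ and property~(1) hold by construction, and $N_{k+1}=N(r_{k+1},f)$ is nondecreasing in $k$. The containment $\lbSector_{k+1}\subset f(\lbSector_k)$ follows because $\xi_{k+1}$ sits strictly inside $f(\lbSector_k)$ with radial margins of factor at least $|\multiplier|$ and $e^2$, while $\lbSector_{k+1}$ has logarithmic and angular half-width only $\lbLargemod/N_{k+1}\to 0$ as $k\to\infty$.

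The main technical obstacle will be the interaction of the inductive construction with the Wiman-Valiron exceptional set: one must guarantee that valid radii remain available at every level, not merely at the start. The resolution is to choose $r_0$ (hence $n_0$) large enough that the tail of the exceptional set beyond $r_0$ has total logarithmic measure much smaller than $m\log|\multiplier|$; since each interval $[|\multiplier|^{n_k}\lbFundradius,|\multiplier|^{n_k+m}\lbFundradius]$ lies inside that tail, this single choice suffices for the whole induction. Once the sequence is in place, all listed properties hold by design: property~(1) by the choice of $\xi_k$, properties~(2) and~(3) from the Wiman-Valiron input, and~\eqref{eq: estimate on the size of the WV annulus} from the defining property of $n_k$.
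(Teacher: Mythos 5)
Your overall route is the same as the paper's: Wiman--Valiron theory at maximum-modulus points, an induction that dodges the exceptional set by taking $r_0$ so large that the tail of the exceptional set has logarithmic measure smaller than $\lbAnnmod\log|\multiplier|$, and a choice of $\lbAnnlevel_\lbAnnidx$ equivalent to the paper's, which does give \eqref{eq: estimate on the size of the WV annulus}. The gap is in the inductive covering step. First, the assertion that $f(\lbSector_\lbAnnidx)$ contains a round annulus about $|f(\lbWVpointseq_\lbAnnidx)|$ ``once the Wiman--Valiron error is small enough'' is the crux of the lemma and is not proved; the paper makes it rigorous by passing to logarithmic coordinates, comparing $\log f$ with the affine map $z\mapsto \lbWVcentralidx_\lbAnnidx(\log z-\log\lbWVpointseq_\lbAnnidx)+\log f(\lbWVpointseq_\lbAnnidx)$ on a square of half-side $\lbLargemod$, and applying Rouch\'e's theorem to obtain a region mapped onto the square of half-side $\lbLargemod-1$, hence containment of the annulus $e^{-\lbLargemod+1}\le |z/f(\lbWVpointseq_\lbAnnidx)|\le e^{\lbLargemod-1}$.

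Second, and more seriously, you weaken the covered annulus to logarithmic half-width $\lbLargemod/2$ and then justify $\lbSector_{\lbAnnidx+1}\subset f(\lbSector_\lbAnnidx)$ by claiming $\lbLargemod/\lbWVcentralidx_{\lbAnnidx+1}\to0$. That claim is false in the polynomial case, which the lemma explicitly includes and which is precisely the case needed for the linearisers of polynomials: there $\lbWVcentralidx_\lbAnnidx\equiv d$, and for $d=2$ the sector $\lbSector_{\lbAnnidx+1}$ has logarithmic half-width $\lbLargemod/2=2+\lbAnnmod\log|\multiplier|$, whereas the radial slack your annulus leaves beyond $\lbAnnulus_\lbAnnidx$ is only about $2+(\lbAnnmod-2)\log|\multiplier|$ on the inner side and $2+\log|\multiplier|$ on the outer side (using \eqref{eq: estimate on the size of the WV annulus}); since $r_{\lbAnnidx+1}$ may be forced near an edge of $\lbAnnulus_\lbAnnidx$, the induction does not close. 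With the full-strength containment of the annulus of half-width $\lbLargemod-1$, the slack becomes at least $3+(\lbAnnmod+1)\log|\multiplier|$ (outer) and $3+(2\lbAnnmod-2)\log|\multiplier|$ (inner), which dominates $\lbLargemod/\lbWVcentralidx_{\lbAnnidx+1}\le\lbLargemod/2$ for every $d\ge2$, so the fix is exactly the Rouch\'e-type argument you omitted. (A minor further point: you should also check, as the paper does via $e^{\lbLargemod/\lbWVcentralidx(r)}\le 1+2\lbLargemod/\lbWVcentralidx(r)$ or a fixed constant $\kappa$ in the polynomial case, that the sector $\lbSector_\lbAnnidx$ really sits inside the disc on which the Wiman--Valiron approximation is valid.)
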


\begin{proof}
 We do a similar construction as the one that can be found in the early reference \cite{Eremenko1989}.
 
For any $r \geq 0$, denote by $\lbWVpoint (r)$ a definite arbitrary choice of a complex number such that $| \lbWVpoint (r) | = r$
 with $| f (\lbWVpoint (r)) | = \maximummodulus (r, f)$.
Let $\lbLargemod>1$.
From Wiman-Valiron theory, we know that there exists an exceptional set $\lbWVexceptions \subset \realnumbers_+$
  of finite logarithmic measure (that is $\logarithmicmeasure \lbWVexceptions \defeq \int_\lbWVexceptions \frac{d x}{x}<\infty$),
  a nondecreasing function
  $r\mapsto \lbWVcentralidx (r)$
  from \adz{$ \positiverealnumbers\setcomplement\lbWVexceptions$ } to $\naturalnumbers$,
  and a positive function $r \mapsto \lbWVerror (r)$
  converging to $0$ as $r \tendsto \infty$, such that the following is
  satisfied for all
  \adz{$r \in \positiverealnumbers \setcomplement \lbWVexceptions$:}
  \begin{enumerate}
    \item For all \adz{ $z \in \disk \left( \lbWVpoint (r), \frac{ 2 \lbLargemod r}{\lbWVcentralidx (r)} \right)$,}
    \[ f (z) = \left( \frac{z}{\lbWVpoint (r)} \right)^{\lbWVcentralidx (r)} f (\lbWVpoint (r)) \left( 1 + \lbWVerror_0 (z) \right), \]
    and
    \begin{equation*}
      f' (z) = \frac{\lbWVcentralidx (r)}{z}
      \left( \frac{z}{\lbWVpoint (r)} \right)^{\lbWVcentralidx (r)}
      f (\lbWVpoint (r)) \left( 1 + \lbWVerror_1 (z) \right),
    \end{equation*}
    with $\left| \lbWVerror_0 (z) \right| + \left| \lbWVerror_1 (z) \right| \leq \lbWVerror (r)$
    for all $z$ in that disk.
        
    \item We have the following estimate:
    \begin{equation*}
      \lbWVcentralidx (r) \leq \left( \log \maximummodulus (r, f) \right)^2 .
    \end{equation*}
  \end{enumerate}
 In the case where $f$ is a polynomial map of degree $d\geq 2$,
 the above is still true if
 the central index $\lbWVcentralidx$ is replaced by the constant equal to $d$
 and the exceptional set $\lbWVexceptions$ is some bounded subset of \adz{$\positiverealnumbers$} depending on $f$.

\adz{Now recall that} $ M = 4 + 2 \lbAnnmod \log|\multiplier|$.
With this, one can construct a sequence of points $\lbWVpointseq_\lbAnnulusidx$
 and of sectors
 \adz{ $\lbSector_\lbAnnulusidx
   \subset
   \disk \left( \lbWVpointseq_\lbAnnulusidx, \frac{ 2 \lbLargemod r_\lbAnnulusidx}{\lbWVcentralidx_\lbAnnulusidx} \right)$ }
 defined by equation \eqref{eq:lb, def of sector}
 where
 $\lbWVpointseq_\lbAnnulusidx = \lbWVpoint (r_\lbAnnulusidx)$,
 $r_\lbAnnulusidx=| \lbWVpointseq_\lbAnnulusidx | \notin \lbWVexceptions$,
 $\lbWVcentralidx_\lbAnnulusidx = \lbWVcentralidx (r_\lbAnnulusidx)$
 and which is such that
 $f (\lbSector_\lbAnnulusidx)$ contains both $\lbSector_{\lbAnnulusidx + 1}$
 and the round annulus
 \adz{$\lbAnnulus_\lbAnnulusidx
  \ni\lbWVpointseq_{\lbAnnulusidx + 1}$},
  where $\lbAnnlevel_\lbAnnidx \in \naturalnumbers$.
  \adz{For $ f $ transcendental the ratio $ M / N(r) $ tends to $ 0 $ as $ r \tendsto \infty $
  and thus we can assume that $ e^{ M / N (r) } \leq 1 + \frac{ 2 M }{ N (r) } $
  for all $ r $ large enough. In the polynomial case $ M / N(r) $ is still bounded and it is possible to find a $ \kappa > 1 $
  large enough so that $ e^{ M / N (r) } \leq 1 + \frac{ \kappa M }{ N (r) } $ and work with $ \frac{ \kappa M }{ N (r) } $ instead of $ \frac{ 2 M }{ N (r) } $. }

Choose first $\lbRadseq_0 \notin \lbWVexceptions$
 and let $\lbWVpointseq_0 = \lbWVpoint (\lbRadseq_0)$ and
 \begin{equation*}
 S_0 = \left\{ z :
  \left| \log \abs{ \frac{z}{\lbWVpointseq_0} } \right| < \frac{\lbLargemod}{\lbWVcentralidx_0},
  \left| \arg  \frac{z}{\lbWVpointseq_0}  \right| < \frac{\lbLargemod}{\lbWVcentralidx_0} \right\}.
 \end{equation*}

Suppose now $\lbWVpointseq_\lbAnnidx$,
 and thus $\lbSector_\lbAnnidx$ have already been defined.
The mapping
 $$\ph_\lbAnnidx : z \mapsto \lbWVcentralidx_\lbAnnidx \,(\log z  - \log \lbWVpointseq_\lbAnnidx) + \log f (\lbWVpointseq_\lbAnnidx)$$
 sends $\lbSector_\lbAnnidx$ univalently onto the square
  $$\lbSquare_\lbAnnidx = \left\{ u :
      - \lbLargemod < \realpart (u - \log f (\lbWVpointseq_\lbAnnidx)) < \lbLargemod,
      - \lbLargemod < \imaginarypart (u - \log f (\lbWVpointseq_\lbAnnidx)) < \lbLargemod
     \right\}.$$
Assuming $\lbRadseq_0$ large enough,
  $\left| \log f - \ph_\lbAnnidx \right| = \left| \log \left( 1 + \lbWVerror_0 \right) \right| < 1$
  is true for any $r\in\closedopeninterval{\lbRadseq_0,+\infty}\setcomplement\lbWVexceptions$.
It then follows from Rouch{\'e}'s theorem that there exists an open subset
  $U_\lbAnnidx \subset \lbSector_\lbAnnidx$
  which is sent by $\log f$ univalently onto the square
  $$\lbSquareimage_\lbAnnidx = \left\{ u :
     - \lbLargemod + 1 < \realpart (u - \log f (\lbWVpointseq_\lbAnnidx)) < \lbLargemod - 1,
     - \lbLargemod + 1 < \imaginarypart (u - \log f (\lbWVpointseq_\lbAnnidx)) < \lbLargemod - 1
   \right\}.$$
   In particular, the image of $\lbSector_\lbAnnidx$ by $f$
   contains the annulus
   $$\lbLargerann_\lbAnnidx = \left\{ z :
     e^{- \lbLargemod + 1} \leq \left| \frac{z}{f (\lbWVpointseq_\lbAnnidx)} \right| \leq e^{\lbLargemod - 1}
    \right\}.$$
  
Then, by definition of $\lbLargemod$, the integer
  $\lbAnnlevel_\lbAnnidx =
   \integerpart{\frac{\log | f (\lbWVpointseq_\lbAnnidx) / \lbFundradius |}{\log \left| \multiplier \right|}} - 1
   \in \naturalnumbers$
  is such that
  $\lbAnnulus_\lbAnnidx \subset \lbLargerann_\lbAnnidx$.
  \adz{Inequalities \eqref{eq: estimate on the size of the WV annulus}
  follow from this choice.}
  Note that for $\lbRadseq_0$ large enough, we will have $\lbAnnlevel_\lbAnnidx\tendsto\infty$.
Choosing \adz{$\lbRadseq_0 $ large enough, one has
  \adz{$\logarithmicmeasure \p{ \lbWVexceptions \intersection \openclosedinterval{ r_0, +\infty } } <
     \logarithmicmeasure  \left( \lbAnnulus_\lbAnnidx \setintersection \positiverealnumbers \right)$.}}
Hence there is a
  $\lbRadseq_{\lbAnnidx+1} \in
      \left( \lbAnnulus_\lbAnnidx \setintersection \positiverealnumbers \right) \setcomplement \lbWVexceptions$.
Also, one can suppose $\lbRadseq_0$ large enough so that
    $\lbWVcentralidx (r) \geq 2$
    for all $r \geq \lbRadseq_0$.
  As a consequence, one can find a $\lbWVpointseq_{\lbAnnidx + 1} \in  \lbAnnulus_\lbAnnidx$,
  with $| \lbWVpointseq_{\lbAnnidx + 1} | = \lbRadseq_{\lbAnnidx + 1}$ and
  $\lbSector_{\lbAnnidx + 1} \subset  \lbLargerann_\lbAnnidx$.
\end{proof}

Now we can use Lemma \ref{lem:wiman-valiron-eremenko construction} to prove Theorem \ref{thm:evhypdim ge hypdim}.

\begin{proof}[Proof of Theorem \ref{thm:evhypdim ge hypdim}] (See Figure \ref{fig:proof of lower bound}).
\adz{Let $\multiplier$ be the multiplier of the repelling fixed point.}
Let $A_0=\annulus(\lbFundradius,|\rho|\lbFundradius)$ 
 be the fundamental annulus from Observation \ref{obs:preimage structure}
\adz{ (i.e. $ R_f = r_0 / \abs{ \rho } $)}.
 Take $\ifsdomain\subset A_0$, $a,b,C,\beta>0$ and the sequence
 $\left(\lbIfsit_p,(\ifsdomain_i^{p})_{i=1,\dots,I_p}\right)_p$
 as in Lemma \ref{lem:lower exp growth}
 and let \adz{$ \lbAnnmod = a + 2 b + 2 $}.
 Lemma \ref{lem:wiman-valiron-eremenko construction}
 gives the sequences of annuli $\lbAnnulus_\lbAnnidx$,
 of natural numbers $\lbAnnlevel_\lbAnnidx$ and $\lbWVcentralidx_\lbAnnidx$,
 of sets $\lbSector_\lbAnnidx$,
 and of points $\lbWVpointseq_\lbAnnidx$.

The domain $\ifsdomain$ intersects the Julia set of $f$,
  hence the family $(f^{\iterated k})_{k\in\naturalnumbers}$
  is not normal on $ \ifsdomain $ \adz{ and for any bounded $ G $ there is $ k $ such that $ G \subset f^{\iterated k } \p{ \ifsdomain } $}.
  It follows from \adz{the Ahlfors island theorem that there exists
  a subdomain $\ifsdomain_0\subset\ifsdomain$,
  $ j \in \setof{ 0, 1, 2 } $ and $\lbCoverit\geq 1$
  such that}
  $f^{\iterated \lbCoverit}$ send $\ifsdomain_0$ univalently onto $\lbSector_j$.
If necessary, we remove the first terms of the sequence of $\lbSector_n$ so that $j=0$.

Hence, for all $w \in \lbAnnulus_{\lbAnnidx}$,
  there exists $\lbPreimginifs \in \ifsdomain_0$, such that,
  $f^{\iterated \lbCoverit + j} (\lbPreimginifs) \in \lbSector_j$
  for all $j = 0, \ldots ., \lbAnnidx - 1$
  and $f^{\iterated \lbCoverit + \lbAnnidx} (\lbPreimginifs) = w$.
  
For all $\lbAnnidx$, there is a integer $\lbIfsitact_\lbAnnidx\geq 0$, such that
\[
 \lbAnnlevel_\lbAnnidx + 1 - \lbAnnidx - \lbCoverit + b
    \leq a \lbIfsitact_\lbAnnidx \leq
 \lbAnnlevel_\lbAnnidx-\lbAnnidx-\lbCoverit + \lbAnnmod - 1 - b.
\]
From above and Lemma \ref{lem:lower exp growth}, it follows that
 one can choose a sequence of $p_k$ \adz{having the following properties.
 Let 
 \begin{equation*}
\lbIfsittransit_\lbAnnidx \defeq \lbIfsit_{p_\lbAnnidx} +\lbAnnidx+\lbCoverit,
\end{equation*}
 then
\[
 \lbAnnlevel_\lbAnnidx + 1\leq
 \lbIfsittransit_\lbAnnidx 
 \leq \lbAnnlevel_\lbAnnidx + \lbAnnmod - 1,
\]
inequality \eqref{pressure-explosion-on-the-ifs} is satisfied and for all $ { i = 1, \dots, I_{p_\lbAnnidx} } $,
$ f^{\iterated \lbIfsit _ {p _ \lbAnnidx}} _ {\restricted \ifsdomain ^ {p _ \lbAnnidx} _ i}: \ifsdomain_i \rightarrow \ifsdomain$
is a conformal isomorphism.}

Given $\lbAnnidx$ large choose an open set $\lbSlit_\lbAnnidx$ such that
 $\lbSlitAnnulus_\lbAnnidx\defeq\lbAnnulus_\lbAnnidx\setcomplement\lbSlit_\lbAnnidx$
 is simply connected and
 $\multiplier^{\lbIfsittransit_\lbAnnidx} \poincarefunction^{-1} \p{ \ifsdomain } \subset \lbSlitAnnulus_\lbAnnidx$.
 Choose $Y_k\subset\lbSector_0$
 a univalent preimage of
 $\lbSlitAnnulus_\lbAnnidx \setintersection \multiplier^{\lbIfsittransit_\lbAnnidx} A_0$
 by $f^{\iterated k}$.
 Consider the finite family of sets:
 \adz{
\begin{equation*}
 X_i^{\lbAnnidx} = 
  \multiplier^{ \lbIfsittransit_\lbAnnidx }
  \poincarefunction^{ - 1 } \p{
  \p{f ^ {\lbIfsit_{p_\lbAnnidx}}_{\restricted \ifsdomain_i^{p_\lbAnnidx}}}^{-1}
    \p{\p{f^{\lbCoverit}_{\restricted \ifsdomain}}^{-1} \p{Y_\lbAnnidx}} },
\end{equation*} }
 for $i=1,\dots,I _ {p_\lbAnnidx}$.
 
 Then $X_i^\lbAnnidx$ is a compact subset of
  $\lbSlitAnnulus_\lbAnnidx \setintersection \multiplier^{\lbIfsittransit_\lbAnnidx} A_0$
  and
  $\poincarefunction:
    X_i^\lbAnnidx
      \rightarrow
    \lbSlitAnnulus_\lbAnnidx \setintersection \multiplier^{\lbIfsittransit_\lbAnnidx} A_0$
  is a conformal isomorphism.
  
 We will show that the first partition function of the system of conformal isomorphisms
  $(\poincarefunction_{\restricted X_i^\lbAnnidx}, X_i^\lbAnnidx)_{i=1,\dots,I_\lbAnnidx}$
  taken with respect to the cylindrical metric
  becomes arbitrary large as $\lbAnnidx \tendsto \infty$.
This would imply that the (cylindrical) pressure function for the 
 IFS  $(\poincarefunction_{\restricted X_i^\lbAnnidx}, X_i^\lbAnnidx)_{i=1,\dots,I_\lbAnnidx}$
 is positive for $\lbAnnidx$ large.

Let $w\in \multiplier^{\lbIfsittransit_\lbAnnidx}A_0$
 and consider the first (modified) cylindrical partition function
\footnote{The modification on the metric can be ignored since,
  for $k$ large, $\abs{w}$ and the $\abs{z_i}$ are all greater than $1$.}
 of the system evaluated at $w$:
\begin{equation} \label{eq:LB 1st part fun}
 \partitionfunction(w) \defeq
 \familysum{i=1}{I_{p_\lbAnnidx}} \frac{ |w|^t }{ |z_i|^t \cdot \abs{ \poincarefunction'(z_i) }^t},
\end{equation}
where $z_i=\left(\poincarefunction_{\restricted X_i^\lbAnnidx}\right)^{-1}(w)$.

For all $z_i\in X_i^{\lbAnnidx}$, one has
\begin{equation} \label{eq:LB derivative of ifs}
\poincarefunction'(z_i)=
  \frac{1}{\multiplier^{\lbIfsittransit_\lbAnnidx}}
  \poincarefunction'\left(\frac{z_i}{\multiplier^{\lbIfsittransit_\lbAnnidx}}\right)
  {f^{\lbIfsit _ {p_\lbAnnidx}}}'\left(z_{IFS}^i\right)
  {f^{\lbCoverit}}'\left(z_{\lbSector_0}^i\right)
  {f^{\lbAnnidx}}'\left(z_{WV}^i\right),
\end{equation}
where
\begin{align*}
z_{IFS}^i         & =  \poincarefunction\left(\frac{z_i}{\multiplier^{\lbIfsittransit_\lbAnnidx}}\right),\\
z_{\lbSector_0}^i & =  f^{\iterated \lbIfsit_ {p_\lbAnnidx}}\left(z_{IFS}^i\right), \\
z_{WV}^i          & =  f^{\iterated \lbAnnidx}\left(z_{\lbSector_0}^i\right).
\end{align*}

By construction $|z_i|$ is comparable to $\left|\multiplier^{\lbIfsittransit_\lbAnnidx}\right|$.
 Since $\frac{z_i}{\multiplier^{\lbIfsittransit_\lbAnnidx}} \in A_0$,
 the factor
 $\left|\poincarefunction'\left(\frac{z_i}{\multiplier^{\lbIfsittransit_\lbAnnidx}}\right)\right|$
 is uniformly bounded away from $0$ and $\infty$.
 Finally, the same is true for the derivative of $f_{\restricted \ifsdomain}^{\lbCoverit}$.
 Hence $\partitionfunction(w)$ is comparable to
\begin{equation*}
 \familysum{ i = 1 }{ I_{ p_\lbAnnidx } }
   \frac
    {|w|^t}
    {
      \abs{ \p{ f^{\lbIfsit _{p_\lbAnnidx}} }' \p{z_{IFS}^i} \p{ f^{\lbAnnidx} }' \p{z_{WV}^i} }
    }.
\end{equation*}

From 
 Wiman-Valiron estimates \eqref{eq: wiman-valiron value of the function} and \eqref{eq:wiman-valiron derivative},
 if follows that for any $\lbPtinsec\in\lbSector_\lbAnnidx$, one has
 $ f' (\lbPtinsec) = (1 + \eps_2 (y)) \lbWVcentralidx_\lbAnnidx\frac{f(\lbPtinsec)}{\lbPtinsec}$
 for some bounded $ \eps_2 $.
 This implies,
\begin{equation*}
 \abs{ \p{ f^{\iterated \lbAnnidx} }' \p{ z_{WV}^i } }
  \leq
 C_1^{\lbAnnidx}
 \lbWVcentralidx_0 \lbWVcentralidx_1 \cdots \lbWVcentralidx_{\lbAnnidx - 1}
 |w|,
\end{equation*}
for some constant $C_1>0$.
Consequently, for $\lbAnnidx$ large enough,
\begin{align*}
\partitionfunction(w) &
  \geq 
 C_2^\lbAnnidx \,
 \frac{1}{\lbWVcentralidx_0^t \lbWVcentralidx_1^t \cdots \lbWVcentralidx_{\lbAnnidx - 1}^t} \,
 \familysum{ i = 1 }{ I_{ p_\lbAnnidx } }
  \left|{f^{\lbIfsit_ {p_\lbAnnidx}}}'\left(z_{IFS}^i\right)\right|^{-t} \\  
&
  \geq 
  C_3^\lbAnnidx \,
  \frac{1}{\lbWVcentralidx_0^t \lbWVcentralidx_1^t \cdots \lbWVcentralidx_{\lbAnnidx - 1}^t} \,
  e^{\beta \lbAnnlevel _\lbAnnidx},
\end{align*}
 for suitable constants $C_i>0$.

\adz{
From \eqref{eq: estimate on the size of the WV annulus}
it follows that there exists $ C_5 > 1 $ such that
 \begin{equation} \label{eq: WV maximum modulus comparable to annulus size}
  C_5^{ - 1 } \log \maximummodulus (r_k, f) \leq n_k \leq C_5 \log \maximummodulus (r_k, f).
 \end{equation}
In particular
from the above
 \eqref{eq:wiman-valiron central index estimate},
 and the fact that $ n_k \tendsto \infty $
 it follows that there exists $ C_6 > 0 $ such that
 $ N_j \leq C_6 n_{ j + 1 }^2 $ for all $ j \leq k $.
  Hence for $ j < k $,
  \begin{equation}
   \lbWVcentralidx_0 \lbWVcentralidx_1 \cdots \lbWVcentralidx_{\lbAnnidx - 1}
       \leq C_6^\lbAnnidx (\lbAnnlevel_1\cdots\lbAnnlevel_\lbAnnidx)^2.
  \end{equation}
  Since the map $ f $ is entire
  it follows from \eqref{eq: WV maximum modulus comparable to annulus size}
  that the ratio
  $ \frac{ e^{ \beta \lbAnnlevel_\lbAnnidx } }{ (\lbAnnlevel_1\cdots\lbAnnlevel_\lbAnnidx)^{ 2 t } } $
  grows to $ \infty $ as $ k \tendsto \infty $
  faster than any exponential growth in $ k $.
}
\end{proof}

\begin{figure}
 \def\svgwidth{\columnwidth}
  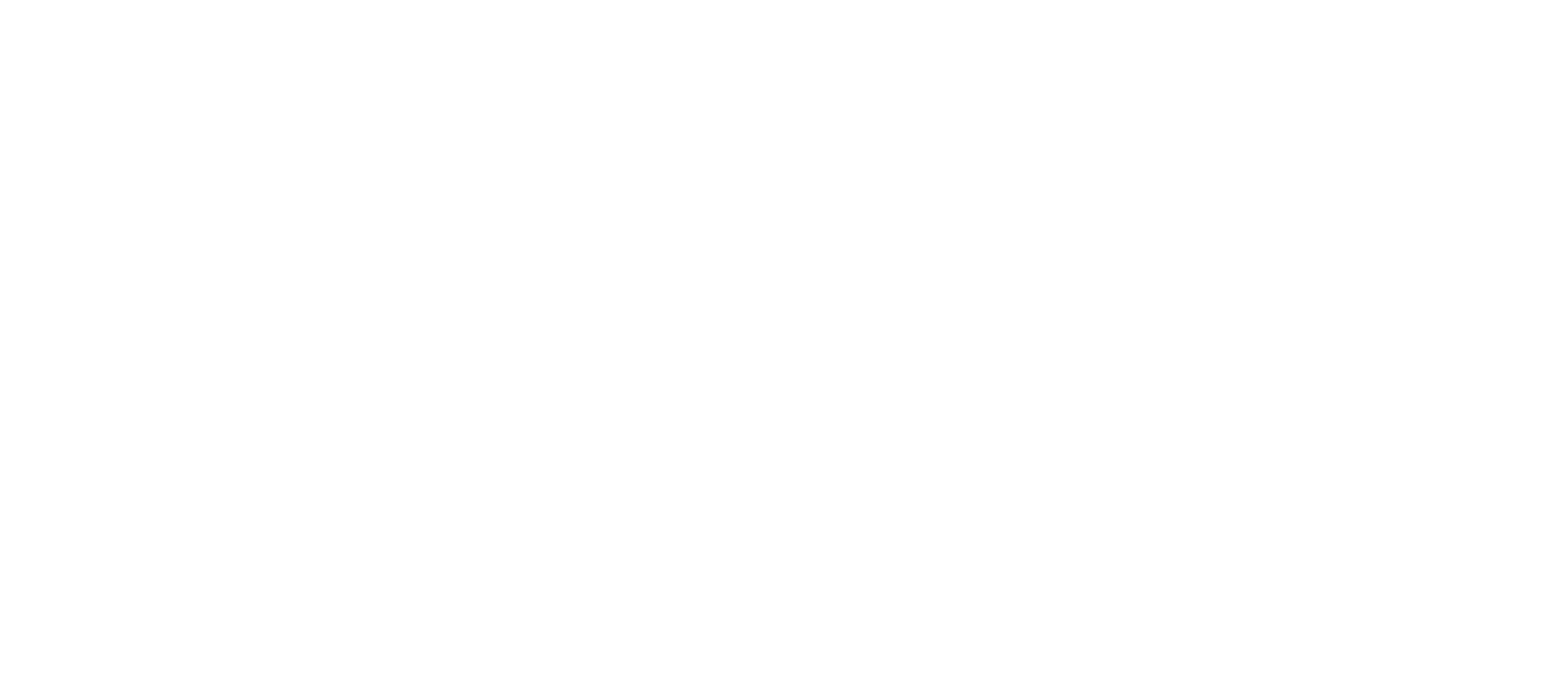
\caption{Illustration of the proof of theorem \ref{thm:evhypdim ge hypdim}.\label{fig:proof of lower bound}}
\end{figure}

\section{Linearisers of polynomials}

The following result, when combined with Theorem~\ref{thm:evhypdim ge hypdim},
 will establish Theorem~\ref{thm:evhypdim of linearisers}.

\adz{
\begin{lem} \label{lem: evhypdim L leq hypdim P for TCE J connected P}
 Let $ L $ be a Poincar{\'e} function for a topological Collet Eckmann polynomial $ P $ such that $ \juliaset (P) $ is connected.
 Then
 \begin{equation}
  \evhypdim(L) \leq \vanishingexponent(L) \leq \hypdim(P).
 \end{equation}
\end{lem}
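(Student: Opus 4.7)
The first inequality $\evhypdim(L) \leq \vanishingexponent(L)$ is Lemma \ref{lem:evhypdim leq vanishing exp} applied to $L$, once I note that $L \in \classb$: by Proposition \ref{prop:sing set of linearisers}, $S(L) = \postsingular(P)$, and the connectivity of $\juliaset(P)$ ensures that $\postsingular(P) \subset \filledjuliaset(P)$ is bounded. The plan is therefore to establish $\vanishingexponent(L) \leq \hypdim(P)$. After using Remark \ref{rem:compute evhypdim only for normalised poincare functions} to normalise $L$ so that $A_f$ is bounded away from the origin, I fix $t > \hypdim(P)$ and aim to show $\cylpartfun(t, L, w) \to 0$ as $w \to \infty$.

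The first substantive step is to re-express $\cylpartfun(t, L, w)$ in terms of the dynamics of $P$. By Observation \ref{obs:preimage structure}, $L^{-1}(w) = \bigsqcup_{n \geq 1} \poincarelevset_n$ with each $\poincarelevset_n$ in bijection with $P^{-n}(w) \cap A_f$ via $z \mapsto \zeta = L(z/\multiplier^n)$. Differentiating the Schr\"oder equation $n$ times gives $L'(z) = \multiplier^{-n} L'(z/\multiplier^n) (P^n)'(\zeta)$; since $z/\multiplier^n$ lies in the compact annulus $\overline{A_0}$ on which $L$ is univalent with non-vanishing derivative, both $|z|/|\multiplier|^n$ and $|L'(z/\multiplier^n)|$ are bounded between positive constants depending only on $L$ and $\multiplier$. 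Combining this with $\cylnorm{\Deriv L(z)} = |L'(z)|\,|z|/|w|$ and $\cylnorm{\Deriv P^n(\zeta)} = |(P^n)'(\zeta)|\,|\zeta|/|w|$, and using that $|\zeta|$ is bounded on $A_f$, I obtain $\cylnorm{\Deriv L(z)} \asymp \cylnorm{\Deriv P^n(\zeta)}$, and hence
\[
 \cylpartfun(t, L, w) \asymp \sum_{n \geq 1} \sum_{\zeta \in P^{-n}(w) \cap A_f} \cylnorm{\Deriv P^n(\zeta)}^{-t}.
\]

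The second step is to bound the right-hand side via the TCE hypothesis. Since $P$ is TCE and $t > \hypdim(P)$, the pressure satisfies $\topologicalpressure(t) < 0$ and the spherical partition function $\partfun^{\sph}(t, P^n, w)$ decays exponentially in $n$; on the bounded set $A_f$ the cylindrical and spherical derivatives of $P^n$ are comparable up to a controlled factor depending only on $|w|$, so this translates into decay of the restricted cylindrical sum above. The decisive additional feature is that $P^{-n}(w) \cap A_f$ is empty unless $n \geq n_0(w)$, with $n_0(w) \to \infty$ as $|w| \to \infty$: a bounded starting point in $A_f$ needs at least $\asymp \log_d \log|w|$ iterations of $P$ (with $d = \deg P$) to reach modulus $|w|$, by B\"ottcher's theorem at $\infty$. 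Combining the exponential decay of the summands with this diverging lower cutoff forces $\cylpartfun(t, L, w) \to 0$, as required.

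The main technical obstacle I anticipate is making the uniformity in $w$ of the TCE pressure bound precise, since $\topologicalpressure(t) < 0$ \emph{a priori} gives only pointwise exponential decay of $\partfun^{\sph}(t, P^n, w)$. I expect this to be handled by foliating a neighbourhood of $\infty$ by Green's function level curves of $P$ and using Koebe distortion along these curves, together with the continuity result in Proposition \ref{prop: continuity of partition functions}, to transport the pointwise bound to a bound uniform in $w$ near $\infty$.
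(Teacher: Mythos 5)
Your first half is fine and coincides with the paper's argument: $L\in\classb$, the first inequality is Lemma~\ref{lem:evhypdim leq vanishing exp}, and the rewriting of $\cylpartfun(t,L,w)$ via Observation~\ref{obs:preimage structure} as a sum over $\zeta\in P^{-n}(w)\cap A_f$, $n\geq\nu(w)$, with the factor $\abs{u L'(u)}$ bounded on the fundamental annulus and $\nu(w)\approx\log_d\log\abs{w}$, is exactly the paper's reduction. The gap is in the second half. Since $\abs{\zeta}\asymp 1$ on $A_f$, each term of your sum is comparable to $\abs{w}^{t}\abs{(P^{n})'(\zeta)}^{-t}$, so what must be proved is that $\abs{w}^{t}\sum_{n\geq\nu}\sum_{\zeta}\abs{(P^{n})'(\zeta)}^{-t}\to 0$: there is a factor $\abs{w}^{t}$ to beat, and neither the TCE pressure bound nor the cutoff beats it. Exponential decay in $n$ starting at $n\geq\nu(w)\approx\log_d\log\abs{w}$ only yields a gain that is polynomial in $\log\abs{w}$. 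Worse, the statement you propose to ``transport'' to a bound uniform in $w$ near infinity is false there: already a single preimage $\zeta\in P^{-\nu(w)}(w)\cap A_f$ has $\abs{(P^{\nu(w)})'(\zeta)}$ of order $d^{k}\abs{w}$ with $d^{k}\approx\log\abs{w}$, so it alone contributes about $\abs{w}^{t}(\log\abs{w})^{-t}$ to $\partfun^{\sph}(t,P^{\nu(w)},w)$, which blows up as $w\to\infty$; no distortion along Green's level curves, nor Proposition~\ref{prop: continuity of partition functions}, can produce a $w$-uniform bound $Ce^{-\pi_t n}$, because the quantity genuinely grows with $\abs{w}$ at the relevant depths.

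The missing mechanism, which is the actual content of the paper's Claim~\ref{lem:partition functions tends to 0}, is a splitting of each backward orbit at the first $k=k(w)$ steps, chosen so that $P^{-k-1}(w)$ lies in a \emph{fixed} fundamental annulus $F\compactlyembedded\C\setminus\filledjuliaset(P)$ of the basin of infinity. Along those $k$ steps the explicit asymptotics $P'(\xi)=(1+\bigo(1/\xi))\,d\,\xi^{d-1}$ give $\abs{(P^{k})'(\zeta)}\geq(1-\varepsilon)^{k+1}d^{k}\abs{w}/\abs{\zeta}$, and it is this derivative growth that absorbs the factor $\abs{w}^{t}$. The TCE hypothesis is used only through the uniform bound \eqref{eq:unif bound tx to tce} for base points in the compact set $F$ (so no uniformity near infinity is needed, which is why your Green's-lines step is as unnecessary as it is unavailable), and it serves to sum over the depths $n-k$ of the remaining, bounded part of the orbit. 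Finally, since there are at most $d^{k}$ branches at depth $k$, the total is bounded by a constant times $\bigl(d/((1-\varepsilon)^{t}d^{t})\bigr)^{k}$, which tends to $0$ as $w\to\infty$ precisely because $t>\hypdim(P)\geq 1$ (this is where connectedness of $\juliaset(P)$ enters); your outline never identifies where $t>1$ is used, and without this decomposition and that counting-versus-derivative step the final limit does not follow.
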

}

\begin{proof}
   The first inequality is given by Lemma \ref{lem:above ev hyp dim}.
   Hence  we must only prove the second. 
   
  Fix $r_0$ small so that the restriction of $\poincarefunction$ to a
  neighborhood of the disk $D_0 = \Disk (0, r_0)$ is univalent. Let $A_0 = D_0
  \setcomplement \Disk (0, r_0 / | \multiplier |)$, $D_P = \poincarefunction
  (D_0)$, $A_P = D_P \setcomplement \PoincareFunctionPolynomial^{- 1} (D_P)$
  and
  
  \begin{align*}
    \poincarefunctioninverse = & \poincarefunction_{|D_0}^{- 1} .
  \end{align*}
  
  Note that, for $w \notin D_P$, the sets $ P^{- n} (w) \cap A_P$ are empty for $n \leq \nu$,
  with some $\nu = \nu (w)$ such that
  $\nu \tendsto \infty$ as $w \tendsto \infty$. And more precisely $\nu (w)$ grows
  like $ \frac{ \log \log |w| }{ \log d }$.
  From Observation  \ref{obs:preimage structure}, it follows that,
  for any $ t > \hypdim P $ and $w$ with $|w|$ large,
  \begin{equation*}
  \mcylpartfun \left( t, \poincarefunction, w \right) =
     \familysum{n \geq \nu}{} \familysum{\zeta \in P^{- n} (w)
     \setintersection A_P}{} \frac{1}{\left| u \poincarefunction' (u)
     \right|^t} \frac{| w |^t}{\left| {P^{\iterated n}}' (\zeta) \right|^t},
  \end{equation*}
  where $u = \poincarefunctioninverse (\zeta)$.
  
  Since $A_0$ is fixed and $\poincarefunction$ is close to the identity near
  $0$, the factor $u \poincarefunction' (u)$ is bounded away from $0$ and
  $\infty$ uniformly in $u \in A_0$. It follows that it is enough to verify the following claim. 
  
  \begin{claim}
    \label{lem:partition functions tends to 0}Denote by
    $\euclideanpartitionfunction (\DimensionParameter,
    \PoincareFunctionPolynomial^{\Iterated{n}}, w)$ the first partition
    function for $\PoincareFunctionPolynomial^{\Iterated{n}}$ in the euclidian
    metric:
    \begin{equation}
      \euclideanpartitionfunction (\DimensionParameter,
      \PoincareFunctionPolynomial^{\Iterated{n}}, w) = \sum_{z \in
      \PoincareFunctionPolynomial^{- n} (w)}
      \frac{1}{ {}^{} \abs{ \p{ \PoincareFunctionPolynomial^{\Iterated{n}} }' (z) }^{\DimensionParameter}} . 
    \end{equation}
    Then, the series $\familysum{n \geq \nu}{} |w|^{\DimensionParameter}
    \euclideanpartitionfunction (\DimensionParameter,
    \PoincareFunctionPolynomial^{\Iterated{n}}, w)$ converges to $0$ as $w
    \tendsto \infty$.
  \end{claim}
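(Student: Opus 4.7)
The plan is to pass from the Euclidean to the spherical metric, apply the TCE hypothesis to obtain exponential decay in $n$, and absorb the spherical blow-up at infinity using the fact that for $n \geq \nu(w)$ all preimages of $w$ remain uniformly bounded. More precisely, I first show that there is a constant $R_1 = R_1(\PoincareFunctionPolynomial)$ such that, for $|w|$ large and all $n \geq \nu(w)$, every $z \in \PoincareFunctionPolynomial^{-n}(w)$ satisfies $|z| \leq R_1$; since $|\PoincareFunctionPolynomial(z)| \geq c|z|^d$ for $|z|$ large, iterating forces $|z|^{d^n} \lesssim |w|$, so $|z|$ is bounded once $n \gtrsim \log\log|w|/\log d \sim \nu(w)$. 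For such bounded preimages and large $|w|$, the identity $\sphericalnorm{\Deriv \PoincareFunctionPolynomial^{\Iterated{n}}(z)} = |(\PoincareFunctionPolynomial^{\Iterated{n}})'(z)|(1+|z|^2)/(1+|w|^2)$ yields
\[|w|^t\, \euclpartfun(t, \PoincareFunctionPolynomial^{\Iterated{n}}, w) \leq C_1\, |w|^{-t}\, \partfun^{\sph}(t, \PoincareFunctionPolynomial^{\Iterated{n}}, w).\]

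The core of the argument is then to establish the bound $\partfun^{\sph}(t, \PoincareFunctionPolynomial^{\Iterated{n}}, w) \leq C_2\, |w|^t\, e^{-\alpha n}$ for some $\alpha > 0$, uniformly for $|w|$ large and $n \geq \nu(w)$. Granting this, the previous estimate gives $|w|^t\, \euclpartfun(t, \PoincareFunctionPolynomial^{\Iterated{n}}, w) \leq C_3\, e^{-\alpha n}$, and summing the geometric series from $n = \nu(w)$ produces
\[\sum_{n \geq \nu(w)} |w|^t\, \euclpartfun(t, \PoincareFunctionPolynomial^{\Iterated{n}}, w) \leq \frac{C_3}{1 - e^{-\alpha}}\, e^{-\alpha \nu(w)} \tendsto 0 \quad \text{as } |w| \tendsto \infty,\]
since $\nu(w) \tendsto \infty$. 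To prove the spherical bound, I would combine the uniform exponential decay of $\partfun^{\sph}(t, \PoincareFunctionPolynomial^{\Iterated{m}}, y)$ on compact subsets of $\riemannsphere \setminus \postsingular(\PoincareFunctionPolynomial)$~-- which follows from $\topologicalpressure(t) < 0$ (provided by TCE and $t > \hypdim \PoincareFunctionPolynomial$) via the standard theory of~\cite{Przytycki1999,PrzytyckiRiveraLetelierSmirnov2003}~-- with the chain-rule decomposition
\[\partfun^{\sph}(t, \PoincareFunctionPolynomial^{\Iterated{n}}, w) = \sum_{y \in \PoincareFunctionPolynomial^{-\nu(w)}(w)} \sphericalnorm{\Deriv \PoincareFunctionPolynomial^{\Iterated{\nu(w)}}(y)}^{-t}\, \partfun^{\sph}(t, \PoincareFunctionPolynomial^{\Iterated{n - \nu(w)}}, y).\]
Since all $y \in \PoincareFunctionPolynomial^{-\nu(w)}(w)$ lie in the bounded set $\{|y| \leq R_1\}$ by the localisation step applied at $n = \nu(w)$, the uniform decay applies to each inner factor. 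This reduces the task to controlling the $w$-dependence of $\partfun^{\sph}(t, \PoincareFunctionPolynomial^{\Iterated{\nu(w)}}, w)$, which I would handle via the B\"ottcher conjugacy of $\PoincareFunctionPolynomial$ with $\zeta \mapsto \zeta^d$ on the basin of infinity, reducing to an elementary computation for the power map that gives the needed $|w|^t$-growth.

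The main obstacle is this uniform spherical bound near $\infty$. Since $\infty$ is a super-attracting fixed point of $\PoincareFunctionPolynomial$ on the Riemann sphere, it lies in $\postsingular(\PoincareFunctionPolynomial)$, so Proposition~\ref{prop: continuity of partition functions} does not extend to a neighbourhood of $\infty$ and the required uniformity cannot be obtained by compactness alone. Extracting precisely the $|w|^t$-rate (and not a cruder $(1+|w|^2)^t$, which would be insufficient since $|w|^{-t}(1+|w|^2)^t \approx |w|^t$ grows while $e^{-\alpha \nu(w)}$ decays only like $(\log|w|)^{-\alpha/\log d}$) is exactly what the B\"ottcher-based analysis near $\infty$ is designed to provide.
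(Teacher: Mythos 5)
Your plan follows the same skeleton as the paper's argument (localise the preimages for $n\geq\nu(w)$, split each backward orbit into a ``near-infinity'' head and a tail controlled by negative pressure), but two steps do not hold as stated. First, the uniformity step: you apply exponential decay of the partition function at the base points $y\in \PoincareFunctionPolynomial^{-\nu(w)}(w)$, justified only by $|y|\leq R_1$. That is not enough. The closed disc of radius $R_1$ contains the filled Julia set, hence the finite part of $\postsingular(\PoincareFunctionPolynomial)$, so it is not a compact subset of $\riemannsphere\setminus\postsingular(\PoincareFunctionPolynomial)$ and your cited uniform decay does not apply on it. Worse, by the very definition of $\nu(w)$ (first time a preimage of $w$ enters the fundamental region $A_P$, which abuts $\juliaset(\PoincareFunctionPolynomial)$ at the repelling fixed point), at time $\nu(w)$ some of the $y$'s lie in $A_P$ and may be arbitrarily close to the Julia set as $w$ varies; to place them in a fixed compact subset of the basin of infinity you would need an \emph{upper} bound $\nu(w)\leq \log_d\log|w|+O(1)$, which you neither state nor prove (only the lower bound is easy, and only the lower bound is what the paper uses). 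The same defect hits your B\"ottcher step: the power-map behaviour $P'(\xi)=(1+O(1/\xi))\,d\xi^{d-1}$, equivalently B\"ottcher derivative comparable to $1$, holds only while the orbit stays near infinity, whereas the orbit segments from $y$ to $w$ start near the Julia set. The paper avoids both problems by decomposing not at $\nu(w)$ but at an earlier time $k(w)\leq\nu(w)$, chosen so that $\PoincareFunctionPolynomial^{-k}(w)$ lies in a \emph{fixed} fundamental annulus $F$ compactly contained in the basin of infinity and taken close to infinity (possible because all time-$k$ preimages share the Green value $G(w)/d^{k}$); the negative-pressure estimate is then applied only at points of $F$, and the head estimate $|(\PoincareFunctionPolynomial^{k})'(\zeta)|\geq(1-\varepsilon)^{k+1}d^{k}|w|/|\zeta|$ only uses orbits staying beyond $F$.

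Second, the bookkeeping: even granting the inner uniform decay, your stated goal for the head term, $\partfun^{\sph}(t,\PoincareFunctionPolynomial^{\nu},w)\lesssim|w|^{t}$, is insufficient. Feeding it into your chain of inequalities gives $\familysum{n\geq\nu}{}|w|^{t}\euclpartfun(t,\PoincareFunctionPolynomial^{n},w)=O(1)$, i.e.\ boundedness, not convergence to $0$; your target bound $\partfun^{\sph}(t,\PoincareFunctionPolynomial^{n},w)\leq C_2|w|^{t}e^{-\alpha n}$ at $n=\nu$ already requires the head to carry an extra factor $e^{-\alpha\nu}$ beyond the $|w|^{t}$, and your ``main obstacle'' paragraph is aimed only at the power of $|w|$. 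In the correct accounting this extra decay comes from comparing the at most $d^{k}$ preimages with the derivative lower bound of order $d^{k}|w|/|\zeta|$, yielding a factor like $\p{d^{1-t}(1-\varepsilon)^{-t}}^{k}$, and it is precisely here that $t>1$ (available since $t>\hypdim \PoincareFunctionPolynomial\geq 1$) is indispensable~-- the paper's proof ends with exactly this observation. Your sketch never invokes $t>1$, which is a reliable sign that the mechanism making the series tend to $0$ has not been captured.
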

  
  \begin{subproof}[Proof of the Claim]
  The polynomial $\PoincareFunctionPolynomial$ is topologically Collet-Eckmann and  $t > \hyperbolicdimension P$.
   Hence the topological pressure associated to $\PoincareFunctionPolynomial$ is negative at exponent $t$.

   From that it follows that for any bounded set $F$ compactly embedded in the basin of attraction of $\infty$,
   there exists $C_F>0$ and $\pi_t>0$
   such that for all $\zeta\in F$ and all $m\geq 0$,
   \begin{equation}\label{eq:unif bound tx to tce}
     \sum_{z\in \PoincareFunctionPolynomial^{-m}(\zeta)} \abs{ { P^m }'(z) }^{ -t } \leq C_F e^{-\pi_t m}.
   \end{equation}

    Suppose $F$ is an annulus which is a
    foundamental domain for the dynamics of $\PoincareFunctionPolynomial$ in
    the basin of attraction of $\infty$.
    
    Given $ \eps > 0 $, we can choose $F$ close enough to $\infty$ so that for $w$ in the
    unbounded component $U$ of the complement of $F$, there exists
    $ k = k (w) \in \naturalnumbers $, with $ \nu (w) \geq k(w) \tendsto \infty $ as $ w \tendsto \infty $,
    and satisfying the following.
    The set $ P^{ - k - 1 } (w) $
    is a subset of $F$
    and for all $\zeta \in  \PoincareFunctionPolynomial^{-k} (w)$,
    we have
    $\left| \left( \PoincareFunctionPolynomial^{\Iterated{k}} \right)' (\zeta) \right| 
    \geq
    (1 - \eps)^{k+1} \PoincareFunctionPolynomialDegree^k \frac{|w|}{| \zeta |}$.
    Indeed,
    $P' (\xi) = \left(1 + \bigo \left( \frac{1}{\xi} \right) \right) d \xi^{d - 1}$
    for $\xi\in U\union F$,
    which implies
    \begin{eqnarray*}
      {P^k}' (\zeta) & = & P' (\zeta) P' (P (\zeta)) \cdots P' (P^{k - 1} (\zeta))\\
      & = &
      \left( 1 + \bigo \left( \frac{1}{\zeta} \right) \right)
      \left( 1 + \bigo \left( \frac{1}{\zeta^d} \right) \right)
      \cdots
      \left( 1 + \bigo  \left( \frac{1}{\zeta^{d^{k - 1}}} \right) \right)
      d^k \zeta^{d^k - 1} .
    \end{eqnarray*}
    
    Now we suppose $F$ fixed and consider $w\in U$,
    \begin{align*}    
     \familysum{n \geq \nu}{} |w|^{\DimensionParameter}
     \euclideanpartitionfunction (\DimensionParameter, \PoincareFunctionPolynomial^{\Iterated{n}}, w)
         & \leq 
         \familysum{n \geq \nu}{}
         \familysum{\zeta\in \PoincareFunctionPolynomial^{-k}(w)}{}
           \frac{|\zeta|^t}{\left((1-\eps)^t d^t\right)^k}
         \familysum{z\in \PoincareFunctionPolynomial^{-(n-k)}(\zeta)}{}
           \left|{\PoincareFunctionPolynomial^{n-k}}'(z)\right|^{-t} \\
     & \leq
         \frac{C_F}{\left((1-\eps)^t d^t\right)^k}
         \familysum{n \geq \nu}{}
           \,e^{-(n-k)\pi_t}
           \familysum{\zeta\in \PoincareFunctionPolynomial^{-k}(w)}{}
             |\zeta|^t ,
    \end{align*}
    where we have applied \eqref{eq:unif bound tx to tce} with $m=n-k\geq 0$.
    The set $F$ is bounded
    and the set $\PoincareFunctionPolynomial^{-k}(w)$ has at most $d^k$ points,
    thus,
    \begin{equation}
      \familysum{n \geq \nu}{}
         |w|^{\DimensionParameter}
         \euclideanpartitionfunction (\DimensionParameter, \PoincareFunctionPolynomial^{\Iterated{n}}, w)
       \leq 
       C_{F,t} \left(\frac{d}{(1-\eps)^t d^t}\right)^k 
    \end{equation}
    for some constant $C_{F,t}>0$ depending only on $F$ and $ t > \hypdim \PoincareFunctionPolynomial $.
    
   As $ \DimensionParameter > 1 $, it is possible to choose $\eps>0$
    so that the right hand side tends to $0$ as $w\tendsto\infty$.
\end{subproof}

{\  }
\end{proof}

\begin{proof}[Proof of Theorem \ref{thm:evhypdim of linearisers}]
 Suppose that $L$ is a Poincar\'e function for a topological Collet-Eckmann polynomial
  $P$ with connected Julia set.
Then $ \evhypdim L \geq \hypdim P $ by 
  Theorem \ref{thm:evhypdim ge hypdim} and 
  $\evhypdim L \leq \vanishingexponent(L) \leq \hypdim P < 2$ 
  by Lemma \ref{lem: evhypdim L leq hypdim P for TCE J connected P} and Theorem~\ref{thm:dim of julia of tce}.
  Recall that $ \evhypdim \geq 1 $, so 
  we have  
     \[ 1\leq \evhypdim(L) = \vanishingexponent(L) = \hypdim(P) < 2, \]
    establishing the first claim of the theorem. 
    If $L$ is of disjoint type, then $\hypdim(L)<2$ by 
    Corollary~\ref{cor:finitess exponent and hypdim}. 
     
     Moreover,
    if $P$ is not conformally conjugate to a Chebyshev polynomial or a power map, 
   then $\hyperbolicdimension P > 1 $ by 
   Theorem \ref{thm:dim of julia of tce}.
 \end{proof}

\section{Non invariance inside quasiconformal classes}

\begin{proof}[Proof of Corollary \ref{cor:noninvariance of evhypdim inside qc classes}]
  For $c$ in the main hyperbolic component $H$, let $\poincarefunction_c$ be the normalised
  Poincar{\'e} function of the unique repelling fixed point of the polynomial
  $P_c$. Then, by Proposition \ref{prop:sing set of linearisers}, the map $\poincarefunction_c$ has bounded postsingular set.
  Moreover its order $\rho$ is given by the following:
  \[
  \rho = \frac{\log 2}{\log | \lambda |}
  \]
  where $\lambda$ is the multiplier of the repelling fixed point
  (see {\cite{EremenkoSodin1990}}).
  
  Since the polynomial $P_c$ is hyperbolic, its hyperbolic dimension is equal
  to the dimension of its Julia set. Hence by Theorem \ref{thm:evhypdim of linearisers},
  \[
  \eventualhyperbolicdimension  \poincarefunction_c = \hausdorffdimension \juliaset (P_c) .
  \]
  The dimension of the Julia set is real analytic in $ c \in H $ and nonconstant, see
  {\cite{ruelle1982}}. Moreover, since the Julia set is a nondegenerate
  continuum, its dimension is bounded from below by $1$. Meanwhile, all
  nonzero $c$ inside this hyperbolic components are quasiconformally
  conjugated to each other. Then, from Remark \ref{rem:qc conj implies qc equiv of lin},
  it follows that the corresponding Poincar{\'e} functions are quasiconformally equivalent.

  Now we fix some $c\neq 0$ in the main hyperbolic component
  and we use the notation $\poincarefunction$ for $\poincarefunction_c$.
  It remains to show that for a choice of $\RescalingParameter>0$ the function
  $f=\RescalingParameter\poincarefunction$
  is quasiconformally conjugate to the map \adz{$ E (z) \defeq e^{ -2 } e^z $ (which is globally conjuated to $ z \mapsto e^z - 2 $)}
  on a neighbourhood of the Julia sets.
  
  Our aim is to apply Theorem 3.1 of \cite{Rempe2009}.
  We first need to recall a specific definition from this statement.
 If $f$ and $g$ are entire functions, they are said \textit{quasiconformaly equivalent near infinity}
  if there are quasiconformal functions $\ph,\psi:\complexnumbers\rightarrow\complexnumbers$
  such that $\ph\left(f(z)\right)=g\left(\psi(z)\right)$ whenever
  $|f(z)|$ and $|g(\psi(z))|$ are large enough.
  
  Then, by \cite{Rempe2009}, Theorem 3.1, if the functions $f$ and $g$ are of disjoint-type
  and quasiconformally equivalent near infinity,
  with the equivalence holding on disjoint-type tracts,
  then they are quasiconformally conjugated on a neighborhood of their respective Julia sets.

  We will show that $ \poincarefunction $ and $ E $ are quasiconformally equivalent near infinity.
  Before that we need to check that the map $ L $ has only one tract
  and that this tract is a quasicircle.
  
Since the closure of the critical orbit is a subset of the interior of the filled Julia set $ \filledjuliaset (P_c) $ of 
  the polynomial $P_c$,
  we can choose the tracts of $\poincarefunction$
  in a such way that
  their boundaries is the set $\Gamma=\poincarefunction^{-1}(\juliaset(P_c))$.
  Since the Julia set of $ P_c $ is a Jordan curve,
  the set $ \Gamma $ consists of simple (unbounded) curves.
  Moreover it follows from the invariance property of the Julia set 
  that $ \multiplier^n \Gamma = \Gamma $ for all $ n \in \integers $.
  
 Since $ \poincarefunction $ is univalent near $ 0 $
  and since the Julia set of $ P_c $ is a Jordan curve,
  the intersection of 
  $ \Gamma $ with a small disk $ D = \disk \p{ 0, r_0 } $ around $ 0 $ is made up of simple curves
  consisting of the points $ z \in D $ such that $ L (z) \in \juliaset (P_c) $.
 For $ r_0 > 0 $ small enough there is only one connected component $ \Gamma_0 $ of $ \Gamma $ that intersects $ D $.
  Moreover this component is invariant by multiplication by $ \multiplier $.
  This implies that there is only one tract.
  Indeed if $ \Gamma_1 $ is a connected component of $ \Gamma $
  then for any $ n \geq 0 $ large enough
  $ \multiplier^{ -n } \Gamma_1 $ intersects $ D $
  hence $ \Gamma_0 $ thus $ \Gamma_1 $ intersects $ \multiplier^n \Gamma_0 \subset \Gamma_0 $.

 The set $\juliaset(P_c)$ is a quasicircle.
 For any finite collection of points $ z_1, z_2, z_3, \dots $ in $ \Gamma $
 one can find $ n $  such that $ \multiplier^{ -n } z_1, \multiplier^{ -n } z_2, \multiplier^{ -n } z_3, \dots $ 
 all belong to $ D \intersection \Gamma $.
 If follows from that and from the geometric characterization of quasicircle
 (see e.g. \cite{Ahlfors1963})
 that the curve $ \Gamma \union \setof{ \infty } $ is also a quasicircle.
 As a consequence the map $\poincarefunction$
 has only one tract over $ \complexnumbers \setcomplement \filledjuliaset (P_c) $
 and it is a quasidisk.

  The Julia set $ \juliaset (P_c) $ is bounded and bounded away from $ 0 $,
  hence the preimage of $ \complexnumbers \setcomplement \filledjuliaset (P_c) $
  by the map $ E $
  is a (single) tract of $ E $ and a quasidisk.
  By Remark 2.7 of \cite{Rempe2009}, it follows that $\poincarefunction$
  is quasiconformally equivalent to $ E $ on a tract, i.e.
  there are quasiconformal functions $\ph,\psi:\complexnumbers\rightarrow\complexnumbers$
  such that $\ph \p{ E (z) } = \poincarefunction \p{ z } $
  whenever $ \realpart (z) $ is large enough.
\end{proof}

\section{A hyperbolic entire function with hyperbolic dimension \texorpdfstring{$ 2 $}{2}}

\begin{proof}[Proof of Corollary \ref{cor:exponentiallineariser}]
Let $f(z)=\twopii e^z$. The only finite singular value of $f$ is $0$ and it is sent onto the repelling fixed point $\twopii$.
 Moreover, the two-dimensional Lebesgue measure of the escaping set of $f$ is $0$ (see for example \cite{EremenkoLyubich1992}, Theorem 7).
 As a consequence, for almost all point $z$ in $\complexnumbers$, there exists $\delta>0$
 and a sequence of natural numbers $n_k\tendsto\infty$
 such that the spherical distance between $f^{n_k}(z)$
 and the postsingular set $\{0,\twopii,\infty\}$ of $f$
 is at least $\delta$\footnote{In other words, almost all points belong to the radial Julia set of $f$.}.
By the main result of \cite{Rempe2009a},
 it follows that the hyperbolic dimension of $f$ is $2$.
Hence, it follows from Theorem \ref{thm:Eventual hyperbolic dimension of linearisers II} that
 $\eventualhyperbolicdimension \poincarefunction=2$.

Now, let $\RescalingParameter=\poincarefunction'(0)$ and let $r=\frac{\pi}{8|\lambda|}$.
 We show that if $|\RescalingParameter|<\frac{1}{20}$
 then the image of the disk $\disk\left(0,r\right)$ by $\poincarefunction$
 is a compact subset of itself.
 
Indeed, the mapping $f$ is univalent on the disk $\disk(\twopii,\pi)$,
 and by the $1/4$ Theorem,
 $$f\left(\disk(\twopii,\pi)\right)\supset\disk\left(\twopii,\frac{2\pi\cdot\pi}{4}\right)\supset\disk(\twopii,\pi).$$
 Hence the lineariser $\poincarefunction$ has an inverse defined on that disk.
Hence by the $1/4$ Theorem, $\poincarefunction$ is univalent on
$\disk(0, \pi / (4|\lambda|))
 \subset
 \left(\poincarefunction_{\restricted \disk(\twopii,\pi)}\right)^{-1}\left(\disk\left(\twopii,\pi\right)\right)$.
By the Koebe Distortion Theorem, we have
$$\poincarefunction\left(\disk(0,r)\right)\subset\disk(\twopii,2r|\RescalingParameter|)=\disk(\twopii,\pi/2).$$
The claim follows then from the assumption $|\lambda|<1/20$.

According to Proposition \ref{prop:sing set of linearisers},
 the singular set of $\poincarefunction$ is the postsingular set of $f$.
Hence it consists only of the two points $0$ and $\twopii$.
As a consequence, the disk $\disk\left(0,r\right)$ is a subset of the Fatou set of $\poincarefunction$
 which absorbs all the singular orbits. Hence, the function $\poincarefunction$ is hyperbolic and its Fatou set is connected.
\end{proof}

\begin{appendices}

\section{Existence of an IFS}\label{sec:hypdim}

In this section we show that near every point of the Julia set of an entire
function $f$, there is a finite conformal iterated function system, made of
inverse branches of iterates of $f$, whose dimension is arbitrarily close to
the hyperbolic dimension of $f$. Then we use this to bound the critical
exponent of local pressures from below by the hyperbolic dimension.

This 
is used
when estimating the eventual hyperbolic dimension
 of a Poincar\'e function from below.

\begin{lem}
  \label{lemma:existence-of-an-iterated-function-system}Let $f$ be an entire
  function and $U$ an open set intersecting the Julia set of $f$. Then, for
  any $\eps > 0$, there exists a finite conformal iterated function system
  defined on a subset of $U$ made of inverse branches of iterates of $f$, such
  that the dimension of its limit set is at least $\hyperbolicdimension f -
  \eps$.
\end{lem}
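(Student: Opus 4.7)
The plan is to realise the supremum in the definition of $\hypdim f$ by a standard conformal iterated function system sitting near a hyperbolic set of near-maximal dimension, and then transport this IFS into $U$ via a univalent inverse branch of an iterate of $f$.

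First, by the definition of $\hyperbolicdimension f$, pick a hyperbolic set $K$ for $f$ with $\hausdorffdimension K > \hyperbolicdimension f - \eps/2$. Since $f^n$ is uniformly expanding on $K$ for some $n$, the classical theory of conformal expanding repellers (Markov partitions combined with Bowen's formula) supplies a finite conformal IFS $\phi_1,\dots,\phi_M$ on a simply-connected neighbourhood $V_0$ of $K$: each $\phi_i$ is a univalent inverse branch of $f^n$, the images $\phi_i(V_0)$ are pairwise disjoint subsets of $V_0$, and after passing to a topologically mixing subpiece, and if necessary to a higher iterate, the Hausdorff dimension of the limit set $\Lambda$ can be made at least $\hyperbolicdimension f - \eps$.

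Next, I transport this IFS into $U$. Since $K\subset\juliaset (f)$ and $U\cap\juliaset (f)\neq\emptyset$, the blowing-up property of the Julia set supplies, for arbitrarily large $s$, preimages of any chosen $z_0\in K$ lying in $U$; the inverse branch $\psi$ of $f^s$ sending $z_0$ to such a preimage extends univalently to $V_0$ because $K$, being hyperbolic, is bounded away from the postsingular set, and for $s$ large the contraction factor of $\psi$ is small enough that $\psi(V_0)\subset U$. Setting $W:=\psi(V_0)\subset U$, the conjugated system $\tilde\phi_i := \psi\circ \phi_i\circ f^s$ (with the last factor restricted to $W$, where it inverts $\psi$) forms a conformal IFS on $W$ with pairwise disjoint images $\psi(\phi_i(V_0))$; its limit set is $\psi(\Lambda)$, of Hausdorff dimension $\hausdorffdimension\Lambda\geq\hyperbolicdimension f-\eps$. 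Each $\tilde\phi_i$ is built from inverse branches of iterates of $f$ (namely $\psi$ and $\phi_i$) composed with the forward iterate $f^s$, so the IFS is of the desired form on sub-domains of $W\subset U$.

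The main obstacle is the transfer step: guaranteeing that the univalent branch $\psi$ extends to the whole of $V_0$ with image in $U$. For rational maps this is standard, but in the transcendental setting one must use the full strength of the definition of a hyperbolic set---compact, forward invariant, uniformly expanding---to ensure $K$ is separated from the postsingular set, and then exploit the strong contraction of $\psi$ at large $s$ to place $\psi(V_0)$ inside $U$. The remaining verifications (disjointness of images, conformal invariance of Hausdorff dimension, and the lower bound on $\hausdorffdimension\Lambda$) are routine.
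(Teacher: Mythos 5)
Your transfer step is where the argument breaks down. You assert that $K$, ``being hyperbolic, is bounded away from the postsingular set'', and you use this to continue the inverse branch $\psi$ of $f^{s}$ univalently over the whole neighbourhood $V_0$ and to make $\psi(V_0)$ small. Neither claim is available for a general entire function: a hyperbolic set may perfectly well meet $\postsingular(f)$ (already in this paper, for $f(z)=2\pi i e^{z}$ the repelling fixed point $2\pi i$ is a postsingular point, and $\{2\pi i\}$ is a hyperbolic set; hyperbolic sets of large dimension can likewise intersect or accumulate on $\postsingular(f)$), and there are entire functions whose postsingular set is dense in $\C$, so that no open set $V_0$, let alone a simply connected neighbourhood of $K$, avoids it. Without such avoidance there is no reason why any branch of $f^{-s}$ sending $z_0$ to a preimage in $U$ should extend univalently over the fixed, possibly large, domain $V_0$, nor why its image should be small enough to lie in $U$. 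This is precisely the difficulty the paper's proof is built to avoid: it never extends an inverse branch over a large set, but uses the blow-up property only to get univalence of \emph{forward} iterates on small pieces (a set $V\subset D$ with $f^{N}|_V$ univalent onto $U'\subset U$, and small sets $E_i\subset U'$ with $f^{M}|_{E_i}$ univalent into $D_i$), and then shows by a pressure/partition-function estimate that these two fixed ``connector'' branches contribute only bounded distortion compared with arbitrarily long words of the original IFS, so the dimension is preserved.

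There is a second, structural, gap: even granting $\psi$, your conjugated maps $\tilde\phi_i=\psi\composedwith\phi_i\composedwith f^{s}$ are \emph{not} inverse branches of iterates of $f$. Indeed, if $\phi_i$ inverts $f^{n}$ on $V_0$, then $f^{\,n+s}\composedwith\tilde\phi_i=f^{n}\composedwith\phi_i\composedwith f^{s}=f^{s}$ on $W$, which is not the identity, and no iterate of $f$ inverts $\tilde\phi_i$. So the system you produce, although a conformal IFS in $U$ with limit set $\psi(\Lambda)$ of the right dimension, fails the requirement ``made of inverse branches of iterates of $f$'' in the statement~--~and this requirement is essential, since Lemma~\ref{lem:lower exp growth} (the way this lemma is used) needs domains $D_i^p\subset D$ on which genuine iterates $f^{\nu_p}$ are conformal isomorphisms onto $D$. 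Repairing this forces you to compose only inverse branches, i.e.\ to include a return branch from $U$ back to the original domain, which is exactly the paper's construction. (Your first step, extracting a full-shift IFS of dimension at least $\hyperbolicdimension f-\eps$ from a hyperbolic set, is taken for granted at about the same level of detail as in the paper, which simply starts from such a system, so that part is not the issue.)
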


\begin{rem}
  This is also true in more generality for Ahlfors island maps.
\end{rem}

\begin{proof}
  Let $\ifsname = \left( \ifsdomain, \left( \ifsrange_i, \ph_i \right)_{i \in I} \right)$ \adz{be} a
  finite conformal iterated function whose limit set has dimension at least
  $\hyperbolicdimension f - \eps$, where, for all $i$, $\ifsrange_i \subset \ifsdomain$ and
  $\ph_i : \ifsdomain \rightarrow \ifsrange_i$ is an inverse branch of some iterate
  $f^{\iterated m_i}$ of $f$. 
  
  Since $\ifsdomain$ intersects the Julia set of $f$, there exists a natural number $N$
  such that $f^{\iterated N} (\ifsdomain) \intersection U$ has nonempty interior and intersects $ \juliaset (f) $.
  Let $V \subset \ifsdomain$ closed with nonempty interior such that
  $f^{\iterated N}_{\restricted V} : V \rightarrow
  U' \defeq f^{\iterated N} (V) \subset U$ is univalent and $U'$ intersects
  the Julia set of $f$.
  
  Since $U'$ intersects the Julia set of $f$, there exists a natural number
  $M$ such that $f^{\iterated M} (U') \supset \ifsdomain$. Then one can find, for all
  $i \in I$, closed sets with nonempty interiors $E_i \subset U'$ such that
  $f^{\iterated M}_{\restricted E_i} : E_i \rightarrow f^{\iterated M} (E_i) \subset \ifsrange_i$
  is univalent.
  
  Let $k$ be any sufficiently large positive integer and consider the
  conformal iterated function system $Y_k$ defined on $U$ by the family of
  mappings $f^{- M}_{\restricted E_{i_0}} \composedwith \ph_{i_k}
  \composedwith \ldots . \composedwith \ph_{i_1} \composedwith \ph_{i_0} \composedwith f^{- N}_{\restricted V}$,
  where $(i_{0,} i_1, \ldots ., i_k) \in I^{k + 1}$.
  We claim that the dimension of its limit set is at least
  $\hyperbolicdimension f - \eps$ \footnote{It is indeed the same as the
  original system $\ifsname$.}. Indeed, by using the pressure of the system $X$, we
  can find, for any $t < \hyperbolicdimension f - \eps$ and for any constant
  $C > 0$, a $k_0$ such that for all $k \geq k_0$, we have
  \[ \familysum{i \in I^k}{} \left\| \left( \ph_{i_k} \composedwith \ldots .
     \composedwith \ph_{i_1} \right)' \right\|^t > C. \]
  By a standard distortion argument, one can find $k$ such that the first
  partition function of the system $Y_k$ is greater than a given constant. It
  follows that the pressure of the system $Y_k$ is positive for the exponent
  $t$.
\end{proof}

Lemma \ref{lem:lower exp growth} is a corollary of Lemma \ref{lemma:existence-of-an-iterated-function-system}.

\begin{proof}[Proof of Lemma \ref{lem:lower exp growth}]
  Let $\ifsname = \left(\ifsdomain, \left( \ifsrange_i, \ph_i \right)_{1 \leq i \leq I} \right)$ be
  the iterated function system from Lemma
  \ref{lemma:existence-of-an-iterated-function-system}, with
  $\ph_i = \left(f^{\iterated m_i}_{\restricted \ifsrange_i} \right)^{- 1}$.
  Denote $\topologicalpressure_\ifsname$ its pressure.
  By assumption,
  $\topologicalpressure_\ifsname (t) > 0$.
  Let $\eps > 0$ be such that $\beta_0 = \topologicalpressure_\ifsname (t) - \eps > 0$.
  Then, for all $p$ large enough,
 \begin{equation*}
  \familysum{| \alpha | = p}{} \norm{\ph_{\alpha_1}'}^t \cdot \ldots \cdot \norm{\ph_{\alpha_p}'}^t \geq e^{\beta_0 p}.
 \end{equation*}
  Introduce, for $ i = 1, \dots, I $,
  $\lambda_i = \left\| \ph_i' \right\|^t$ and
  $\Lambda = \familysum{i = 1}{I} \lambda_i$.
  Note that
  \begin{equation}
   \Lambda^p = \familysum{ \abs{ \alpha } = p }{} \lambda_{ \alpha_1 } \dots \lambda_{ \alpha_p }
   = \familysum{ \abs{ \alpha } = p}{} \norm{ \ph_{\alpha_1}' }^t \cdot \ldots . \cdot \norm{ \ph_{\alpha_p}' }^t .
  \end{equation}

  Define $\eps_i = \lambda_i / \Lambda$ and
  for each $p$ choose a finite sequence of natural numbers $k_1 (p), \ldots .,k_I (p)$
  such that
  \begin{equation}
    \eps_i p - 1 \leq k_i (p) < \eps_i p + 1 \label{eq:bounds on the ki}
  \end{equation}
  and
  \[ \familysum{i = 1}{I} k_i (p) = p. \]
  We will show that if
  \begin{equation}
    \nu_p = \familysum{i = 1}{I} k_i (p) m_i, \label{eq:definition of np}
  \end{equation}
  then the properties of the statement are satisfied for any $p$ large.
  
  Let's fix $p$ and write $k_i$ instead of $k_i (p)$. Consider the set $S_p$
  of multi-indexes $\alpha$ of length $p$ such that the number of $0 \leq j \leq p$
  such that $\alpha_j = i$ is precisely $k_i$ for all $1 \leq i \leq I$.
  Then,
  \[ \frac{1}{\Lambda^p} \familysum{\alpha \in S_p}{} \lambda_{\alpha_1} \cdot \ldots . \cdot \lambda_{\alpha_p}
  = \frac{p!}{k_1 ! \cdots k_I !} \cdot \frac{\lambda_1^{k_1} \cdot \ldots \cdot \lambda_I^{k_I}}{\Lambda^p} .
  \]
  Note that for $p \geq 1$ large enough, $k_i > 0$. From Stirling's
  approximation, it follows that for any $p$ large enough,
  \[ \frac{p!}{k_1 ! \cdots k_I !} \geq \left( \frac{1}{2 \pi}
     \right)^{\frac{I - 1}{2}} \cdot \sqrt{\frac{p}{k_1 \cdot \ldots . \cdot
     k_I}} \cdot \frac{p^p}{k_1^{k_1} \cdot \ldots . \cdot k_I^{k_I}} . \]
  From equation {\eqref{eq:bounds on the ki}}, it follows that, for $p$ large,
  \[ k_1 \cdot \ldots . \cdot k_I \leq 2 p^I  \eps_1 \cdot \ldots . \cdot
     \eps_I \]
  and
  \begin{align}
        k_1^{k_1} \cdot \ldots . \cdot k_I^{k_I}
        & \leq \p{ \familyproduct{ i }{} \p{ \eps_i p }^{ k_i } } \familyproduct{ i }{} \p{ 1 + \frac{ 1 }{ \eps_i p }}^{ k_i } \\
        & \leq 
        C_1 p^{ p } \p{
           \eps_1^{k_1} \cdot \ldots . \cdot \eps_I^{k_I} }
       \end{align}
  with some $C_1 > 0$ depending only on $\left( \eps_i \right)_{i = 1, \ldots, I}$.
  Hence we have found a constant $C_2 > 0$ which depends only on
 $(I, \lambda_1, \ldots ., \lambda_I)$ and which is such that
  \[ \frac{p!}{k_1 ! \cdots k_I !} \cdot \frac{\lambda_1^{k_1} \cdot \ldots .
     \cdot \lambda_I^{k_I}}{\Lambda^p} \geq C_2 \cdot  \frac{1}{p^{\delta}}
     \cdot \frac{1}{\Lambda^p} \cdot \familyproduct{i = 1}{I} \left(
     \frac{\lambda_i}{\eps_i} \right)^{k_i} = \frac{C_2}{p^{\delta}}.
  \]
  with $\delta = (3 I - 1) / 2$.
  
  This implies that
  \begin{align}
   \familysum{\alpha \in S_p}{} \left\| \ph_{\alpha_1}' \right\|^t \cdot
     \ldots . \cdot \left\| \ph_{\alpha_p}' \right\|^t
     & = 
     \frac{ 1 }{ \Lambda^p }
     \p{ \familysum{\alpha \in S_p }{ } \lambda_{\alpha_1} \cdot  \ldots \cdot \lambda_{\alpha_p}  }
     \p{ \familysum{| \alpha | = p}{} \norm{ \ph_{\alpha_1}' }^t \cdot \ldots . \cdot \norm{ \ph_{\alpha_p}' }^t } \\
     & \geq
     \frac{C_2}{p^{\delta}} \cdot \familysum{| \alpha | = p}{} \left\|
     \ph_{\alpha_1}' \right\|^t \cdot \ldots . \cdot \left\| \ph_{\alpha_p}'
     \right\|^t \geq \frac{C_2 e^{\beta_0 p}}{p^{\delta}}.
  \end{align} 
   Then, by standard distortion estimates, and
  from the definition {\eqref{eq:definition of np}} of $\nu_p$, there exists
  $C > 0$ such that for $w \in \ifsdomain$,
  \begin{align}
   \familysum{z \in f^{- \nu_p} (w) \setintersection \ifsdomain}{} \abs{ {f^{\iterated \nu_p}}' (z) }^{- t}
   & \geq C  \familysum{\alpha \in S_p}{} \norm{ \ph_{\alpha_1}' }^t \cdot \ldots . \cdot \norm{ \ph_{\alpha_p}' }^t 
    \geq \frac{C'}{\nu_p^{\delta}}
     e^{\beta \nu_p},
  \end{align}
  with $\beta_1 = \beta_0 / (2 a) $  and $a = \familysum{i = 1}{I} \eps_i m_i$.

  For any $ p \geq 1 $,
  choose a bijection $ \chi : \setof{ 1, \dots, I_p } \to S_p $ with $ I_p = \numberofelements S_p $,
  and define, for $ 1 \leq i \leq I_p $,
  $ D_i^p \defeq \ph_{ \chi (i) } (D) $.
  We will have equation \eqref{pressure-explosion-on-the-ifs}
  for $ \beta > 0 $ smaller than $ \beta_1 $ and an appropriate choice of $ C > 0 $.
  Finally, note that $a p - b \leq \nu_p < a p + b$ for $b = \familysum{i = 1}{I} m_i$.
\end{proof}

\end{appendices}

\bibliography{all}

\providecommand{\bysame}{\leavevmode\hbox to3em{\hrulefill}\thinspace}
\providecommand{\MR}{\relax\ifhmode\unskip\space\fi MR }
\providecommand{\MRhref}[2]{%
  \href{http://www.ams.org/mathscinet-getitem?mr=#1}{#2}
}
\providecommand{\href}[2]{#2}
\begin{thebibliography}{PRLS04}

\bibitem[Ahl63]{Ahlfors1963}
Lars~V. Ahlfors, \emph{Quasiconformal reflections}, Acta Math. \textbf{109}
  (1963), 291--301. \MR{0154978}

\bibitem[AL15]{Man-AvilaLyubich2015}
Artur Avila and Mikhail Lyubich, \emph{{L}ebesgue measure of {F}eigenbaum
  {J}ulia sets}, preprint arXiv:1504.02986, 2015.

\bibitem[Bea91]{Book-Beardon1991}
Alan~F. Beardon, \emph{Iteration of rational functions}, Graduate Texts in
  Mathematics, vol. 132, Springer-Verlag, New York, 1991, Complex analytic
  dynamical systems.

\bibitem[BK07]{BaranskiKarpinska2007}
Krzysztof Bara{\'n}ski and Bogus{\l}awa Karpi{\'n}ska, \emph{Coding trees and
  boundaries of attracting basins for some entire maps}, Nonlinearity
  \textbf{20} (2007), no.~2, 391--415. \MR{2290468 (2007m:37105)}

\bibitem[BKZ09]{BaranskiKarpinskaZdunik2009}
Krzysztof Bara{\'n}ski, Bogus{\l}awa Karpi{\'n}ska, and Anna Zdunik,
  \emph{Hyperbolic dimension of {J}ulia sets of meromorphic maps with
  logarithmic tracts}, Int. Math. Res. Not. IMRN (2009), no.~4, 615--624.
  \MR{2480096 (2009k:37109)}

\bibitem[BKZ12]{BaranskiKarpinskaZdunik2012}
\bysame, \emph{Bowen's formula for meromorphic functions}, Ergodic Theory
  Dynam. Systems \textbf{32} (2012), no.~4, 1165--1189. \MR{2955309}

\bibitem[Bro65]{Brolin1965}
Hans Brolin, \emph{Invariant sets under iteration of rational functions}, Ark.
  Mat. \textbf{6} (1965), 103--144 (1965). \MR{0194595 (33 \#2805)}

\bibitem[DU91]{DenkerUrbanski1991d}
Manfred Denker and Mariusz Urba{\'n}ski, \emph{On {S}ullivan's conformal
  measures for rational maps of the {R}iemann sphere}, Nonlinearity \textbf{4}
  (1991), no.~2, 365--384. \MR{1107011 (92f:58097)}

\bibitem[EL92]{EremenkoLyubich1992}
Alexandre~{\`E}. Eremenko and Mikail~Yu Lyubich, \emph{Dynamical properties of
  some classes of entire functions}, Ann. Inst. Fourier (Grenoble) \textbf{42}
  (1992), no.~4, 989--1020. \MR{1196102 (93k:30034)}

\bibitem[ER15]{EpsteinRempeGillen2015}
Adam Epstein and Lasse Rempe{-Gillen}, \emph{{On invariance of order and the
  area property for finite-type entire functions.}}, {Ann. Acad. Sci. Fenn.,
  Math.} \textbf{40} (2015), no.~2, 573--599 (English).

\bibitem[Ere89]{Eremenko1989}
Alexandre~{\`E}. Eremenko, \emph{On the iteration of entire functions},
  Dynamical systems and ergodic theory ({W}arsaw, 1986), Banach Center Publ.,
  vol.~23, PWN, Warsaw, 1989, pp.~339--345. \MR{1102727 (92c:30027)}

\bibitem[ES90]{EremenkoSodin1990}
A.~{\`E}. Eremenko and M.~L. Sodin, \emph{Iterations of rational functions and
  the distribution of the values of {P}oincar\'e functions}, Teor. Funktsi\u\i\
  Funktsional. Anal. i Prilozhen. (1990), no.~53, 18--25. \MR{1077218
  (92d:30016)}

\bibitem[GM05]{GarnettMarshall2005}
John~B. Garnett and Donald~E. Marshall, \emph{Harmonic measure}, New
  Mathematical Monographs, vol.~2, Cambridge University Press, Cambridge, 2005.

\bibitem[GV73]{GehringVaisala1973}
F.~W. Gehring and J.~V{\"a}is{\"a}l{\"a}, \emph{Hausdorff dimension and
  quasiconformal mappings}, J. London Math. Soc. (2) \textbf{6} (1973),
  504--512. \MR{0324028 (48 \#2380)}

\bibitem[KU08]{KotusUrbanski2008}
Janina Kotus and Mariusz Urba{\'n}ski, \emph{Fractal measures and ergodic
  theory of transcendental meromorphic functions}, Transcendental dynamics and
  complex analysis, London Math. Soc. Lecture Note Ser., vol. 348, Cambridge
  Univ. Press, Cambridge, 2008, pp.~251--316.

\bibitem[Lyu86]{Lyubich1986}
M.~Yu. Lyubich, \emph{Generic behavior of trajectories of the exponential
  function}, Uspekhi Mat. Nauk \textbf{41} (1986), no.~2(248), 199--200.

\bibitem[Lyu87]{Lyubich1987}
\bysame, \emph{The measurable dynamics of the exponential}, Sibirsk. Mat. Zh.
  \textbf{28} (1987), no.~5, 111--127. \MR{924986 (89d:58071)}

\bibitem[Lyu00]{Lyubich2000}
Mikhail Lyubich, \emph{The quadratic family as a qualitatively solvable model
  of chaos}, Notices Amer. Math. Soc. \textbf{47} (2000), no.~9, 1042--1052.

\bibitem[Mak85]{Makarov1985}
N.~G. Makarov, \emph{On the distortion of boundary sets under conformal
  mappings}, Proc. London Math. Soc. (3) \textbf{51} (1985), no.~2, 369--384.

\bibitem[May17]{Mayer17}
Volker Mayer, \emph{A lower bound of the hyperbolic dimension for meromorphic
  functions having a logarithmic hölder tract}, Preprint arXiv:1709.02188,
  2017.

\bibitem[MBP12]{MihaljevicBrandtPeter2012}
Helena Mihaljevi{\'c}-Brandt and J{\"o}rn Peter, \emph{Poincar\'e functions
  with spiders' webs}, Proc. Amer. Math. Soc. \textbf{140} (2012), no.~9,
  3193--3205. \MR{2917092}

\bibitem[McM87]{McMullen1987}
Curtis~T. McMullen, \emph{Area and {H}ausdorff dimension of {J}ulia sets of
  entire functions}, Trans. Amer. Math. Soc. \textbf{300} (1987), no.~1,
  329--342. \MR{871679 (88a:30057)}

\bibitem[McM00]{McMullen2000}
\bysame, \emph{Hausdorff dimension and conformal dynamics. {II}.
  {G}eometrically finite rational maps}, Comment. Math. Helv. \textbf{75}
  (2000), no.~4, 535--593.

\bibitem[MU08]{MayerUrbanski2008}
Volker Mayer and Mariusz Urba{\'n}ski, \emph{Geometric thermodynamic formalism
  and real analyticity for meromorphic functions of finite order}, Ergodic
  Theory and Dynamical Systems \textbf{28} (2008), 915--946.

\bibitem[MU10]{MayerUrbanski2010}
\bysame, \emph{Thermodynamical formalism and multifractal analysis for
  meromorphic functions of finite order}, Mem. Amer. Math. Soc. \textbf{203}
  (2010), no.~954, vi+107. \MR{2590263}

\bibitem[MU19]{MayerUrbanski2019}
Volker Mayer and Mariusz Urbański, \emph{Thermodynamic formalism and integral
  means spectrum of asymptotic tracts for transcendental entire functions},
  Preprint arXiv:1709.05166, 2019.

\bibitem[MZ19]{MayerZdunik2019}
Volker Mayer and Anna Zdunik, \emph{The failure of ruelle's property for entire
  functions}, 2019.

\bibitem[PRL07]{PrzytyckiRiveraLetelier2007}
Feliks Przytycki and Juan Rivera-Letelier, \emph{Statistical properties of
  topological {C}ollet-{E}ckmann maps}, Ann. Sci. \'Ecole Norm. Sup. (4)
  \textbf{40} (2007), no.~1, 135--178. \MR{2332354 (2008j:37093)}

\bibitem[PRLS03]{PrzytyckiRiveraLetelierSmirnov2003}
Feliks Przytycki, Juan Rivera-Letelier, and Stanislav Smirnov,
  \emph{Equivalence and topological invariance of conditions for non-uniform
  hyperbolicity in the iteration of rational maps}, Inventiones mathematicae
  \textbf{151} (2003), no.~1, 29--63 (English).

\bibitem[PRLS04]{Przytycki2004}
Feliks Przytycki, Juan Rivera-Letelier, and Stanislav Smirnov, \emph{Equality
  of pressures for rational functions}, Ergod. Th. Dynam. Sys. \textbf{24}
  (2004), no.~3, 891--914.

\bibitem[Prz93]{Przytycki1993}
Feliks Przytycki, \emph{Lyapunov characteristic exponents are nonnegative},
  Proceedings of the American Mathematical Society \textbf{119} (1993), no.~1,
  pp. 309--317 (English).

\bibitem[Prz98]{Przytycki1998}
\bysame, \emph{Iterations of holomorphic {C}ollet-{E}ckmann maps: conformal and
  invariant measures. {A}ppendix: on non-renormalizable quadratic polynomials},
  Trans. Amer. Math. Soc. \textbf{350} (1998), no.~2, 717--742. \MR{1407501
  (98d:58155)}

\bibitem[Prz99]{Przytycki1999}
\bysame, \emph{Conical limit set and {P}oincar\'e exponent for iterations of
  rational functions}, Trans. Amer. Math. Soc. \textbf{351} (1999), no.~5,
  2081--2099. \MR{1615954 (99h:58110)}

\bibitem[PU10]{PrzytyckiUrbanski2010}
Feliks Przytycki and Mariusz Urbanski, \emph{Conformal fractals: Ergodic theory
  methods}, 1st ed., Cambridge University Press, New York, NY, USA, 2010.

\bibitem[Ree86]{Rees1986}
Mary Rees, \emph{The exponential map is not recurrent}, Mathematische
  Zeitschrift \textbf{191} (1986), no.~4, 593--598 (English).

\bibitem[Rem09a]{Rempe2009a}
Lasse Rempe, \emph{Hyperbolic dimension and radial {J}ulia sets of
  transcendental functions}, Proc. Amer. Math. Soc. \textbf{137} (2009), no.~4,
  1411--1420. \MR{2465667 (2010h:37100)}

\bibitem[Rem09b]{Rempe2009}
\bysame, \emph{Rigidity of escaping dynamics for transcendental entire
  functions}, Acta Math. \textbf{203} (2009), no.~2, 235--267. \MR{2570071
  (2011b:37084)}

\bibitem[Rem14]{RempeGillen2014}
Lasse Rempe{-Gillen}, \emph{Hyperbolic entire functions with full hyperbolic
  dimension and approximation by {E}remenko-{L}yubich functions}, Proceedings
  of the London Mathematical Society \textbf{108} (2014), no.~5, 1193--1225.

\bibitem[RG16]{RempeGillen2018}
Lasse Rempe-Gillen, \emph{Arc-like continua, julia sets of entire functions,
  and eremenko's conjecture}, 2016.

\bibitem[RS10]{RempeStallard2010}
Lasse Rempe and Gwyneth~M. Stallard, \emph{Hausdorff dimensions of escaping
  sets of transcendental entire functions}, Proc. Amer. Math. Soc. \textbf{138}
  (2010), no.~5, 1657--1665. \MR{2587450 (2011a:37097)}

\bibitem[RS17]{Rempe-GillenSixsmith2017}
Lasse Rempe{-Gillen} and Dave Sixsmith, \emph{Hyperbolic entire functions and
  the {E}remenko-{L}yubich class: class {$ {\mathcal B} $} or not class {$
  {\mathcal B} $}?}, Math. Z. \textbf{286} (2017), no.~3-4, 783--800.
  \MR{3671560}

\bibitem[Rue82]{ruelle1982}
David Ruelle, \emph{Repellers for real analytic maps}, Ergod. Theor. Dynam.
  Syst. \textbf{2} (1982), 99--107.

\bibitem[Shi98]{Shishikura1998}
Mitsuhiro Shishikura, \emph{The hausdorff dimension of the boundary of the
  mandelbrot set and julia sets}, Annals of Mathematics \textbf{147} (1998),
  no.~2, pp. 225--267.

\bibitem[Six14]{sixsmithclassB}
D.~J. Sixsmith, \emph{A new characterisation of the {E}remenko-{L}yubich
  class}, J. Anal. Math. \textbf{123} (2014), 95--105. \MR{3233575}

\bibitem[Sta99]{Stallard1999}
Gwyneth~M. Stallard, \emph{The {H}ausdorff dimension of {J}ulia sets of
  hyperbolic meromorphic functions}, Math. Proc. Cambridge Philos. Soc.
  \textbf{127} (1999), no.~2, 271--288.

\bibitem[Sul82]{Sullivan1982}
Dennis Sullivan, \emph{Discrete conformal groups and measurable dynamics},
  Bull. Amer. Math. Soc. (N.S.) \textbf{6} (1982), no.~1, 57--73. \MR{634434
  (83c:58066)}

\bibitem[Sul83]{Sullivan1983}
\bysame, \emph{Conformal dynamical systems}, Geometric Dynamics (Jr. Palis, J.,
  ed.), Lecture Notes in Mathematics, vol. 1007, Springer Berlin Heidelberg,
  1983, pp.~725--752 (English).

\bibitem[UZ03]{UrbanskiZdunik2003}
Mariusz Urba{\'n}ski and Anna Zdunik, \emph{The finer geometry and dynamics of
  the hyperbolic exponential family}, Michigan Math. J. \textbf{51} (2003),
  no.~2, 227--250. \MR{1992945 (2004d:37068)}

\bibitem[Zdu90]{Zdunik1990}
Anna Zdunik, \emph{Parabolic orbifolds and the dimension of the maximal measure
  for rational maps}, Invent. Math. \textbf{99} (1990), no.~3, 627--649.
  \MR{1032883 (90m:58120)}

\end{thebibliography}
\bibliographystyle{amsalpha}

\end{document}